\documentclass[12pt]{amsart}

\usepackage{geometry}
\usepackage{setspace}
\usepackage{hyperref}
\usepackage{amssymb}
\usepackage{amsmath}
\usepackage{amsthm}
\usepackage{stmaryrd}
\usepackage{graphicx}
\usepackage{mathrsfs}
\usepackage[noabbrev,capitalise ]{cleveref}
\usepackage{adjustbox}
\usepackage{comment}

\usepackage{mathtools}

\usepackage{relsize}

\usepackage{imakeidx}
\makeindex
\usepackage[]{algorithm}
\usepackage[noend]{algpseudocode}

\usepackage{xcolor}

\definecolor{ygreen}{rgb}{0.20, 0.46, 0.11}
\definecolor{newgreen}{rgb}{0.13, 0.4, 0.11}
\definecolor{dblue}{rgb}{0.13, 0.13, 0.5}
\definecolor{dmaroon}{rgb}{0.55, 0.08, 0.24}
\definecolor{rorange}{rgb}{0.64, 0.06, 0.04}
\definecolor{pblue}{rgb}{0.28, 0.1, 0.82}
\definecolor{cblue}{rgb}{0.2,0.17,0.85}
\definecolor{ao}{rgb}{0.08, 0.5, 0.04}

\hypersetup{
    colorlinks=true,
    linkcolor=cblue,
    filecolor=magenta,      
    urlcolor=cblue,
    citecolor=cblue,
}
\usepackage[shortlabels]{enumitem}

\usepackage{quiver}

\input xy
\xyoption{all}
\usepackage[color,matrix,arrow]{xy}
\usepackage{tikz}
\usepackage{tikz-cd}
\usetikzlibrary{bending}
\usetikzlibrary{arrows.meta}
\tikzcdset{arrow style=tikz, diagrams={>=stealth}}
\usetikzlibrary{matrix,arrows,decorations.markings, calc, positioning}
\usetikzlibrary {shapes.geometric}
\usetikzlibrary {intersections}

\let\O=\relax

\let\epsilon=\relax

\newcommand{\epsilon}{\varepsilon}

\newcommand{\O}{\mathcal{O}}

\newcommand{\cO}{\O}

\theoremstyle{plain}
\newtheorem{theorem}{Theorem}[section]
\newtheorem{lemma}[theorem]{Lemma}
\newtheorem{proposition}[theorem]{Proposition}
\newtheorem{corollary}[theorem]{Corollary}

\newtheorem*{claim*}{Claim}

\theoremstyle{definition}
\newtheorem{definition}[theorem]{Definition}
\newtheorem*{definition*}{Definition}
\newtheorem*{lemma*}{Lemma}
\newtheorem{conjecture}[theorem]{Conjecture}
\newtheorem*{conjecture*}{Conjecture}
\newtheorem{example}[theorem]{Example}
\newtheorem{remark}[theorem]{Remark}

\newtheorem{notation}[theorem]{Notation}

\newcommand{\defn}[1]{\textsl{#1}} 

\DeclareMathOperator{\Tu}{Tu}
\DeclareMathOperator{\Sub}{Sub}
\DeclareMathOperator{\sub}{\Sub}
\DeclareMathOperator{\tr}{\Tr}
\DeclareMathOperator{\Tr}{Tr}
\DeclareMathOperator{\Res}{Res}
\DeclareMathOperator{\Conj}{Conj}
\DeclareMathOperator{\Comp}{Comp}

\DeclareMathOperator{\Refl}{Refl}
\DeclareMathOperator{\hull}{Hull}

\newcommand{\onto}{\twoheadrightarrow}

\newcommand{\Oa}{\O_a}
\newcommand{\M}{\mathrm{M}}
\newcommand{\Om}{\O_m}
\newcommand{\mc}[1]{\M({#1})}

\newcommand{\res}{{\leq}}
\newcommand{\resSuccess}{{\res^S}}
\newcommand{\resFailure}{{\res^F}}
\newcommand{\cover}{{\prec}}
\newcommand{\coverSuccess}{{\cover^S}}
\newcommand{\coverFailure}{{\cover^F}}

\newcommand{\NO}{N_\O}

\let\oldtocsection=\tocsection
\renewcommand{\tocsection}[2]{\oldtocsection{#1}{#2}}
\let\oldtocsubsection=\tocsubsection
\renewcommand{\tocsubsection}[2]{\quad\oldtocsubsection{#1}{#2}}

\usepackage{tkz-euclide}
\usepackage{xstring}

\def\splicelist#1{
\StrCount{#1}{,}[\numofelem]
\ifnum\numofelem>0\relax
     \StrBehind[\numofelem]{#1}{,}[\mylast]%
\else
    \let\mylast#1%
\fi
}
\newcommand{\myroundpoly}[3][thin,color=black]{
\splicelist{#2}
\foreach \vertex [remember=\vertex as \succvertex
    (initially \mylast)] in {#2}{
    \coordinate (\succvertex-next) at ($(\succvertex)!#3!90:(\vertex)$);
    \coordinate (\vertex-previous) at ($(\vertex)!#3!-90:(\succvertex)$);
    \draw[#1] (\succvertex-next) --  (\vertex-previous);
}
\foreach \vertex in {#2}{
    \tkzDrawArc[#1](\vertex,\vertex-next)(\vertex-previous)
}
}

\newcommand{\nocontentsline}[3]{}
\let\origcontentsline\addcontentsline
\newcommand\stoptoc{\let\addcontentsline\nocontentsline}
\newcommand\resumetoc{\let\addcontentsline\origcontentsline}

\title{Maximal compatibility of disklike $G$-transfer systems}

\begin{document}

\author[D.\ DeMark]{David DeMark}
\address{Department of Mathematics, Earlham College, Richmond, IN, U.S.A.}
\email{\href{mailto:demarda@earlham.edu}{demarda@earlham.edu}}
\urladdr{\url{https://demark.website}}

\author[M.\ A.\ Hill]{Michael A.~Hill}
\address{ School of Mathematics, University of Minnesota, Minneapolis, MN, U.S.A.}
\email{\href{mailto:mahill@umn.edu}{mahill@umn.edu}}

\author[Y.\ Kamel]{Yigal Kamel}
\address{ Department of Mathematics, University of Illinois at Urbana-Champaign, Urbana, IL, U.S.A.}
\email{\href{mailto:ykamel2@illinois.edu}{ykamel2@illinois.edu}}
\urladdr{\url{https://yigalkamel.web.illinois.edu}}

\author[N.\ Niu]{Nelson Niu}
\address{ Department of Mathematics, University of Washington, Seattle, WA, U.S.A.}
\email{\href{mailto:nsniu@uw.edu}{nsniu@uw.edu}}
\urladdr{\url{https://nelsonniu.com}}

\author[K.\ Stoeckl]{Kurt Stoeckl}
\address{ School of Mathematics and Statistics, The University of Melbourne, Melbourne, Victoria, Australia}
\email{\href{mailto:kstoeckl@student.unimelb.edu.au}{kstoeckl@student.unimelb.edu.au}}
\urladdr{\url{https://kstoeckl.github.io}}

\author[D.\ Van Niel]{Danika Van Niel}
\address{ Department of Mathematics \& Statistics, Binghamton University, Binghamton, NY, U.S.A.}
\email{\href{mailto:dvanniel@binghamton.edu}{dvanniel@binghamton.edu}}

\author[G.\ Yan]{Guoqi Yan}
\address{ Shanghai Center for Mathematical Sciences, Fudan University, Shanghai, China}
\email{\href{mailto:yanguoqi1993@outlook.com}{yanguoqi1993@outlook.com}}
\urladdr{\url{https://sites.google.com/view/guoqiyan}}

\begin{abstract}
Transfer systems are a combinatorial model for $N_{\infty}$-operads, which encode commutative structures in equivariant homotopy theory. Blumberg--Hill and Chan gave criteria for when two transfer systems are a compatible pair, meaning they encode the additive transfers and multiplicative norms of a ring-type structure.   In this paper, given a transfer system encoding an additive structure, we give explicit formulae for the maximal transfer system it is compatible with. Our formulae simplify for disklike transfer systems, which typically encode additive structures. Further, we prove that  (maximal) compatibility is functorial with respect to the inflation map induced by a quotient of groups, letting us compute maximal compatible transfer systems as inflations of connected transfer systems.
\end{abstract}

\maketitle

\centerline{
\hspace{1.0cm}
\begin{tikzcd}[ampersand replacement = \&, column sep = 7.5mm, row sep = 5.9mm,
every matrix/.append style = {name=m},
remember picture,nodes={scale=0.725}]
\&\&\& \bullet \\
\&\&\bullet \&\& \bullet\\
\&\bullet \&\& \bullet\&\& \bullet\&\&\\ 
{\bullet} \&\&\bullet \&\& \bullet\&\& {\bullet}\\ 
\&{\bullet} \&\& \bullet\&\& {\bullet}\&\&\\ 
\&\&{\bullet} \&\& {\bullet}\\
\&\&\& {\bullet}
\arrow[from=2-3, to=1-4, outer sep=-.5, overlay]
\arrow[from=4-5, to=1-4, bend right=10, overlay]
\arrow[from=3-4, to=2-5, outer sep=-.5, overlay]
\arrow[from=4-5, to=2-3, bend left, overlay]
\arrow[from=4-3, to=2-5, bend right, overlay]
\arrow[from=5-4, to=2-3, bend left=20, overlay]
\arrow[from=5-4, to=1-4, bend left, overlay]
\arrow[from=5-4, to=2-5, bend right=18, overlay]
\arrow[from=5-4, to=3-2, curve={height=-12pt}, overlay]
\arrow[from=4-5, to=2-5, bend right=20, overlay]
\arrow[from=3-2, to=2-3, very thick, crossing over, cblue, overlay]
\arrow[from=4-3, to=3-4, very thick, crossing over, cblue, overlay]
\arrow[from=5-4, to=3-6, curve={height=12pt}, very thick, crossing over, cblue, overlay]
\arrow[from=5-4, to=4-3, very thick, outer sep=-1.5, cblue, overlay]
\arrow[from=5-4, to=4-5, very thick, outer sep=-.5, cblue, overlay]
\arrow[from=5-4, to=3-4, very thick, crossing over, pos=.35, cblue, overlay]
\arrow[from=4-5, to=3-4, very thick, crossing over, cblue, overlay]
\arrow[from=5-6, to=4-7, very thick, cblue, overlay]
\arrow[curve={height=-12pt}, from=6-3, to=4-1, overlay]
\arrow[from=6-3, to=5-2, very thick, cblue, overlay]
\arrow[curve={height=12pt}, from=6-5, to=4-7, very thick, cblue, overlay]
\arrow[from=6-5, to=5-6, very thick, cblue, overlay]
\arrow[from=3-2, to=1-4, curve={height=-12pt}, overlay]
\arrow[from=4-5, to=3-6, very thick, cblue]
\end{tikzcd}
}
\begin{tikzpicture}[
remember picture, overlay]
\myroundpoly[ao, very thick, dashed]{m-1-4,m-3-6,m-5-4,m-3-2}{0.5cm};
\end{tikzpicture}

\vspace*{-0.7cm}

\section{Introduction}
Ring spectra are fundamental in algebraic topology, and highly structure ring spectra are central to the field. Here we consider associative or commutative ring spectra where the associativity and commutativity are encoded by specifying coherent homotopies witnessing these properties. In the commutative case, the multiplication is structured by an \(E_\infty\) operad, and May showed that all such \(E_\infty\) operads are equivalent \cite{mayEinfty}.

There is a similar story in the equivariant setting over a finite group $G$, with some subtleties. Here we can intertwine the action of \(G\) with the permutation action on the coordinates. Classically, this is no new structure: the defining feature of ``commutativity'' is that the order does not matter. In the homotopical context, since commutativity is extra structure encoded by explicit homotopies, we are no longer guaranteed to be able to do this. Unpacking these ideas and codifying these led to the notion of an \(N_\infty\) operad by Blumberg and the second author \cite{BHOperads}. 

The combined work of many authors, 
\cite{BHOperads, BalchinBarnesRoitzheim21Noperads, BonventrePereira21, GutierrezWhite18, Rubin21_combNinfty,  RubinDetectingOperads}, show that the \(\infty\)-category of $G$-$N_{\infty}$ operads is equivalent to a poset, the category of $G$-transfer systems. This allows us to study these complex $N_\infty$ operads using tangible combinatorial tools.

When viewed additively, the \(N_\infty\) operads structure connective \(G\)-spectra with certain transfers prescribed by the operad. When viewed multiplicatively, they parameterize ring spectra with certain norm maps, the algebraic shadow of which is the corresponding incomplete Tambara functors. Allowing both to vary introduces compatibility conditions arising from \(G\)-twisted versions of the distributive law of multiplication over addition, described by Blumberg and the second author and refined by Chan to transfer systems \cite{BiIncomplete, ChanTambara}. Moreover, for any fixed additive transfer system, there is a maximal compatible multiplicative transfer system. 

In this paper, we characterize compatible pairs $(\Oa,\Om)$ of transfer systems by devising formulae for the maximal compatible transfer system $\mc{\Oa}$ for any $G$-transfer system $\Oa$.
This characterizes all compatible pairs, as Blumberg--Hill observe that a pair of $G$-transfer systems $(\Oa,\O_m)$ are compatible, if, and only if, $\Om$ is contained by the unique maximal compatible transfer system $\mc{\Oa}$ \cite[Remark 7.86]{BiIncomplete}, see \Cref{prop:Maximal compatible characterises all compatibility}.
Our formulae simplify for \emph{disklike} transfer systems.
\begin{theorem}
[Theorems \ref{thm:r' can be assumed to be in M(O) 2} and \ref{thm:maximal compatible disk-like via cover relations}]\label{thm:main}
Let $\O$ be a $G$-transfer system.
The maximal compatible transfer system $\mc{\O}$ admits the following recursive formula
{
\begin{align*}
    \mc{\O} = \{ e \in \O \, | \, \forall r<e, r\in M(\mathcal O) \text{ and } r<^S e\},
\end{align*}
}where $r<e$ means, that in the poset of restrictions in $\O$, the transfer $e$ restricts into $r$, and $r<^S e$ means that this restriction satisfies the compatibility condition (\cref{def:compatibility successes and failures}).
Furthermore, if $\O$ is disklike, then we only need to check the cover relations $r\prec e$, yielding
{
\begin{align*}
    \mc{\O} = \{ e \in \O \, | \, \forall r\prec e, r \in \mc{\O} \text{ and $r\coverSuccess e$}\}.
\end{align*}}
\end{theorem}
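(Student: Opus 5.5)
The plan is to work directly from the compatibility criterion of Blumberg--Hill and Chan, as packaged in \cref{def:compatibility successes and failures}. A pair $(\O,\Om)$ is compatible exactly when, for every $e\in\Om$ and every restriction $r<e$ in $\O$, the restriction $r$ lies in $\Om$ and is a success $r<^S e$; the condition $r\in\Om$ is what restriction-closure of $\Om$ inside $\O$ forces.

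\textbf{First formula.} Write $\mathcal{N}$ for the right-hand side of the first display. I will prove $\mc{\O}=\mathcal{N}$ by two inclusions.
\begin{enumerate}
\item For $\mc{\O}\subseteq\mathcal{N}$: take $e\in\mc{\O}$. Since $(\O,\mc{\O})$ is compatible (\cref{prop:Maximal compatible characterises all compatibility}), every restriction $r<e$ lies in $\mc{\O}$ and is a success. Hence $e\in\mathcal{N}$.
\item For $\mathcal{N}\subseteq\mc{\O}$: show that $\mathcal{N}\cup\mc{\O}$ generates a transfer system compatible with $\O$, and then maximality gives the inclusion. In fact every $e\in\mathcal{N}$ already satisfies the compatibility condition relative to the set $\mc{\O}\cup\{e\}$. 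So the real work is checking that the transfer system generated by $\mc{\O}\cup\{e\}$ stays compatible.
\item For that check I will use the explicit description of generated transfer systems: close under conjugation and restriction, then under composition. I will show successes are stable under each of these three operations.
\begin{itemize}
\item Conjugation: immediate.
\item Restriction: a restriction of a restriction is again a restriction in the poset.
\item Composition: this is the delicate case. If $K\to H$ and $H\to L$ both have only successful restrictions, I must show every restriction of the composite $K\to L$ factors as restrictions of the two pieces, and that successes compose.
\end{itemize}
\end{enumerate}
I expect the composition case to be the main obstacle. I would first look for a lemma earlier in the paper stating that $<^S$ is transitive and compatible with composition. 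Failing that, I would unwind the double-coset definition of success directly.

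\textbf{Disklike formula.} Denote the right-hand side of the second display by $\mathcal{N}'$.
\begin{enumerate}
\item The containment $\mc{\O}\subseteq\mathcal{N}'$ follows from the first formula, since cover relations are restrictions.
\item For the converse, take $e\in\mathcal{N}'$ and any restriction $r<e$. Choose a saturated chain $r=r_0\prec r_1\prec\cdots\prec r_n=e$ in the poset of restrictions, and induct on $n$.
\begin{itemize}
\item By hypothesis $r_{n-1}\in\mc{\O}$. By the first formula applied to $r_{n-1}$, every restriction of $r_{n-1}$, in particular $r$, is in $\mc{\O}$ and is a success below $r_{n-1}$.
\item It remains to show $r<^S e$ from $r<^S r_{n-1}$ and $r_{n-1}\prec^S e$, i.e.\ transitivity of success along chains.
\end{itemize}
\end{enumerate}
Here disklikeness enters. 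I expect it to guarantee that the restriction poset under $e$ is well behaved: intervals are lattices/disks, so restrictions compose along chains and the success condition, phrased in terms of the relevant subgroups being in $\O$, propagates. Transitivity may fail for general $\O$, because a restriction can be a success by way of two covers whose data do not combine. Proving this transitivity from the disklike hypothesis is the step I expect to be hardest. My first attempt would be to show that in a disklike $\O$, success of $r<e$ is witnessed by the existence of a single transfer in $\O$, so that witnesses can be composed using closure of $\O$ under composition.
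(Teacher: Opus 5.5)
Your argument for the first formula is sound and follows the paper's route (\cref{prop:set expression of maximal compatible pair} with the lemmas on conjugates, composites and restrictions of generators). Every proper restriction of $e\in\mathcal N$ lies in $\mc{\O}$, so $\Res(\mc{\O}\cup\{e\})=\mc{\O}\cup\{e\}$. Each edge of this set has only successful restrictions, and conjugation and composition preserve that property.

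The composition step you flag is a two-line chase. For $I\xrightarrow{u}K\xrightarrow{v}H$ and $J\leq H$ with $I\cap J\to I\in\O$, the condition for $u$ at $K\cap J$ gives $K\cap J\to K\in\O$. The condition for $v$ at $J$ then gives $J\to H\in\O$. Your restriction bullet works only because of the hypothesis $r\in\mc{\O}$; in general, having only successful restrictions is not inherited by restrictions (cf.\ \cref{ex:non disk-like counter example to good conjecture}).

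The disklike part has a genuine gap. You reduce to the claim that $r<^S r_{n-1}$, $r_{n-1}\coverSuccess e$ and $r_{n-1}\in\mc{\O}$ together imply $r<^S e$, and this is false even for disklike $\O$. Write $e\colon K\to H$, with $r_{n-1}$ and $r$ its restrictions to $Y$ and $J\leq Y$. The chase $K\cap J\to K$, then $J\to Y$, then $Y\to H$ needs $K\cap Y\to K\in\O$. It breaks at a saturation failure ($K\cap J\to K\in\O$ but $K\cap Y\to K\notin\O$), and disklike systems can have these.

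Here is a counterexample. Let $G=C_{abc}$ with $a,b,c$ distinct primes, and let $\O$ be generated by $C_{ab}\to G$, $C_{bc}\to G$ and $1\to G$. Its non-reflexive transfers are $1\to X$ for all $X\neq 1$, together with $C_a\to C_{ac}$, $C_c\to C_{ac}$, $C_b\to C_{ab}$, $C_b\to C_{bc}$ and $C_b,C_{ab},C_{bc}\to G$. Take $e\colon C_{ab}\to G$, $s=e|_{C_{ac}}\colon C_a\to C_{ac}$ and $r=e|_{C_c}\colon 1\to C_c$.
\begin{itemize}
\item $s\prec e$, since no subgroup lies strictly between $C_{ac}$ and $G$.
\item $s\coverSuccess e$ holds vacuously, since $C_a\to C_{ab}\notin\O$.
\item $r<^S s$, since $C_c\to C_{ac}\in\O$, and $s\in\mc{\O}$ by the first formula.
\item Yet $r<^F e$, because $1\to C_{ab}\in\O$ while $C_c\to G\notin\O$.
\end{itemize}
The failure is visible only through the other cover $t=e|_{C_{bc}}\colon C_b\to C_{bc}$. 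There $r\coverFailure t$, because $1\to C_b\in\O$ and $C_c\to C_{bc}\notin\O$, so $t\notin\mc{\O}$ and hence $e\notin\mathcal N'$.

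So a correct proof must use the hypothesis on every cover of $e$, not transitivity along a fixed chain. This is what the paper does:
\begin{itemize}
\item It argues on complements, using \cref{lem: induction so don't need notin} to obtain $r<^F e$ with $r\in\mc{\O}$.
\item Suppose the first step $r\prec r_1$ of a chain is a success, so $f\colon J\to Y$ lies in $\O$. The paper takes disklike generators $H_e\to G$ and $H_f\to G$ restricting to $e$ and $f$. It shows that the proper restriction $q\colon H\cap H_e\cap H_f\to H\cap H_f$ of $e$ still has $r<^F q$.
\item Iterating this descent produces a covered failure $r\coverFailure s'\leq e$, which forces $e\notin\mathcal N'$.
\end{itemize}
In the example above, $H_e=C_{ab}$, $H_f=C_{bc}$ and $q=t$.
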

We also apply
\cref{thm:main} to study the effect of inflation on (maximal) compatible transfer systems.
A surjection of groups $p:G\onto G/N$, where $N\trianglelefteq G$ and $p$ is the projection, induces the inflation functor on transfer systems $p^*:\Tr(G/N)\to \Tr(G)$ \cite{RubinOperadicLifts} (\cref{def:inflation}).
We show that inflation preserves all features relevant to (maximal) compatibility. 
\begin{theorem}
\label{thm:inflation}
    Let $N\trianglelefteq G$ be a normal subgroup. For $p:G\onto G/N$ the projection and $\O,\O_m$ a pair of $G/N$-transfer systems, the following statements are true.
    \begin{enumerate}[\emph{(\arabic*)}]
        \item \emph{(\cref{prop: inflation preserves disklike})} If $\O$ is disklike, then so is $p^*\O$.
        \item \emph{(\cref{prop: inflation preserves saturation})} If $\O$ is saturated, then so is $p^*\O$.
        \item \emph{(\cref{prop: inflation preserves compatibility})} If $(\O,\O_m)$ is compatible, then so is $(p^*\O,p^*\O_m)$.
        \item \emph{(\cref{prop: inflation preserves max compat})} If $(\O,\O_m)$ is maximally compatible, then so is $(p^*\O,p^*\O_m)$.
    \end{enumerate} 
\end{theorem}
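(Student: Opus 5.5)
The plan is to reduce all four parts to a single ``dictionary'' lemma. It should say that inflation identifies the combinatorics of $p^*\O$ with that of $\O$, arrow by arrow. I take \cref{def:inflation} in the form: $K\to H$ lies in $p^*\O$ if and only if $H\cap N\le K$ and $p(K)\to p(H)$ lies in $\O$.

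The first step is the key observation. If $H\cap N\le K\le H$ and $L\le H$, then
\[
p(K\cap L)=p(K)\cap p(L).
\]
Indeed, if $kN=lN$ with $k\in K$ and $l\in L$, then $k^{-1}l\in H\cap N\le K$, so $l\in K\cap L$. Also $L\cap N\le K\cap L$. From this I would prove the following.
\begin{enumerate}[(a)]
\item $p^*\O$ is a transfer system. This is presumably already in \cite{RubinOperadicLifts}, but the same computation is reused below.
\item For an arrow $e=(K\to H)$ of $p^*\O$, the assignment $r=(K\cap L\to L)\mapsto p(r)=(p(K)\cap p(L)\to p(L))$ is a surjective, order-preserving and order-reflecting map from the poset of restrictions of $e$ to the poset of restrictions of $p(e)$. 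Surjectivity follows by taking preimages $L=p^{-1}(\bar L)\cap H$. Reflection holds up to the identification of restrictions with equal image.
\item Consequently cover relations $r\prec e$ in $p^*\O$ map to cover relations, or to equalities, in $\O$. Every cover of $p(e)$ lifts to a cover of $e$.
\item A restriction $r<e$ in $p^*\O$ is a success in the sense of \cref{def:compatibility successes and failures} exactly when $p(r)<p(e)$ is a success in $\O$.
\end{enumerate}
Item (d) should follow because the success condition is phrased via intersections and membership in the additive transfer system. Both are transported by $p$ under the hypothesis $H\cap N\le K$, and conjugation commutes with $p$.

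With the dictionary in hand, parts (1) and (2) are formal. Disklikeness and saturation are conditions on restriction posets, covers, and two-out-of-three type closure within intervals $[K,H]$. Item (b), together with the lattice isomorphism $[K,H]\cong[p(K),p(H)]$ valid when $H\cap N\le K$, transports each condition from $\O$ to $p^*\O$. For part (3), I use Chan's criterion as recorded in \cref{prop:Maximal compatible characterises all compatibility} and the success/failure language. Compatibility of $(p^*\O,p^*\O_m)$ requires two things. First, $p^*\O_m\subseteq p^*\O$, which is clear since $p^*$ is monotone. Second, every restriction into an arrow of $p^*\O_m$ must be a success. By (d), this reduces to the corresponding condition for $(\O,\O_m)$.

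Part (4) is where I expect the main obstacle. Here one must show $\mc{p^*\O}=p^*\mc{\O}$. The inclusion $\supseteq$ is part (3) applied to $(\O,\mc{\O})$. For $\subseteq$, I would use the recursive formula of \cref{thm:r' can be assumed to be in M(O) 2} and argue by induction on the size of the restriction poset below an arrow $e$ of $p^*\O$. Suppose $e\in\mc{p^*\O}$. Then every $r<e$ lies in $\mc{p^*\O}$ and is a success. By induction each such $r$ lies in $p^*\mc{\O}$, so $p(r)\in\mc{\O}$. By (d), $p(r)<p(e)$ is a success, and by the surjectivity in (b) this covers every restriction of $p(e)$. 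The recursive formula then gives $p(e)\in\mc{\O}$, hence $e\in p^*\mc{\O}$.

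The delicate point is that different restrictions of $e$ can collapse to the same restriction of $p(e)$, including possibly $p(e)$ itself. So the induction must be organised on $\O$-side data, or restricted to strict restrictions whose image is strict. Verifying (d) carefully against the precise success condition is where I would spend most of the effort.
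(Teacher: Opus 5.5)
\textbf{The gap is item (d).} As stated it is false, and the half that fails is the half you use in part (4).

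Fix $e=(K\to H)\in p^*\O$ and $r=(K\cap J\to J)$ with $J\le H$. Since $H\cap N\le K$, we have $K\cap N=H\cap N$. Your description of $p^*\O$ and your key observation then give
\[
K\cap J\to K\in p^*\O \iff H\cap N\le J \ \text{and}\ p(K)\cap p(J)\to p(K)\in\O ,
\]
while $J\to H\in p^*\O$ if and only if $H\cap N\le J$ and $p(J)\to p(H)\in\O$. The hypothesis $H\cap N\le K$ says nothing about $J$. So success of $p(r)<p(e)$ implies success of $r<e$, but the converse fails: when $H\cap N\not\le J$, the test upstairs is vacuous.

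Here is a counterexample. Let $G=C_4\times C_2=\langle x\rangle\times\langle y\rangle$ and $N=\langle y\rangle$. Let $\O$ be the $C_4$-transfer system generated by $1\to C_4$. Then $e=(N\to G)\in p^*\O$. Take $J=\langle x^2\rangle$; the restriction $r=(1\to J)$ lies in $p^*\O$. It satisfies $r<^S e$ vacuously, because $1\to N\notin p^*\O$. Yet $p(r)=(1\to C_2)<^F(1\to C_4)=p(e)$, since $C_2\to C_4\notin\O$.

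\textbf{What survives.} Part (3) uses only the valid direction, so it stands. It is in fact cleaner than the paper's route. The paper proves (3) first only for saturated $\O_m$ (\cref{prop: inflation preserves saturated compatibility}, via a detour through $KJ$), and recovers the general case from (4). Your computation shows $H\cap N=K\cap N\le J$ comes for free, so no saturation hypothesis is needed. Parts (1) and (2) are fine too; with your description, (2) is immediate from $H\cap N\le K\le J$.

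\textbf{Part (4) needs repair.} The sentence ``By (d), $p(r)<p(e)$ is a success'' is exactly the invalid implication. The collapsing of restrictions you worry about is harmless; what matters is choosing good lifts. Given $\bar r<p(e)$ along $\bar J<p(H)$, lift along $J=p^{-1}(\bar J)\cap H$, as in your surjectivity argument. This $J$ contains $H\cap N$, so for this lift success does pass downward, and your induction goes through.

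Once you see this, the induction is unnecessary. If $p(e)\notin\mc{\O}$, then \cref{prop:computing Om via restricting edges} gives $\bar I<\bar J\le p(H)$ with $p(K)\cap\bar I\to p(K)\cap\bar J\in\O$ and $\bar I\to\bar J\notin\O$. The lifts $I=p^{-1}(\bar I)\cap H$ and $J=p^{-1}(\bar J)\cap H$ give $K\cap I\to K\cap J\in p^*\O$ and $I\to J\notin p^*\O$. Hence $e\notin\mc{p^*\O}$. This is the paper's proof of $\mc{p^*\O}\subseteq p^*\mc{\O}$.

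\textbf{A smaller point.} Your working description of $p^*\O$ agrees with \cref{lem:in inflation iff restriction}, since $KN\cap H=K(H\cap N)$ for $K\le H$. It is a lemma, not the definition. Your (a), together with the fact that $K\to H$ is the restriction of $KN\to HN$ along $H$, proves it.
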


One practical consequence of this theorem is a further refinement of \cref{thm:maximal compatible disk-like via cover relations}. 
\begin{corollary}[\cref{cor:universal-transfer}]
The computations of $\M(\O)$ for any disklike $G$-transfer systems $\O$ can be reduced to the computation of $\M(\O')$ for a disklike $Q$-transfer systems $\O'$ with the universal transfer $1\to Q$, where $Q$ is a quotient group of $G$.
\end{corollary}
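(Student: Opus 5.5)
The corollary will follow from \cref{thm:maximal compatible disk-like via cover relations} together with \cref{thm:inflation}(1) and (4). The plan is to factor a disklike $G$-transfer system $\O$ as an inflation $\O = p^*\O'$ along a suitable quotient $p\colon G\onto Q$. The quotient $Q$ is chosen so that $\O'$ is disklike and contains the universal transfer $1\to Q$. Then \cref{thm:inflation}(4) gives $\mc{\O}=p^*\mc{\O'}$, which reduces the computation of $\mc{\O}$ to that of $\mc{\O'}$.

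\textbf{Choosing $N$.} Let $N$ be the largest subgroup with $1\to N$ in $\O$. I expect the disklike hypothesis to give exactly this: the set of $H$ with $1\to H\in\O$ should have a unique maximal element. That element is then conjugation-invariant, since transfer systems are closed under conjugation, so $N\trianglelefteq G$.

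\textbf{The quotient system.} Set $Q=G/N$ and define
\[
\O'=\{\,K/N\to L/N \;:\; N\le K\le L,\ K\to L\in\O\,\}.
\]
I would check in turn that $\O'$ is a $Q$-transfer system, that it is disklike, and that it contains $1=N/N\to Q$. The last point amounts to $N\to G\in\O$. This is exactly where disklikeness should be used: I expect the definition to say, roughly, that $\O$ is generated by the transfers out of the trivial group together with restriction. Under that reading, $1\to N$ with $N$ maximal should force $N\to G$; if it does not hold, I would replace $N$ by the appropriate maximal element supplied by the disk structure.

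\textbf{The main step: $\O=p^*\O'$.} By \cref{def:inflation}, $p^*\O'$ consists of the $K\to L$ with $KN\to LN$ in $\O$ and $K\cap N$ governed suitably. So I must show that every $K\to L$ in $\O$ satisfies this condition, and conversely. The inclusion $\O\subseteq p^*\O'$ uses closure of $\O$ under restriction and composition with $1\to N$. The reverse inclusion is the hardest part, and I expect it to need disklikeness again. My first attempt would be to write $K\to L$ as a restriction of $KN\to LN$ and use the fact that $\O$ is closed under restriction. Once $\O=p^*\O'$ is established, \cref{thm:inflation}(4) applied to the maximally compatible pair $(\O',\mc{\O'})$ completes the proof.
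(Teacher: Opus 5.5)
\textbf{There is a genuine gap: your choice of $N$ is wrong.} The overall strategy matches the paper's: realize $\O$ as an inflation $p^*\O'$ along a quotient, then apply \cref{thm:inflation}(4). The error comes from misreading the definition. Disklike means generated by transfers with \emph{target} $G$ (\cref{def:disk-like}), not by transfers out of the trivial group.

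In a disklike $\O$, \cref{prop:disklikecharacterization} gives $1\to H\in\O$ if and only if $H\cap\NO=1$, where $\NO$ is the minimal subgroup transferring to $G$. So you are maximizing over subgroups that meet $\NO$ trivially, and there is no reason for a unique maximum.
\begin{itemize}
\item For $G=C_p\times C_p$ and $\O=T(A\to G)$ with $|A|=p$, each of the $p\ge 2$ other subgroups of order $p$ receives a transfer from $1$. Your $N$ is then undefined.
\item When a maximum exists, it is the wrong subgroup. Take $G=C_{pq}$ and $\O=T(C_p\to G)$, whose only nonreflexive transfers are $C_p\to G$ and $1\to C_q$. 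Your $N$ is $C_q$, but $C_q\to G\notin\O$. So $\O'$ is the trivial $C_p$-transfer system and $p^*\O'$ is trivial, not $\O$.
\end{itemize}
The fallback of replacing $N$ by ``the appropriate maximal element supplied by the disk structure'' does not specify a repair.

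\textbf{The correct choice.} Take $N=\NO$, the intersection of all $H$ with $H\to G\in\O$. By \cref{lem:disk-like generators closed under intersection} it is itself a transfer to $G$, hence the unique minimal one. It is normal because conjugates of $\NO\to G$ again lie in $\O$. Then $\O'=\O^N$, the part of $\O$ on $[\NO,G]$, contains $1=N/N\to G/N$ by construction.

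\textbf{You have the two inclusions of the main step reversed.}
\begin{itemize}
\item The inclusion $p^*\O'\subseteq\O$ is the automatic one. By \cref{lem:in inflation iff restriction}, $p^*\O'=\Res(p^{-1}\O')$. Every transfer there is a restriction of a transfer in $p^{-1}\O'\subseteq\O$, and $\O$ is closed under restriction. Your ``first attempt'' already proves this, with no disklike input.
\item The real content is $\O\subseteq p^*\O'$. Composing with $1\to N$ does not produce $KN\to LN$ from $K\to L$. What works is the following. Write $K\to L$ as $H\cap L\to L$ with $H\to G\in\O$. Since $\NO\le H$, the modular law gives $H\cap LN=(H\cap L)N=KN$. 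So $KN\to LN$ is the restriction of $H\to G$ to $LN$. Also $KN\le H$ gives $KN\cap L=K$.
\end{itemize}
The same $H$ shows that $\O^N$ is disklike: when $N\le K$, the transfer $K/N\to L/N$ is the restriction of $H/N\to G/N$. This is the content of \cref{lem:no universal transfer means image of unit}. With $\O=p^*\O^N$ established, \cref{thm:inflation}(4) gives $\mc{\O}=p^*\mc{\O^N}=\Res(p^{-1}\mc{\O^N})$. \Cref{thm:maximal compatible disk-like via cover relations} is not needed.
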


The formulae of Theorems \ref{thm:r' can be assumed to be in M(O) 2} and \ref{thm:maximal compatible disk-like via cover relations} are straightforward to wield in practice, and are recursive in nature, which makes them well-suited to computational methods.
For example, \cref{alg:maximal compatible of disk-like} computes the maximal compatible transfer system of an arbitrary disklike $G$-transfer system $\Oa$ in time proportional to the number of restriction cover relations in $\Oa$ (\cref{prop:disk-like maximal compatible alg time complexity}).

The characterizations also reveal that saturation and maximal compatibility are intrinsically linked.
Namely we were able to prove the following facts using our formulae:
a transfer system is saturated if, and only if, it is self-compatible, this was originally proved in \cite{BiIncomplete} and we have another proof using our formulae (\cref{prop:saturated iff self compatible});
the maximal compatible transfer system is always saturated (\cref{prop:maximal compatible element is saturated}); 
the maximal compatible transfer system can only be disklike if it is saturated (\cref{prop: maximal compatible is disk-like implies self compatible});
and if $(\O,\Om)$ is compatible, then the saturated hull of $\Om$ remains compatible with $\O$ (\cref{cor:hull}).

It seems likely that the formulae identified in this paper will admit further simplifications, both for specific types of transfer systems, and for specific groups. For example, there is experimental evidence (\cref{prop:experimental verification of conj}), that if $\O$ is a disklike $G$-transfer system, then $\mc{\O}$ admits the following simplification of \cref{thm:main}, where we just check directly the compatibilty successes.

\begin{conjecture}[\cref{conj:maximal compatible disk-like via single restriction}]
    Let $\O$ be a disklike $G$-transfer system, then 
    \begin{align*}
        \mc{\O} = \{ e \in \O \, | \, \forall r < e, r <^S e \},
    \end{align*}
    where $r<e$ means, that in the poset of restrictions in $\O$, the transfer $e$ restricts into $r$, and $r<^S e$ means that this restriction satisfies the compatibility condition (\cref{def:compatibility successes and failures}).
\end{conjecture}

\subsection*{Acknowledgments}
Our work on this project was supported by the AMS Mathematics Research Communities Program in Homotopical Combinatorics and NSF grant DMS–1916439. 

DeMark, Hill, Niu, and Van Niel would like to thank the Isaac Newton Institute for Mathematical Sciences, Cambridge, and Queen's University Belfast for support and hospitality during the programme Operads and Calculus, where work on this paper was undertaken. This work was supported by EPSRC grant EP/Z000580/1.  
Hill, Kamel, and Van Niel would like to thank the Isaac Newton Institute for Mathematical Sciences, Cambridge, for support and hospitality during the programme New Horizons for Equivariance in Homotopy Theory, where work on this paper was undertaken. This work was also supported by EPSRC grant EP/Z000580/1.
Stoeckl was supported by an Australian Government Research Training Program (RTP) Scholarship and the Australian Research Council Future Fellowship FT210100256.
Van Niel was partially supported by the NSF grants DMS--2135884 and DMS--2052923.

\section{Background}\label{sec:Background}

\subsection{Transfer Systems}
We will denote the subgroup lattice of $G$ as $\Sub(G)$, which has a natural partial order $\leq$ defined by the subgroup inclusion. 

\begin{definition}\label{def:tranfsys} Let $G$ be a finite group. 
A \defn{$G$-transfer system} $\O$ is a 
binary relation refining subgroup inclusion on $\Sub(G)$,
consisting of \defn{transfers} denoted by $\to$, 
that satisfies the following additional conditions:
\begin{enumerate}
    \item(Reflexivity) $H\to H$ for all $H\leq G$, 
    \item (Conjugation) $K\to H$ implies $gKg^{-1}\to gHg^{-1}$ for all $g\in G$,
    \item (Restriction) $K\to H$ and $L\leq H$ implies $(K\cap L)\to (H\cap L)$, and
    \item (Composition) $L\to K$ and $K\to H$ implies $L\to H$.
\end{enumerate}
We denote the set of $G$-transfer systems by $\tr(G)$.
Transfer systems may also be defined on an arbitrary lattice by ignoring condition (2).
We call these \emph{categorical transfer systems} for clarity on the rare occasions they come up.
We denote the set of categorical transfer systems on $\mathcal P$ by $\tr(\mathcal P)$.
\end{definition}

Throughout this paper, a $G$-transfer system $\O$ will be represented as a directed graph whose vertices comprise $\Sub(G)$ and whose edges represent the partial order relation.  
\begin{notation}
We use arrows to denote transfers in $\O$.
We sometimes employ dotted lines to denote subgroup inclusions which may not be transfers in $\O$ for additional clarity.
For conciseness, we omit arrows that represent the reflexivity condition in both $G$-transfer systems and other binary relations known to be reflexive. 
\end{notation}
See the leftmost and rightmost diagrams of \cref{ex:Sigma3 example} for two examples of transfer systems.

There are two fundamental transfer systems for any group $G$, which are maximal and minimal among all $G$-transfer systems when ordered by inclusion.

\begin{definition}\label{def:complete and trivial transfers}
The \defn{complete $G$-transfer system} has \(H\to K\) if and only if \(H\leq K\).

The \defn{trivial $G$-transfer system} only has the transfers $H \to H$ for all $H \leq G$.
\end{definition}

The collection of all transfer systems for a given group forms a lattice under refinement.

\begin{proposition}[{\cite[Proposition 3.1]{RubinOperadicLifts}}] \label{prop:lattice of G-transfer systems}
The set of all $G$-transfer systems $\Tr(G)$ is a lattice ordered by containment.
If $\O$ and $\O'$ are two $G$-transfer systems, their \defn{meet} is $\O \land \O'$ is defined by intersection $\O \cap \O'$, and their \defn{join} $\O \lor \O'$ is the meet of all transfer systems which contain both.
\end{proposition}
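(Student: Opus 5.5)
To prove \cref{prop:lattice of G-transfer systems}, I will first show that the intersection of any family of $G$-transfer systems is again a $G$-transfer system, and then use this to build the join. This gives a complete lattice structure on the finite poset $\Tr(G)$, ordered by containment of relations on $\Sub(G)$.

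\textbf{Step 1: intersections are transfer systems.} Let $\{\O_i\}_{i\in I}$ be a family of $G$-transfer systems. Write $\bigcap_i \O_i$ for their intersection as subsets of the inclusion relation on $\Sub(G)$. I check the four axioms of \cref{def:tranfsys} one at a time.
\begin{enumerate}
    \item (Reflexivity) Each $\O_i$ contains every $H\to H$, so the intersection does too.
    \item (Conjugation) If $K\to H$ lies in every $\O_i$, then $gKg^{-1}\to gHg^{-1}$ lies in every $\O_i$.
    \item (Restriction) If $K\to H$ lies in every $\O_i$ and $L\leq H$, then $(K\cap L)\to(H\cap L)$ lies in every $\O_i$.
    \item (Composition) If $L\to K$ and $K\to H$ both lie in every $\O_i$, then $L\to H$ lies in every $\O_i$.
\end{enumerate}
In each case the axiom holds in each $\O_i$ separately, so it holds in the intersection. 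Since the intersection is contained in each $\O_i$ and contains anything contained in all of them, $\O\cap\O'$ is the greatest lower bound of $\O$ and $\O'$. This justifies $\O\wedge\O'=\O\cap\O'$.

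\textbf{Step 2: the join.} Given $\O$ and $\O'$, consider the family of all $G$-transfer systems containing both. This family is nonempty, because the complete $G$-transfer system of \cref{def:complete and trivial transfers} contains every transfer system. It is also finite, since $\Sub(G)$ is finite. By Step 1, the intersection of this family is a transfer system. It contains both $\O$ and $\O'$, and it is contained in every upper bound. Hence it is the least upper bound $\O\vee\O'$, which is exactly the definition given in the statement.

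Since every pair of elements has both a meet and a join, $\Tr(G)$ is a lattice. The only point needing any care is confirming that the family of upper bounds is nonempty, and the complete transfer system handles that. I expect no genuine obstacle: the argument is entirely formal.
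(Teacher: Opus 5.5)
Your proposal is correct and is the standard argument for this result; the paper gives no proof of its own and simply cites Rubin. You show that an intersection of transfer systems is again a transfer system. You then take the join to be the intersection of the nonempty family of upper bounds, which contains the complete transfer system.
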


One can also generate a transfer system from a binary relation.

\begin{definition}\label{def:generated tranfer system}
Let $B$ be a binary relation refining inclusion on $\Sub(G)$. The \defn{transfer system generated by $B$}, denoted $T(B)$, is defined to be the intersection of all $G$-transfer systems containing $B$.
We shall also say that $B$ \defn{generates} $T(B)$. 
\end{definition}

An equivalent perspective which we find more useful in our later calculations is closing the binary relation under reflexivity, conjugation, restriction, and then composition. Meaning, we first add in all missing reflexive arrows, conjugate arrows, restriction arrows, and then composition arrows. 

\begin{definition}[{\cite[Construction A.1]{RubinDetectingOperads}}]
Let $B$ be a binary relation on $\Sub(G)$ that refines inclusion. We have the following definitions.
\begin{enumerate}
    \item The \defn{reflexive closure} of $B$ is
        \[
        \Refl(B) = B\cup \bigcup_{H\leq G}\{ H \rightarrow H\}.
        \]
    \item The \defn{conjugate closure} of $B$ is
    \[
\Conj(B)= \bigcup_{K \to H \in B}\{gKg^{-1} \to gHg^{-1} \, | \, g\in G\}.
    \]
    \item The \defn{restriction closure} of $B$ is
    \[
    \Res(B)=  \bigcup_{K \to H \in B}\{L\cap K \to L|L \leq H\}.
    \]
    
    \item The \defn{composition closure} of $B$ is
    \begin{align*}
        \Comp(B)=   \{H_0 \to H_n \, | \,  H_0 \leq \dots \leq H_n \text{ with each } H_{i-1} \to H_{i} \in B\}.
    \end{align*}
\end{enumerate}
\end{definition}

\begin{proposition}[{\cite[Theorem A.2]{RubinDetectingOperads}}] \label{prop:transfer system generated by relations}
Let $B$ be a binary relation refining inclusion on $\Sub(G)$.
Then $T(B)$ is the closure of $B$ under, reflexivity, conjugation, restriction, then composition; or equivalently one may commute restriction and conjugation: 
\[
T(B) = \Comp(\Res(\Conj(\Refl(B)))) = \Comp(\Conj(\Res(\Refl(B)))).
\]
One may also commute $\Refl(-)$  with all other operators.
\end{proposition}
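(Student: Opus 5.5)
The plan is to prove that $C := \Comp(\Res(\Conj(\Refl(B))))$ is a $G$-transfer system containing $B$, and that it is contained in every transfer system containing $B$. Together these give $C = T(B)$. I will then show the other order of operations, $\Comp(\Conj(\Res(\Refl(B))))$, gives the same relation, and that $\Refl$ can be moved anywhere.

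\textbf{Minimality.} Every $G$-transfer system $\O \supseteq B$ is closed under reflexivity, conjugation, restriction and composition, by \cref{def:tranfsys}. Applying each operator in turn to a relation contained in $\O$ therefore stays inside $\O$, so $C \subseteq \O$. Also $B \subseteq \Refl(B)$, and $\Conj$, $\Res$, $\Comp$ are inflationary: use $g = e$, $L = H$, and chains of length one. Hence $B \subseteq C$.

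\textbf{$C$ is a transfer system.} Write $R = \Res(\Conj(\Refl(B)))$. First I check that $R$ is reflexive, conjugation-closed and restriction-closed.
\begin{itemize}
\item Reflexivity is inherited from $\Refl(B)$.
\item Conjugation-closure: $g(L\cap gKg^{-1}\cdots)$ is again a restriction of a conjugate, because conjugation commutes with intersection, i.e.\ $g(L\cap K)g^{-1} = gLg^{-1}\cap gKg^{-1}$ with $gLg^{-1}\le gHg^{-1}$.
\item Restriction-closure: restricting $L\cap K \to L$ along $M\le L$ gives $M\cap L\cap K = M\cap K \to M$ with $M \le H$, which is already in $R$.
\end{itemize}
Next, $\Comp(R)$ is clearly composition-closed, since chains concatenate, and it is reflexive and conjugation-closed because conjugating a chain gives a chain.

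The main obstacle is restriction-closure of $\Comp(R)$. Take a chain $H_0\to H_1\to\cdots\to H_n$ with each step in $R$, and let $L\le H_n$. I restrict step by step, setting $L_i = L\cap H_i$. Then $L_{i-1} = L_i \cap H_{i-1}$ and $L_i \le H_i$, so restriction-closure of $R$ gives $L_{i-1}\to L_i$ in $R$. This produces a chain $L\cap H_0 = L_0 \le \cdots \le L_n = L$, so $L\cap H_0 \to L$ lies in $\Comp(R)$. Hence $C$ is a transfer system, and $C = T(B)$.

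\textbf{Other orders.} For the commuted form, observe that $\Res$ and $\Conj$ commute as operators: $\Conj(\Res(X)) = \Res(\Conj(X))$, by the identity $g(L\cap K)g^{-1} = gLg^{-1}\cap gKg^{-1}$ together with the fact that $L\le H$ if and only if $gLg^{-1}\le gHg^{-1}$. So both expressions are literally equal.

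For $\Refl$, note that $\Conj$, $\Res$ and $\Comp$ each send reflexive relations to reflexive relations. They also only add diagonal arrows $H\to H$ when applied to diagonal arrows, since restricting $H\to H$ gives $L\to L$. Thus $\Refl$ applied at any stage produces the same final relation, either by adding the diagonal directly or via $\Res(\Refl(\cdot))\supseteq$ the diagonal. Inserting $\Refl$ at any position therefore yields $C$.
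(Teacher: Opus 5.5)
Your proof is correct. The paper gives no proof of its own and simply cites Rubin; your direct verification is the natural argument: $\Res(\Conj(\Refl(B)))$ is closed under conjugation and restriction, and the stepwise restriction $L_i=L\cap H_i$ of a chain shows that $\Comp$ of it stays restriction-closed.

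One sentence in your $\Refl$ paragraph needs repair. Read literally, ``they only add diagonal arrows when applied to diagonal arrows'' is false, since restricting a non-diagonal $K\to H$ along $L\le K$ yields $L\to L$. What you actually need, and what your justification supports, is $F(X\cup\Delta)=F(X)\cup\Delta$ for each $F\in\{\Conj,\Res,\Comp\}$, where $\Delta$ is the diagonal. This holds because $\Conj$ and $\Res$ distribute over unions and fix $\Delta$, and diagonal steps can simply be deleted from a chain in $\Comp$.
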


\begin{remark}
Although we treat reflexivity explicitly, one can also safely ignore it and just always assume all binary relations have reflexive edges.
\end{remark}

This process allows us to consider generating sets of edges which give a transfer system. 
We now provide a concrete example of a $G$-transfer system, and this method of generating a transfer system from a binary relation.

\begin{example}\label{ex:Sigma3 example}
The complete $\Sigma_3$-transfer system is displayed to the left below.
Recall that $\langle (12) \rangle,\langle (13) \rangle,$ and $\langle (23) \rangle$ are conjugate to each other.
To the right, we drop the subgroup labels for simplicity, and consider the transfer system generated by the binary relation on $\Sub(\Sigma_3)$ consisting of $\langle (12) \rangle \to \Sigma_3$.
We emphasize, that by \Cref{prop:transfer system generated by relations}, the transfer system generated by this relation can be calculated in two equivalent ways.


\newsavebox\aSigmaEx
\sbox\aSigmaEx{
{\centering\scriptsize
\hspace{0.0cm}
\begin{tikzcd}[ampersand replacement = \&, column sep = -0.8mm, row sep = small]
\&\& \bullet\\
\& \phantom{\bullet} \&\& \bullet\\
 \bullet \&\& \bullet  \&\& \bullet \\
\&\& \bullet
\arrow[from=3-1, to=1-3, bend left =27]
\end{tikzcd}
}
}

\newsavebox\bSigmaEx
\sbox\bSigmaEx{
{\centering\scriptsize
\hspace{0.0cm}
\begin{tikzcd}[ampersand replacement = \&, column sep = -0.8mm, row sep = small]
\&\& \bullet\\
\& \phantom{\bullet} \&\& \bullet\\
 \bullet \&\& \bullet  \&\& \bullet \\
\&\& \bullet
\arrow[from=3-1, to=1-3, bend left =27]
\arrow[from=3-3, to=1-3]
\arrow[from=3-5, to=1-3, bend right = 27]
\end{tikzcd}
}
}

\newsavebox\cSigmaEx
\sbox\cSigmaEx{
{\centering\scriptsize
\hspace{0.0cm}
\begin{tikzcd}[ampersand replacement = \&, column sep = -0.8mm, row sep = small]
\&\& \bullet\\
\& \phantom{\bullet} \&\& \bullet\\
 \bullet \&\& \bullet  \&\& \bullet \\
\&\& \bullet
\arrow[from=3-1, to=1-3, bend left =27]
\arrow[from=4-3, to=2-4, bend right = 11]
\arrow[from=4-3, to=3-3]
\arrow[from=4-3, to=3-5]
\end{tikzcd}
}
}

\newsavebox\dSigmaEx
\sbox\dSigmaEx{
{\centering\scriptsize
\hspace{0.0cm}
\begin{tikzcd}[ampersand replacement = \&, column sep = -0.8mm, row sep = small]
\&\& \bullet\\
\& \phantom{\bullet} \&\& \bullet\\
 \bullet \&\& \bullet  \&\& \bullet \\
\&\& \bullet
\arrow[from=3-1, to=1-3, bend left =27]
\arrow[from=3-3, to=1-3]
\arrow[from=3-5, to=1-3, bend right = 27]
\arrow[from=4-3, to=2-4, bend right = 11]
\arrow[from=4-3, to=3-1]
\arrow[from=4-3, to=3-3]
\arrow[from=4-3, to=3-5]
\end{tikzcd}
}
}

\newsavebox\eSigmaEx
\sbox\eSigmaEx{
{\centering\scriptsize
\hspace{0.0cm}
\begin{tikzcd}[ampersand replacement = \&, column sep = -0.8mm, row sep = small]
\&\& \bullet\\
\& \phantom{\bullet} \&\& \bullet\\
 \bullet \&\& \bullet  \&\& \bullet \\
\&\& \bullet
\arrow[from=3-1, to=1-3, bend left =27]
\arrow[from=3-3, to=1-3]
\arrow[from=3-5, to=1-3, bend right = 27]
\arrow[from=4-3, to=1-3, bend left = 27]
\arrow[from=4-3, to=2-4, bend right = 11]
\arrow[from=4-3, to=3-1]
\arrow[from=4-3, to=3-3]
\arrow[from=4-3, to=3-5]
\end{tikzcd}
}
}

\newsavebox\completeSigmaEx
\sbox\completeSigmaEx{
{\centering\scriptsize
\hspace{0.0cm}
\begin{tikzcd}[ampersand replacement = \&,column sep = -3mm, row sep = small]
\&\& {\Sigma_3} \\
\& \phantom{\langle (123) \rangle} \&\& {\langle (123) \rangle} \& \\
{\langle (12) \rangle} \&\& {\langle (13) \rangle} \&\& {\langle (23) \rangle} \\
\&\& 1
\arrow[from=2-4, to=1-3]
\arrow[from=3-1, to=1-3, bend left =35]
\arrow[from=3-3, to=1-3]
\arrow[from=3-5, to=1-3, bend right = 35]
\arrow[from=4-3, to=1-3, bend left = 40]
\arrow[from=4-3, to=2-4, bend right = 11]
\arrow[from=4-3, to=3-1]
\arrow[from=4-3, to=3-3]
\arrow[from=4-3, to=3-5]
\end{tikzcd}
}
}

\begin{center}
\vspace{0.5cm}
{\small
\begin{tikzpicture}[thick,node distance=3.5cm,inner sep = -0.4cm] 
\node(a) {\usebox{\aSigmaEx}}; 
\node(b) [above right of=a]{\usebox{\bSigmaEx}};
\node(c) [below right of=a]{\usebox{\cSigmaEx}}; 
\node(d) [below right of=b]{\usebox{\dSigmaEx}};
\node(e) [right of=d]{\usebox{\eSigmaEx}};
\node(complete) [left of=a]{\usebox{\completeSigmaEx}};
\draw [|->] (a) -- (b) node[midway,above left, yshift = 0.3cm,xshift = -0.3cm] {Conj};
\draw [|->] (a) -- (c)  node[midway,below left,yshift = -0.3cm,xshift = -0.3cm] {Res};
\draw [|->] (b) -- (d)  node[midway,above right,yshift = 0.3cm,xshift = 0.3cm] {Res};
\draw [|->] (c) -- (d)  node[midway,below right,yshift = -0.3cm,xshift = 0.3cm] {Conj};
\draw [|->, shorten <= 0.1cm,shorten >= 0.1cm] (d) -- (e)  node[midway,above, yshift =0.3cm] {Comp};
\end{tikzpicture}
}    
\vspace{0.25cm}
\end{center}

\noindent Note that the only edge which is not a transfer in the resulting transfer system is $\langle (123) \rangle \to \Sigma_3$.
\end{example}

For more examples of transfer systems, see the Hasse diagram of all ten $C_{pq}$-transfer systems in \cref{fig:Cpq maximal compatible pairs example}.
There are many useful properties that are observable in a transfer system. We will now define saturated and disklike transfer systems which as mentioned in the introduction have applications to certain varieties of ring spectra. Additionally, these properties have implications when we discuss compatible pairs of transfer systems in the next section and throughout this article.

\begin{definition}\label{def:saturated}
We say that a $G$-transfer system $\O$ is \defn{saturated} if for every sequence of subgroups $L \leq K \leq H \leq G$ whenever the transfer $L \to H$ is in $\O$ then $K \to H$ is also in $\O$. This can be visualized as the following:
\[\begin{tikzcd}
	L & K & H.
	\arrow[from=1-1, to=1-2]
	\arrow[curve={height=-12pt}, from=1-1, to=1-3]
	\arrow[dashed, from=1-2, to=1-3]
\end{tikzcd}\]
\noindent 
If $L \to H$ is in $\O$ and $K \to H$ is not in $\O$ this is called a \emph{saturation failure}.
 \end{definition}

Note that in the above diagram, since $\O$ is a transfer system, then $L\to K$ is in $\O$ by restriction.
If a transfer system is not saturated, then one can remedy the saturation failures to generate a saturated transfer system.

\begin{definition}\label{def:hull}
Let $\O$ be a $G$-transfer system.
The \emph{hull} of $\O$, denoted $\hull(\O)$, is the intersection of all saturated $G$-transfer systems which contain $\O$.
\end{definition}

It can be shown that $\hull(\O)$ can also be constructed by adding exactly the transfers which cause a saturation failure in $\O$.

\begin{definition}\label{def:disk-like}
A $G$-transfer system $\O$ is said to be \defn{disklike} if $\O$ can be generated by a set of transfers all with target $G$.
\end{definition}

We will refer to any such set of transfers to \(G\) which generate a disklike transfer system, say $\O$, as a \defn{set of disklike generators} for $\O$.
We say a set of disklike generators is \emph{maximal} if it contains all transfers $H \to G$ in $\O$. 
We say a set of disklike generators is \emph{minimal} if it is equal to the complexity of the $G$-transfer system it generates.
Within a disklike transfer system $\O$, we will refer to any transfer $H\to G\in \O$ as a disklike generator.

\begin{example}
The $\Sigma_3$-transfer system $\Oa$ of \cref{ex:Sigma3 example} is disklike.
The maximal set of disklike generators of $\Oa$ is $\{1 \to \Sigma_3, (12) \to \Sigma_3, (13) \to \Sigma_3, (23) \to \Sigma_3\}.$
There are three minimal sets of disklike generators of $\Oa$ these being $\{(12) \to \Sigma_3\},\{ (13) \to \Sigma_3\}$, and $\{(23) \to \Sigma_3\}$.
\end{example}

The definition of disklike can be simplified. To do this, we first need some lemmas.

\begin{lemma}[{\cite[Lemma A.6]{RubinDetectingOperads}}]\label{lem:disk-like generators closed under intersection}
Let $\O$ be a $G$-transfer system. If $H\to G$, and $K\to G$ are transfers in $\O$, then $H\cap K \to G$ is a transfer in $\O$.
\end{lemma}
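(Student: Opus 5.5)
The argument is a direct chain of two transfer-system axioms: restriction followed by composition. We are given $H \to G$ and $K \to G$ in $\O$.

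First, apply the restriction axiom to the transfer $H \to G$ with the subgroup $L = K \leq G$. This gives
\[
(H \cap K) \to (G \cap K) = K
\]
in $\O$.

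Next, compose this transfer with the given transfer $K \to G$. The chain $H\cap K \leq K \leq G$ has both steps in $\O$, so the composition axiom yields $H \cap K \to G$ in $\O$.

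No step here poses any real difficulty. Conjugation is not used; the only point to check is that restriction applies to a transfer whose target is all of $G$, which holds because $K \leq G$ is trivially satisfied. The argument uses only the restriction and composition axioms, so the lemma also holds verbatim for categorical transfer systems on any lattice with top element $G$.
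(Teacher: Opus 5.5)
Your proof is correct. Restricting $H \to G$ along $K$ gives $H\cap K \to K$, and composing with $K \to G$ gives $H\cap K \to G$. The paper does not reprove this lemma and instead cites Rubin; restriction followed by composition is the standard argument, and, as you note, it never uses conjugation, so it also holds for categorical transfer systems.
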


Rubin showed that in the disklike case, the transfers are particularly simple to describe.

 \begin{proposition}[{\cite[Proposition A.7]{RubinDetectingOperads}}] \label{prop:disklikecharacterization}
 A $G$-transfer system $\O$ is disklike if and only if every transfer $e \in \O$ is a restriction of a transfer $H\to G \in \O$.
 \end{proposition}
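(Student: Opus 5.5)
The plan is to prove the two implications separately, working from the explicit description of generated transfer systems in \cref{prop:transfer system generated by relations}. Recall that a \emph{restriction} of a transfer $K\to H$ means a transfer $L\cap K\to L$ with $L\leq H$.

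For the ``if'' direction, let $D=\{H\to G\in\O\}$ be the set of all transfers in $\O$ with target $G$. By hypothesis every transfer of $\O$ is a restriction of some element of $D$. Since restriction preserves transfers, this gives
\[
\O\subseteq \Res(D)\subseteq T(D)\subseteq \O.
\]
Hence $\O=T(D)$, and $D$ consists of transfers with target $G$, so $\O$ is disklike.

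For the ``only if'' direction, suppose $\O=T(B)$ with every element of $B$ having target $G$. By \cref{prop:transfer system generated by relations},
\[
\O=\Comp(\Res(\Conj(\Refl(B)))).
\]
I will check each closure step in turn.
\begin{enumerate}
\item Conjugating $K\to G$ gives $gKg^{-1}\to G$, again with target $G$.
\item Reflexive edges $H\to H$ are restrictions of $G\to G$, and $G\to G$ lies in $\O$.
\item After restriction, every transfer in $\Res(\Conj(\Refl(B)))$ has the form $L\cap K\to L$ with $K\to G\in\O$. So every element is a restriction of a transfer in $\O$ with target $G$.
\end{enumerate}

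The main obstacle is the composition step: the class of restrictions of $G$-targeted transfers must be closed under composition. Take $L_1\cap K_1\to L_1$ and $L_1\to L_2$ with $L_1=L_2\cap K_2$, where $K_1\to G$ and $K_2\to G$ lie in $\O$. By \cref{lem:disk-like generators closed under intersection}, $K_1\cap K_2\to G\in\O$. Its restriction to $L_2$ is
\[
L_2\cap K_1\cap K_2=L_1\cap K_1\to L_2,
\]
which is exactly the composite. Induction on the length of a chain then shows every element of $\Comp(\cdot)$ is such a restriction. This completes the ``only if'' direction.
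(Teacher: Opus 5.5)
Your argument is correct and is essentially the intended one: the paper does not reprove this result but cites Rubin, and it states \cref{lem:disk-like generators closed under intersection} immediately beforehand because that lemma is what makes restrictions of $G$-targeted transfers closed under composition, which is exactly your key step. Your handling of the $\Refl$, $\Conj$, and $\Res$ stages (reflexive edges as restrictions of $G\to G$, conjugates of $K\to G$ again targeting $G$) is also sound, so together with \cref{prop:transfer system generated by relations} the ``only if'' direction is complete, and the ``if'' direction via $\O\subseteq\Res(D)\subseteq T(D)\subseteq\O$ is fine.
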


There are transfer systems which are both saturated and disklike.
The complete and trivial $G$-transfer systems are both always saturated and disklike for any $G$. 
However, a transfer system being saturated or disklike does not imply the other. 
For example, the $\Sigma_3$-transfer system we obtain in \cref{ex:Sigma3 example} is disklike, but not saturated. 
The saturation failure is that the transfers $1 \to (123)$ and $1 \to \Sigma_3$ are in that transfer system, but the edge $(123) \to \Sigma_3$ is not.
Additionally, in \cref{fig:Cpq maximal compatible pairs example}, $C_{pq}$ has: $7$ saturated transfer systems; $7$ disklike transfer systems; and $4$ of those are both saturated and disklike.

\newsavebox\cpqa
\sbox\cpqa{
{\centering\scriptsize
\hspace{0.0cm}
\begin{tikzcd}[ampersand replacement = \&, column sep = 2.8mm, row sep = 2.2mm]
\& \bullet \\
\bullet \& \& \bullet \\
\& \bullet
\arrow[overlay, from=3-2, to=2-1, ]
\arrow[overlay, from=3-2, to=2-3, ]
\arrow[overlay, from=3-2, to=1-2, ]
\arrow[overlay, from=2-1, to=1-2, ]
\arrow[overlay, from=2-3, to=1-2, ]
\end{tikzcd}
}
}

\newsavebox\cpqb
\sbox\cpqb{
{\centering\scriptsize
\hspace{0.0cm}
\begin{tikzcd}[ampersand replacement = \&, column sep = 2.8mm, row sep = 2.2mm]
\& \bullet\\
\bullet \& \& \bullet \\
\& \bullet
\arrow[overlay, from=3-2, to=2-1, ]
\arrow[overlay, from=3-2, to=2-3, ]
\arrow[overlay, from=3-2, to=1-2, ]
\arrow[overlay, from=2-3, to=1-2, ]
\end{tikzcd}
}
}

\newsavebox\cpqc
\sbox\cpqc{
{\centering\scriptsize
\hspace{0.0cm}
\begin{tikzcd}[ampersand replacement = \&, column sep = 2.8mm, row sep = 2.2mm]
\& \bullet\\
\bullet \& \& \bullet\\
\& \bullet
\arrow[overlay, from=3-2, to=2-1, ]
\arrow[overlay, from=3-2, to=2-3, ]
\arrow[overlay, from=3-2, to=1-2, ]
\arrow[overlay, from=2-1, to=1-2, ]
\end{tikzcd}
}
}

\newsavebox\cpqd
\sbox\cpqd{
{\centering\scriptsize
\hspace{0.0cm}
\begin{tikzcd}[ampersand replacement = \&, column sep = 2.8mm, row sep = 2.2mm]
\& \bullet \\
\bullet \& \& \bullet\\
\& \bullet
\arrow[overlay, from=3-2, to=2-1, ]
\arrow[overlay, from=3-2, to=2-3, ]
\arrow[overlay, from=3-2, to=1-2, ]
\end{tikzcd}
}
}

\newsavebox\cpqe
\sbox\cpqe{
{\centering\scriptsize
\hspace{0.0cm}
\begin{tikzcd}[ampersand replacement = \&, column sep = 2.8mm, row sep = 2.2mm]
\& \bullet \\
\bullet \& \& \bullet \\
\& \bullet
\arrow[overlay, from=3-2, to=2-1, ]
\arrow[overlay, from=2-3, to=1-2, ]
\end{tikzcd}
}
}

\newsavebox\cpqf
\sbox\cpqf{
{\centering\scriptsize
\hspace{0.0cm}
\begin{tikzcd}[ampersand replacement = \&, column sep = 2.8mm, row sep = 2.2mm]
\& \bullet\\
\bullet \& \& \bullet \\
\& \bullet
\arrow[overlay, from=3-2, to=2-3, ]
\arrow[overlay, from=2-1, to=1-2, ]
\end{tikzcd}
}
}

\newsavebox\cpqg
\sbox\cpqg{
{\centering\scriptsize
\hspace{0.0cm}
\begin{tikzcd}[ampersand replacement = \&, column sep = 2.8mm, row sep = 2.2mm]
\& \bullet\\
\bullet \& \&\bullet\\
\& \bullet
\arrow[overlay, from=3-2, to=2-1,]
\arrow[overlay, from=3-2, to=2-3,]
\end{tikzcd}
}
}

\newsavebox\cpqh
\sbox\cpqh{
{\centering\scriptsize
\hspace{0.0cm}
\begin{tikzcd}[ampersand replacement = \&, column sep = 2.8mm, row sep = 2.2mm]
\& \bullet\\
\bullet \& \& \bullet\\
\& \bullet
\arrow[overlay, from=3-2, to=2-1,]
\end{tikzcd}
}
}

\newsavebox\cpqi
\sbox\cpqi{
{\centering\scriptsize
\hspace{0.0cm}
\begin{tikzcd}[ampersand replacement = \&, column sep = 2.8mm, row sep = 2.2mm]
\& \bullet\\
\bullet \& \& \bullet \\
\& \bullet
\arrow[overlay, from=3-2, to=2-3,]
\end{tikzcd}
}
}

\newsavebox\cpqj
\sbox\cpqj{
{\centering\scriptsize
\hspace{0.0cm}
\begin{tikzcd}[ampersand replacement = \&, column sep = 2.8mm, row sep = 2.2mm]
\& \bullet\\
\bullet \& \& \bullet\\
\& \bullet
\end{tikzcd}
}
}

\subsection{Compatible Pairs}
Let us recall the definition of a compatible pair of transfer systems.

\begin{definition}[{\cite[Definition 4.6]{ChanTambara}}] \label{def:compatible pair}
A pair of $G$-transfer systems $(\Oa,\Om)$ is \defn{compatible} if the following two conditions hold:
\begin{enumerate}
    \item $\Om \subseteq \Oa$, and
    \item for any $K, J \leq H \leq G$, if $K \to H$ is in $\Om$, and $K \cap J \to K$ is in $\Oa$, then $J \to H$ is in $\Oa$. 
\end{enumerate}

Condition $(2)$ can be visualized as \cref{eq:CompatDiag}, where if the thick edges are in $\Om$ (and consequently $\Oa$), and the thin edge is in $\Oa$, then the dashed edge must be in $\Oa$ to be compatible.
\begin{equation}\label{eq:CompatDiag}
\begin{tikzcd}[column sep = tiny, row sep = tiny]
& H \\
K && J \\
& {K \cap J}
\arrow[very thick, from=2-1, to=1-2]
\arrow[dashed, from=2-3, to=1-2]
\arrow[from=3-2, to=2-1]
\arrow[very thick, from=3-2, to=2-3]
\end{tikzcd}
\end{equation}
\end{definition}

In the above diagram, since $\Om$ is a transfer system, if $K \to H$ is in $\Om$ then restricting that transfer by $J$ shows that $K \cap J \to J$ must also be in $\Om$.

The most basic examples of compatible pairs are pairings with the complete and trivial transfer systems, see \cref{def:complete and trivial transfers}. The following is a well known result.

\begin{proposition}\label{prop:Complete and trivial compatibility}
    Let $C$ and $E$ be the complete and trivial $G$-transfer systems respectively. For any $G$-transfer system $\O$, $(C,\O)$ and $(\O, E)$ are always compatible pairs.
\end{proposition}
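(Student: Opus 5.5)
The plan is to check the two conditions of \cref{def:compatible pair} directly for each pair. For $(C,\O)$, condition (1) holds because the complete transfer system $C$ contains every inclusion $H' \leq H$, so in particular every transfer of $\O$. For $(\O,E)$, condition (1) holds because $E$ consists only of the reflexive arrows $H \to H$, and these lie in every transfer system by reflexivity in \cref{def:tranfsys}.

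For condition (2) of the pair $(C,\O)$, take $K, J \leq H$ with $K \to H$ in $\O$ and $K\cap J \to K$ in $C$. The conclusion required is $J \to H$ in $C$. This holds because $J \leq H$ and $C$ contains every subgroup inclusion. Nothing about $\O$ is used here.

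For condition (2) of the pair $(\O,E)$, suppose $K \to H$ is in $E$. Then $K = H$. Since $J \leq H$, we have $K \cap J = H \cap J = J$. The remaining hypothesis, that $K \cap J \to K$ lies in $\O$, therefore says exactly that $J \to H$ lies in $\O$, which is the required conclusion.

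There is no real obstacle in this argument. The only points needing care are the simplification $K\cap J = J$, which uses $J \leq H$, and the observation that the roles of $\Oa$ and $\Om$ are reversed between the two pairs.
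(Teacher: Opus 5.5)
Your proof is correct: you check both conditions of the definition of a compatible pair directly. The key step for $(\O,E)$ is that $K=H$ forces $K\cap J=J$, so the hypothesis $K\cap J\to K\in\O$ is literally the required conclusion $J\to H\in\O$; the paper states this proposition as well known and gives no proof, so this direct verification is the expected argument.
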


\begin{remark}
    A transfer system is not always compatible with itself. In fact, $(\O,\O)$ is a compatible pair if, and only if, $\O$ is saturated. 
\end{remark}

Compatibility is preserved under joins in the multiplicative slot.

\begin{proposition} [{\cite[Proposition 7.84]{BiIncomplete}}] \label{prop:join preserves compatibility}
Let $\Oa, \O_m$ and $\O'_m$ be $G$-transfer systems.
If $(\Oa, \O_m)$ and $(\Oa, \O'_m)$ are
both compatible, then $(\Oa, \O_m \lor \O'_m)$ is also compatible.
\end{proposition}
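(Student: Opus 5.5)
We must show that $(\Oa, \O_m \lor \O'_m)$ satisfies both conditions of \cref{def:compatible pair}. Condition (1) is immediate: $\O_m \subseteq \Oa$ and $\O'_m \subseteq \Oa$, and $\Oa$ is a transfer system containing both, so the join, as the meet of all such transfer systems, lies in $\Oa$.

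For condition (2), consider the set
\[
S = \{K \to H \in \Oa \,|\, \text{for all } J \leq H, \text{ if } K\cap J \to K \in \Oa \text{ then } J \to H \in \Oa\}.
\]
Compatibility of $(\Oa,\O_m)$ says exactly that $\O_m \subseteq S$, and likewise $\O'_m \subseteq S$. The plan is to show that $S$ is itself a $G$-transfer system. Then $S$ contains $\O_m \lor \O'_m$, which gives compatibility. Equivalently, by \cref{prop:transfer system generated by relations}, it suffices to show that $S$ is closed under reflexivity, conjugation, restriction and composition, since $\O_m\lor\O'_m = T(\O_m\cup\O'_m)$.

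The closure properties are checked one at a time.

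\emph{Reflexivity.} Take $H \to H$. If $H \cap J = J \to H$ lies in $\Oa$, then the required $J \to H$ is that very edge.

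\emph{Conjugation.} This follows from conjugation invariance of $\Oa$.

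\emph{Composition.} Suppose $L \to K$ and $K \to H$ lie in $S$, $J \leq H$, and $L \cap J \to L \in \Oa$. Set $J' = K \cap J \leq K$. Then $L \cap J' = L \cap J$, so applying $L\to K\in S$ gives $K\cap J \to K \in \Oa$. Applying $K \to H \in S$ then gives $J \to H \in \Oa$.

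\emph{Restriction.} This is the step I expect to be the main obstacle. Take $K\to H \in S$ and $M \leq H$; we need $K\cap M \to M \in S$. So let $J \leq M$ with $K\cap M\cap J = K\cap J \to K\cap M$ in $\Oa$, and we want $J \to M \in \Oa$. Since $J\le M$, we have $K\cap J = K\cap M\cap J$. It is not obvious how to lift $K\cap J \to K\cap M$ to an edge into $K$ so as to use $K\to H\in S$.

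The first thing to try is to avoid this directly. Since $\O_m$ is itself a transfer system, its restrictions already lie in $\O_m$, so one can bypass restriction closure of $S$ by using the generation order of \cref{prop:transfer system generated by relations}, namely $T(B)=\Comp(\Res(\Conj(B)))$ with $B=\O_m\cup\O'_m$. The sets $\O_m\cup\O'_m$ are already reflexive, conjugation closed and restriction closed, because each of $\O_m$, $\O'_m$ is. Therefore $\O_m\lor\O'_m = \Comp(\O_m\cup \O'_m)$. Every edge of the join is then a composite of edges from $\O_m\cup\O'_m$, each of which lies in $S$, and composition closure of $S$ (proved above) finishes the argument. This reduces the whole proof to the composition computation, so the hard restriction step is not needed.
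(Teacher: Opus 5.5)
Your proof is correct. The paper only cites this result from Blumberg--Hill, but your argument is exactly what its own machinery gives: your conjugation and composition closures of $S$ are Lemma~\ref{lem: Compatibility of conjugates} and Lemma~\ref{lem: Compatibility of Composites}, and your bypass is Lemma~\ref{lemma:checking compatibility on restriction of generator} applied to $B_m=\O_m\cup\O'_m$, for which $\Res(B_m)=B_m$.

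The bypass is in fact necessary, not just convenient. Your set $S$ need not be restriction-closed: in Example~\ref{ex:non disk-like counter example to good conjecture}, $C_{p^2}\to C_{p^2q}$ lies in $S$, but its restriction $C_p\to C_{pq}$ does not, as $J=C_q$ shows. So your initially stated plan of proving that $S$ is a transfer system could not have succeeded.
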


This shows that for any \(\O=\Oa\), there is a maximal compatible \(\Om\): the join of all compatible ones.

\begin{proposition} [{\cite[Corollary 7.85]{BiIncomplete}}] \label{prop:existence of maximal compatible pair}
For any $G$-transfer system $\O$, there exists a unique transfer system $\mc{\O}$ such that $(\O,\mc{\O})$ is maximally compatible.
\end{proposition}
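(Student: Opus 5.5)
The plan is to realise $\mc{\O}$ as the join of all transfer systems compatible with $\O$, and to get that this join is itself compatible from \cref{prop:join preserves compatibility}. First, the collection
\[
S = \{\O_m \in \Tr(G) \mid (\O,\O_m) \text{ is compatible}\}
\]
is nonempty: it contains the trivial transfer system $E$ by \cref{prop:Complete and trivial compatibility}. It is also finite, since $\Sub(G)$ is finite and so there are only finitely many binary relations on it. Both facts matter, because \cref{prop:join preserves compatibility} is a statement about binary joins only.

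Next, enumerate $S=\{\O_1,\dots,\O_n\}$ and set $\mc{\O} = \O_1\lor\cdots\lor\O_n$, using the lattice structure from \cref{prop:lattice of G-transfer systems}. An induction on $n$ with \cref{prop:join preserves compatibility} shows that $(\O,\O_1\lor\cdots\lor\O_k)$ is compatible for every $k$. The base case $k=1$ holds because $\O_1\in S$. For the inductive step, $(\O,\O_1\lor\cdots\lor\O_{k-1})$ and $(\O,\O_k)$ are both compatible, so the pair with the join $\O_1\lor\cdots\lor\O_k$ is compatible. Hence $\mc{\O}\in S$.

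By construction every $\O_m\in S$ satisfies $\O_m\subseteq\mc{\O}$, so $\mc{\O}$ is the greatest element of $S$ under containment. This is the precise sense in which $(\O,\mc{\O})$ is maximally compatible. Uniqueness is then formal: a greatest element of a poset is unique. Concretely, if $\M'$ also has the property, then $\M'\subseteq\mc{\O}$ and $\mc{\O}\subseteq\M'$, so antisymmetry of containment gives $\M'=\mc{\O}$.

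There is no serious obstacle here; the content lies in \cref{prop:join preserves compatibility}, which we are allowed to assume. The only point needing care is the definition of ``maximally compatible''. If it means merely maximal, that is, not strictly contained in another compatible partner, uniqueness still holds. Any maximal $\M'$ satisfies $\M'\subseteq \M'\lor\mc{\O}$, and this join lies in $S$ by \cref{prop:join preserves compatibility}. Maximality of $\M'$ then forces $\M'=\M'\lor\mc{\O}\supseteq\mc{\O}$, and combined with $\M'\subseteq\mc{\O}$ this gives $\M'=\mc{\O}$.
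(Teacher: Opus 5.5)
Your proposal is correct and matches the paper's approach. The paper also obtains $\mc{\O}$ as the join of all transfer systems compatible with $\O$, via \cref{prop:join preserves compatibility}; your finite induction over $\Tr(G)$ and your uniqueness argument for the greatest element simply spell out the details the paper leaves implicit.
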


Given a transfer system $\O$, $\mc{\O}$ also provides us with information about other compatible pairs $(\O,-)$.

\begin{proposition} [{\cite[Remark 7.86]{BiIncomplete}}] \label{prop:Maximal compatible characterises all compatibility}

A pair of $G$-transfer systems $(\Oa,\Om)$ is compatible if, and only if, $\Om \subseteq \mc{\Oa}$.

\end{proposition}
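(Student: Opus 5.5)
The argument has two directions, and both rest on \cref{prop:existence of maximal compatible pair}, which gives a unique transfer system $\mc{\Oa}$ such that $(\Oa,\mc{\Oa})$ is compatible and maximal among all $\Om$ compatible with $\Oa$. The one point to check is that this maximality means containment, not just maximality under inclusion. This follows from \cref{prop:join preserves compatibility}: compatible $\Om$'s are closed under binary joins, and $\Tr(G)$ is a finite lattice by \cref{prop:lattice of G-transfer systems}. So the join of all $\Om$ compatible with $\Oa$ is itself compatible with $\Oa$. It contains every compatible $\Om$, and it is therefore the $\mc{\Oa}$ of \cref{prop:existence of maximal compatible pair}.

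For the forward direction, assume $(\Oa,\Om)$ is compatible. Then $\Om$ is one of the terms in the join defining $\mc{\Oa}$, so $\Om\subseteq\mc{\Oa}$.

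For the converse, assume $\Om\subseteq\mc{\Oa}$. We check the two conditions of \cref{def:compatible pair} directly.
\begin{enumerate}
    \item Since $(\Oa,\mc{\Oa})$ is compatible, $\mc{\Oa}\subseteq\Oa$, and hence $\Om\subseteq\Oa$.
    \item Let $K,J\le H$ with $K\to H\in\Om$ and $K\cap J\to K\in\Oa$. Then $K\to H\in\mc{\Oa}$. Condition (2) for the compatible pair $(\Oa,\mc{\Oa})$ now gives $J\to H\in\Oa$.
\end{enumerate}
The point is that condition (2) is monotone in the multiplicative slot: shrinking $\Om$ only removes hypotheses, so it cannot create a failure. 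Hence any transfer system contained in a compatible one is again compatible, and this proves the converse.

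The only place needing care is the identification in the first paragraph: that the maximal element is the join of all compatible systems, so every compatible $\Om$ lies below it. Finiteness of $\Tr(G)$ together with closure under binary joins settles this. Everything else is unwinding \cref{def:compatible pair}.
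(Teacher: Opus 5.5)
Your argument is correct and matches the reasoning the paper relies on. The paper gives no proof of its own beyond citing \cite[Remark 7.86]{BiIncomplete}, right after observing via \cref{prop:join preserves compatibility} that $\mc{\Oa}$ is the join of all transfer systems compatible with $\Oa$; your converse is exactly the downward closure of \cref{def:compatible pair} in the multiplicative slot. (One small detail you left unstated: the family being joined is nonempty, since the trivial transfer system is compatible with $\Oa$ by \cref{prop:Complete and trivial compatibility}.)
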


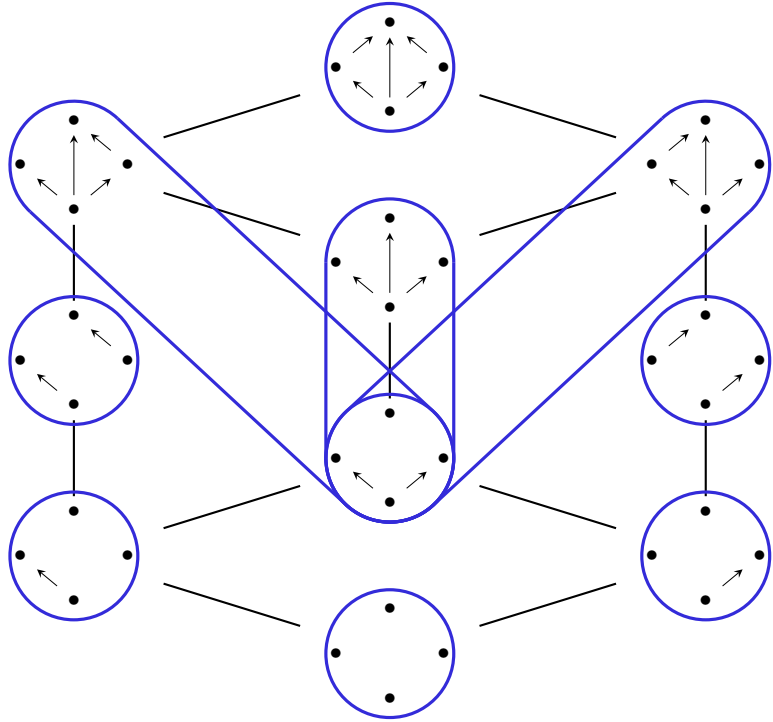
\begin{figure}[hbt!]
\begin{tikzpicture}[scale = 1, thick,node distance=2.6cm,inner sep = 0cm] 
\node(d) {\usebox{\cpqd}}; 
\node(g) [below of=d]{\usebox{\cpqg}}; 
\node(a) [above of=d]{\usebox{\cpqa}}; 
\node(j) [below of=g]{\usebox{\cpqj}}; 
\node(e) [at =($(d)!0.5!(g)$),xshift = -4.2cm] {\usebox{\cpqe}};
\node(f) [at =($(d)!0.5!(g)$),xshift = 4.2cm] {\usebox{\cpqf}};
\node(b) [at =($(a)!0.5!(d)$),xshift = -4.2cm] {\usebox{\cpqb}};
\node(c) [at =($(a)!0.5!(d)$),xshift = 4.2cm] {\usebox{\cpqc}};
\node(h) [at =($(g)!0.5!(j)$),xshift = -4.2cm] {\usebox{\cpqh}};
\node(i) [at =($(g)!0.5!(j)$),xshift = 4.2cm] {\usebox{\cpqi}};
\draw [thick] (a) to (b);
\draw [thick] (a) to (c);
\draw [thick] (b) to (d);
\draw [thick] (b) to (e);
\draw [thick] (c) to (d);
\draw [thick] (c) to (f);
\draw [thick] (d) to (g);
\draw [thick] (e) to (h);
\draw [thick] (f) to (i);
\draw [thick] (g) to (h);
\draw [thick] (g) to (i);
\draw [thick] (i) to (j);
\draw [thick] (h) to (j);
\myroundpoly[cblue, very thick]{b,g}{0.85cm};
\myroundpoly[cblue, very thick]{c,g}{0.85cm};
\myroundpoly[cblue, very thick]{d,g}{0.85cm};
\draw[cblue, very thick] (a) circle (0.85cm);
\draw[cblue, very thick] (e) circle (0.85cm);
\draw[cblue, very thick] (f) circle (0.85cm);
\draw[cblue, very thick] (g) circle (0.85cm);
\draw[cblue, very thick] (h) circle (0.85cm);
\draw[cblue, very thick] (i) circle (0.85cm);
\draw[cblue, very thick] (j) circle (0.85cm);
\end{tikzpicture}
\caption{
The Hasse diagram of the $C_{pq}$-transfer systems ordered by containment, $p$ and $q$ distinct primes.
Each transfer system $\O$ is paired with its maximal compatible transfer system $\mc{\O}$ via a blue circling.
}
\label{fig:Cpq maximal compatible pairs example}
\end{figure}

\section{Characterizing the Maximal Compatible Transfer System}\label{sec:max compatible}

\subsection{Maximal Compatible Formulae}

\begin{proposition}\label{prop:set expression of maximal compatible pair}
For any $G$-transfer system $\O$, a transfer \(K\xrightarrow{e}H\) is in \(\mc{\O}\) if and only if 
\[
\big(\O,T(e)\big)
\]
is compatible.
\end{proposition}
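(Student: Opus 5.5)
The proof is a direct application of \cref{prop:Maximal compatible characterises all compatibility}, together with the observation that $T(e)$ is the smallest transfer system containing $e$. We argue each direction separately.

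For the direction ($\Leftarrow$), suppose $(\O,T(e))$ is compatible. By \cref{prop:Maximal compatible characterises all compatibility}, this gives $T(e)\subseteq \mc{\O}$. Since $e\in T(e)$ by \cref{def:generated tranfer system}, we conclude $e\in\mc{\O}$.

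For the direction ($\Rightarrow$), suppose $e\in\mc{\O}$. Since $\mc{\O}$ is a $G$-transfer system containing $e$, and $T(e)$ is the intersection of all $G$-transfer systems containing $e$, we have $T(e)\subseteq\mc{\O}$. Applying \cref{prop:Maximal compatible characterises all compatibility} again, this inclusion is equivalent to $(\O,T(e))$ being compatible.

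No step here is a real obstacle. The only point needing care is that compatibility is downward closed in the multiplicative slot, and this is exactly what \cref{prop:Maximal compatible characterises all compatibility} supplies. One could also check downward closure directly from \cref{def:compatible pair}: both conditions there only become weaker when $\Om$ is shrunk.
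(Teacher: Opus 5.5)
Your proof is correct and is essentially the paper's argument. The forward direction is identical. For the converse, the paper forms the join $\bigvee_{e\in S}T(e)$ over all $e$ with $(\O,T(e))$ compatible, invokes \cref{prop:join preserves compatibility}, and then uses maximality of $\mc{\O}$; you instead apply \cref{prop:Maximal compatible characterises all compatibility} to each $T(e)$ individually, which relies on the same maximality and simply drops the redundant join step.
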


\begin{proof}
If $f\in \mc{\O}$ then $T(f) \subseteq \mc{\O}$ so $(\O,T(f))$ must be compatible. 
For readability, let $S = \{e \in \O \, | \, (\O, T(e)) \text{ is compatible}\}$.
In the other direction, we observe that 
\begin{align*}
    S \subseteq \underset{e \in S}{\bigvee} T(e).
\end{align*}
Since \Cref{prop:join preserves compatibility} shows that compatibility is closed under joins, then we know that $(\O,\underset{e \in S}{\bigvee} T(e))$ is a compatible pair, therefore $S \subseteq \underset{e \in S}{\bigvee} T(e) \subseteq \mc{\O}$. 
\end{proof}

When checking compatibility with \(T(e)\), it actually suffices to check a much smaller collection of edges.

\begin{lemma}\label{lem: Compatibility of conjugates}
    For any \(s\in\Oa\), if \(s\) is compatible with \(\Oa\), then so is any conjugate of \(s\).
\end{lemma}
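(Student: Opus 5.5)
The lemma is a direct consequence of the conjugation axiom, and I would prove it under both readings of ``\(s\) is compatible with \(\Oa\)''. One reading is that \((\Oa,T(s))\) is a compatible pair, as in \cref{prop:set expression of maximal compatible pair}. The other is that \(s\) satisfies condition (2) of \cref{def:compatible pair} when placed in the thick slot \(K\to H\) of \cref{eq:CompatDiag}. Throughout, write \(s=(K\to H)\) and fix \(g\in G\), with conjugate \(s^g = (gKg^{-1}\to gHg^{-1})\).

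For the first reading, the plan is to show that \(T(s^g)=T(s)\). By \cref{prop:transfer system generated by relations}, \(T(s)\) contains \(\Conj(\{s\})\), so \(s^g\in T(s)\). Since \(T(s)\) is a transfer system containing \(s^g\), minimality gives \(T(s^g)\subseteq T(s)\). Because \(s=(s^g)^{g^{-1}}\), the same argument gives the reverse inclusion. Hence \((\Oa,T(s^g))=(\Oa,T(s))\) is compatible. Condition (1) also holds, since \(s\in\Oa\) implies \(s^g\in\Oa\) by conjugation closure of \(\Oa\).

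For the second, edgewise reading, I would verify condition (2) for \(s^g\) directly. Take \(J'\leq gHg^{-1}\) with \(gKg^{-1}\cap J'\to gKg^{-1}\) in \(\Oa\), and set \(J=g^{-1}J'g\leq H\). Conjugation commutes with intersections, so \(gKg^{-1}\cap J' = g(K\cap J)g^{-1}\). Applying the conjugation axiom of \(\Oa\) with \(g^{-1}\) gives \(K\cap J\to K\) in \(\Oa\). Compatibility of \(s\) then yields \(J\to H\) in \(\Oa\), and conjugating by \(g\) gives \(J'\to gHg^{-1}\) in \(\Oa\), as required.

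There is no real obstacle here. The only care needed is tracking the direction of conjugation (\(g\) versus \(g^{-1}\)) and noting that conjugation by \(g\) is a lattice automorphism of \(\Sub(G)\), so it preserves both inclusions and intersections.
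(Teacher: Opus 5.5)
Your proposal is correct, and your second (edgewise) argument is exactly the paper's proof: conjugate the compatibility diagram for $s^g$ by $g^{-1}$, apply compatibility of $s$ to obtain $g^{-1}J'g\to H$ in $\Oa$, and conjugate back using conjugation closure of $\Oa$. The edgewise reading is the one the paper intends, since it is what \cref{lemma:checking compatibility on restriction of generator} needs, so your first argument via $T(s^g)=T(s)$ is valid but not needed.
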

\begin{proof}
    Assume \(s\) is compatible with \(\Oa\), and let \(g\in G\). Conjugating the diagram

\[
    \begin{tikzcd}[column sep = tiny, row sep = tiny]
	& H^g \\
	K^g & & J \\
	& {K^g \cap J}
	\arrow["{gsg^{-1}}", from=2-1, to=1-2]
	\arrow["\Oa"', draw={rgb,255:red,214;green,92;blue,92}, dashed, from=2-3, to=1-2]
	\arrow["\Oa", draw={rgb,255:red,214;green,92;blue,92}, from=3-2, to=2-1]
	\arrow[from=3-2, to=2-3]
    \end{tikzcd} 
\]
by \(g^{-1}\) yields the diagram
\[
    \begin{tikzcd}[column sep = tiny, row sep = tiny]
	& H \\
	K && g^{-1}Jg. \\
	& {K \cap g^{-1}Jg}
	\arrow["s", from=2-1, to=1-2]
	\arrow["\Oa"', draw={rgb,255:red,214;green,92;blue,92}, dashed, from=2-3, to=1-2]
	\arrow["\Oa", draw={rgb,255:red,214;green,92;blue,92}, from=3-2, to=2-1]
	\arrow[from=3-2, to=2-3]
    \end{tikzcd} 
    \]
Since \(s\) is compatible with \(\Oa\), we have that \(g^{-1}Jg\to H\) must be in \(\Oa\), and since \(\Oa\) is conjugation invariant, this gives \(J\to gHg^{-1}\) is in \(\Oa\) as desired.
\end{proof}

\begin{lemma}\label{lem: Compatibility of Composites}
    Given two arrows in \(\Oa\), \(I\xrightarrow{t} K\) and \(K\xrightarrow{s} H\), if \(s\) and \(t\) are both compatible with \(\Oa\), then so is the composite \(I\xrightarrow{s\circ t} H\).
\end{lemma}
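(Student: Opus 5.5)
We must show: given $J\le H$ with $I\cap J\to I$ in $\Oa$, then $J\to H$ is in $\Oa$. Here "an arrow $s\colon K\to H$ is compatible with $\Oa$" means the single-arrow instance of condition (2) in \cref{def:compatible pair}: for every $J\le H$, if $K\cap J\to K\in\Oa$ then $J\to H\in\Oa$. The idea is to peel off the composite in two stages, first using $t$ and then $s$. The intermediate subgroup is $K\cap J$, which lies below $K$ and so is a legitimate test subgroup for $t$.

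Fix $J\le H$ with $I\cap J\to I$ in $\Oa$. Note that $I\cap(K\cap J)=I\cap J$, since $I\le K$.

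In the first step we apply compatibility of $t\colon I\to K$ with test subgroup $J' := K\cap J\le K$. The hypothesis required is $I\cap J'\to I$ in $\Oa$, which is exactly the given $I\cap J\to I$. Compatibility of $t$ therefore yields $K\cap J\to K$ in $\Oa$.

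In the second step we apply compatibility of $s\colon K\to H$ with test subgroup $J\le H$. The hypothesis $K\cap J\to K\in\Oa$ was just obtained, so compatibility of $s$ gives $J\to H\in\Oa$, as required.

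We also note that $s\circ t\in\Oa$ by composition, so the composite is a genuine arrow of $\Oa$.

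The only point needing care is the definitional one: making sure "compatible" for a single arrow is read as the condition-(2) instance above, rather than as compatibility with all of $T(s)$. If the intended meaning were instead "$(\Oa,T(s))$ is compatible", the same argument still works. In that reading, the arrows of $T(s\circ t)$ are generated from $s\circ t$ and its conjugates and restrictions, and we would combine \cref{lem: Compatibility of conjugates} with the analogous restriction statement. The core computation is the two-step chase above.
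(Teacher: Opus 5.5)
Your proof is correct and is essentially the paper's two-step chase. Compatibility of $t$, applied to the test subgroup $K\cap J\le K$ (using $I\cap(K\cap J)=I\cap J$), gives $K\cap J\to K\in\Oa$, and then compatibility of $s$ gives $J\to H\in\Oa$; your closing remark about the $T(s)$ reading is unnecessary, since the paper uses the single-arrow form of condition (2).
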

\begin{proof}
    Consider the diagram
    \[
    \begin{tikzcd}[column sep = tiny, row sep = tiny]
    &&  H\\
    & \phantom{J}K\phantom{J} && \phantom{J}J \phantom{J}. \\
    \phantom{J}I\phantom{J} && {K \cap J}  \phantom{J }\\
    & {I \cap J}
    \arrow["s", from=2-2, to=1-3]
    \arrow["t", from=3-1, to=2-2]
    \arrow[dashed, from=2-4, to=1-3,"\Oa"', draw={rgb,255:red,214;green,92;blue,92}]
    \arrow[dashed, from=3-3, to=2-2,"\Oa"', draw={rgb,255:red,214;green,92;blue,92}]
    \arrow[from=3-3, to=2-4]
    \arrow[from=4-2, to=3-1,"\Oa", draw={rgb,255:red,214;green,92;blue,92}]
    \arrow[from=4-2, to=3-3]
    \end{tikzcd}
    \]
    Since \(t\) is compatible with \(\Oa\), we have that \(K\cap J\to K\) is in \(\Oa\). Since \(s\) is compatible with \(\Oa\), the upper square shows that we have that \(J\to H\) is also in \(\Oa\).
\end{proof}

\begin{lemma}\label{lemma:checking compatibility on restriction of generator}
    Let $\Oa$ and $\Om$ be $G$-transfer systems, and $B_m$ a  binary relation such that $\Om = T(B_m)$. The pair $(\Oa, \Om)$ is compatible if and only if $\Res(B_m)$ is compatible with \(\Oa\). 
\end{lemma}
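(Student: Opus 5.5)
The forward direction is immediate: if $(\Oa,\Om)$ is compatible, then every edge of $\Om$ is compatible with $\Oa$. Since $\Res(B_m)\subseteq T(B_m)=\Om$ (restriction closure only produces edges forced by the restriction axiom), every edge of $\Res(B_m)$ is compatible with $\Oa$. Here ``an edge $s\colon K\to H$ is compatible with $\Oa$'' means $s\in\Oa$, and for every $J\le H$ with $K\cap J\to K$ in $\Oa$ we have $J\to H$ in $\Oa$.

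For the converse, I will use the description $T(B_m)=\Comp(\Conj(\Res(\Refl(B_m))))$ from \cref{prop:transfer system generated by relations}. This order of closures is the convenient one because compatibility is checked edge by edge. Compatibility of a pair just says $\Om\subseteq\Oa$ and that each edge of $\Om$ is compatible with $\Oa$. So it suffices to show that the set of edges compatible with $\Oa$ contains $\Res(\Refl(B_m))$ and is closed under conjugation and composition.

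The steps run as follows.
\begin{enumerate}
\item Reflexive edges $H\to H$ are compatible. Given $J\le H$ with $H\cap J=J\to H$ in $\Oa$, the required edge $J\to H$ is exactly that hypothesis. Moreover, $\Res$ of a reflexive edge is again an inclusion $L\to L$, which is reflexive. Hence $\Res(\Refl(B_m))=\Refl(\Res(B_m))$ up to reflexive edges, and it consists of compatible edges.
\item By \cref{lem: Compatibility of conjugates}, conjugates of compatible edges are compatible. So $\Conj(\Res(\Refl(B_m)))$ consists of compatible edges. These edges lie in $\Oa$ because $\Oa$ is closed under conjugation.
\item By \cref{lem: Compatibility of Composites}, applied inductively along chains $H_0\to\cdots\to H_n$, composites of compatible edges are compatible. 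These composites lie in $\Oa$ because $\Oa$ is closed under composition.
\end{enumerate}
Combining these steps, every edge of $\Om$ lies in $\Oa$ and is compatible with $\Oa$, so $(\Oa,\Om)$ is compatible.

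The only point needing care is the bookkeeping of the definition of ``edge compatible with $\Oa$''. Condition (1) of \cref{def:compatible pair}, namely $\Om\subseteq\Oa$, must be carried along with the condition on edges. This is why I include membership in $\Oa$ in the edge-wise notion. That membership is preserved by conjugation and composition because $\Oa$ is a transfer system, which is the reason for applying $\Res$ first so that membership is verified on the hypothesis set itself.

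I expect no genuine obstacle. The substantive inputs are the two preceding lemmas, which already handle the closures where compatibility could plausibly fail.
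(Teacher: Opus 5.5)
Your proof is correct and follows the same route as the paper. Both use the closure formula $T(B_m)=\Comp(\Conj(\Res(\Refl(B_m))))$ and push edgewise compatibility through reflexive edges, conjugation (\cref{lem: Compatibility of conjugates}) and composition (\cref{lem: Compatibility of Composites}); you also make explicit that membership in $\Oa$ is preserved, which the paper leaves implicit.
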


Note that $\Res(B_m)$ is not a transfer system in general.

\begin{proof}
One direction is immediate. For the other, we need to show that 
\[
    \Refl\big(\Comp\big(\Conj\big(\Res(B_m)\big)\big)\big)
\]
is compatible with \(\Oa\). The reflexive condition is immediate. Lemma~\ref{lem: Compatibility of Composites} and then Lemma~\ref{lem: Compatibility of conjugates} then show this is compatible with \(\Oa\) if 
\(
    \Res(B_m)\big),
\)
as desired.

\end{proof}

We can use this lemma to obtain the following improved formulae. 

\begin{proposition}\label{prop:computing Om via restricting edges}
    Let $\O$ be a $G$-transfer system.
    The maximal compatible transfer system can be written as the following set:
     \begin{gather}
         \label{eqn:M(O) formula}
         \begin{split}
     \mc{\O}= \{ K \to H \in \O \, \, |&\text{  for any }I < J \leq H,\\
     &\text{ if }K \cap I \to K \cap J \in \O\text{ then }I \to J \in \O\}.
     \end{split}
     \end{gather}
    Its complement, $\mc{\O}^c \coloneqq \O\backslash \mc{\O},$ can be written as:    
     \begin{gather*}
         \begin{split}
      \mc{\O}^c = \{ K \to H \in \O \, \, |&\text{  there exist }I < J \leq H\text{, such that }\\&K \cap I \to K \cap J \in \O\text{ and }I \to J \notin \O\}.
     \end{split}
     \end{gather*}
\end{proposition}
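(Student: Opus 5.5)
By \cref{prop:set expression of maximal compatible pair}, a transfer $e = (K\to H)\in\O$ lies in $\mc{\O}$ if and only if $(\O,T(e))$ is compatible. By \cref{lemma:checking compatibility on restriction of generator} applied with $B_m=\{e\}$, this happens if and only if every edge of $\Res(\{e\})$ satisfies condition (2) of \cref{def:compatible pair} with respect to $\O$. So the plan is to unwind what ``every restriction of $e$ is compatible with $\O$'' means, and rewrite it in the form of \eqref{eqn:M(O) formula}. The description of the complement $\mc{\O}^c$ is then just the logical negation of the membership condition, intersected with $\O$.

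The restrictions of $e$ are exactly the edges $K\cap J\to J$ for $J\leq H$. Fix such a $J$, and write the compatibility condition for the edge $K\cap J\to J$. Condition (2), with this edge as the multiplicative arrow and a subgroup $I\leq J$, says: if $(K\cap J)\cap I\to K\cap J$ is in $\O$, then $I\to J$ is in $\O$. Since $I\leq J$, we have $(K\cap J)\cap I=K\cap I$. Hence the restricted edge is compatible precisely when, for all $I\leq J$, $K\cap I\to K\cap J\in\O$ implies $I\to J\in\O$.

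Ranging over all $J\leq H$ gives exactly the quantifier ``for any $I\leq J\leq H$'' in \eqref{eqn:M(O) formula}. The case $I=J$ is vacuous by reflexivity, so strict inequality $I<J$ may be assumed. This proves both directions at once, and the condition $e\in\O$ is needed because $\mc{\O}\subseteq\O$.

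The main point requiring care is matching the roles in \cref{def:compatible pair}. There, the multiplicative edge is $K'\to H'$ and the test subgroup $J'$ is arbitrary in $H'$. Here $K'=K\cap J$, $H'=J$, $J'=I$, and one must check that condition (2) only involves $J'\leq H'$; it does, by the definition. One must also confirm that $\Res(\{e\})$ as defined, namely $\{L\cap K\to L : L\leq H\}$, consists exactly of these edges; this holds, so no further obstacle is expected.
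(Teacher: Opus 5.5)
Your proposal is correct and follows the paper's proof: combine \cref{prop:set expression of maximal compatible pair} with \cref{lemma:checking compatibility on restriction of generator} for $B_m=\{e\}$, unwind the compatibility condition for each restriction $K\cap J\to J$ using $(K\cap J)\cap I=K\cap I$, and discard the case $I=J$ by reflexivity. Your explicit matching of the roles $K'=K\cap J$, $H'=J$, $J'=I$ in \cref{def:compatible pair} is just a more detailed version of the step the paper leaves implicit.
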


This proposition tells us that whenever we have a diagram of the following form in $\O$,
\[
\begin{tikzcd}[column sep = tiny, row sep = tiny]
&  H & &\\
K &  & J &  \\
 & {K\cap J} & & I, \\
& & {K \cap I}
\arrow[very thick, from=2-1, to=1-2]
\arrow[very thick, from=3-2, to=2-3]
\arrow[from=3-4, to=2-3, dashed,"{\not \exists}"', red]
\arrow[from=4-3, to=3-2]
\arrow[from=4-3, to=3-4]
\arrow[from=4-3, to=2-3, very thick]
\end{tikzcd}
\]
all of the bold edges are in $\mc{\O}^c$, as they all restrict into a compatibility failure.
See \cref{ex:non disk-like counter example to good conjecture} for a concrete realization of this diagram in the group $C_{p^2q}$.

\begin{proof}
From \cref{prop:set expression of maximal compatible pair}, we know that
\begin{align*}
     \mc{\O} = \{K\overset{e}{\to} H \in \O \, | \, (\O, T(e)) \text{ is a compatible pair}\}.
\end{align*}
\cref{lemma:checking compatibility on restriction of generator} allows us to check the compatibility of $(\O, T(e))$ by checking if $\Res(e)$ is compatible with \(\O\). 
Thus, 
 \begin{center}
      $ \mc{\O} = \{ K \to H \in \O \, | \,$  for any $I \leq J \leq H,$ if $K \cap I \to K \cap J \in \O$ then $I \to J \in \O$\}, 
\end{center}
and observe that if $I=J$ then we have $I\to J \in \O$, thus we only need to check for $I \lneq J$.
The complement can then be computed directly. 
\end{proof}

\begin{corollary}\label{cor:an edge with no restrictions is in the maximal compatible}
For any $G$-transfer system $\O$, if all restrictions of $e\in \O$ are reflexive, then $e\in \mc{\O}$.
\end{corollary}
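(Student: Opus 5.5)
We apply the formula for $\mc{\O}$ from \cref{prop:computing Om via restricting edges} directly. Let $e\colon K\to H$ be a transfer in $\O$ all of whose restrictions are reflexive. Take any $I<J\leq H$ with $K\cap I\to K\cap J\in\O$. We must show $I\to J\in\O$, so that the defining condition \eqref{eqn:M(O) formula} holds for $e$.

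The key observation concerns the restriction of $e$ along $J\leq H$. By the restriction axiom it is the transfer $K\cap J\to J$, which lies in $\O$. By hypothesis this restriction is reflexive, so $K\cap J=J$, that is, $J\leq K$. Since $I<J\leq K$, we also get $K\cap I=I$. The assumed transfer $K\cap I\to K\cap J$ is therefore literally $I\to J$, and it lies in $\O$. So the implication in \eqref{eqn:M(O) formula} holds for every such pair $(I,J)$, and hence $e\in\mc{\O}$.

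There is essentially no obstacle here. The only point needing care is to read ``restrictions of $e$'' as the transfers $K\cap L\to L$ for $L\leq H$, as in the definition of $\Res$. Under that reading, reflexivity of each restriction means exactly $K\cap L=L$ for all $L\leq H$. Taking $L=H$ forces $K=H$, so $e$ is itself reflexive, and the argument above is just the general verification specialised to this case. If instead ``restrictions'' excludes $e$ itself, i.e.\ only $L<H$ is considered, the argument still works. The case $J=H$ uses $K\cap H=K$: then $K\cap I\to K$ in $\O$ with $I<H$. Here $K\cap I\to K\cap I$ is the restriction of $e$ along $I$, which is reflexive. The case $J<H$ is handled exactly as above. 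In the case $J=H$, though, what we need is to deduce $I\to H$ from $K\cap I\to K$, and I would check it via the $J\le K$ trick only when it applies, falling back on the convention that $e$ counts among its own restrictions. I would state that convention explicitly at the start of the proof.
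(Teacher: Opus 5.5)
Your main argument is the same as the paper's: restrict $e$ along $J$, get $J\le K$, and then $K\cap I\to K\cap J$ is literally $I\to J$. The trouble is the case $J=H$, which you flag but do not close.

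Under the convention you fall back on ($e$ counts among its own restrictions), the hypothesis forces $K\cap H=H$. So $e$ is reflexive, and the corollary degenerates to ``reflexive transfers lie in $\mc{\O}$.'' That is not how the statement is used. In \cref{thm:r' can be assumed to be in M(O) 2}, in \cref{lem: induction so don't need notin}, and in the initialization $\mc{\O}\coloneqq\min\O$ of the algorithm, it is applied to a non-reflexive $e\colon K\to H$ whose \emph{strict} restrictions $r<e$ are all reflexive. These are exactly the restrictions along $L<H$. For such $e$ your ``$J\le K$ trick'' never applies when $J=H$, because $K<H$. So your claim that ``the argument still works'' under this reading is unproved at exactly that case. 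Adopting the other convention only changes the statement; it does not prove it. (The paper's own proof asserts $K\cap J=J$ for all $J\le H$, so it glosses over the same case.)

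The missing step is the composition axiom. Take $I<J=H$. The restriction of $e$ along $I$ is $K\cap I\to I$, not ``$K\cap I\to K\cap I$'' as you wrote. It is strict because $I<H$, hence reflexive, so $K\cap I=I$. The hypothesis $K\cap I\to K\cap H$ in \eqref{eqn:M(O) formula} therefore reads $I\to K\in\O$. Composing with $e\colon K\to H$ gives $I\to H\in\O$, as required. Together with your treatment of $J<H$, this proves the corollary in the form the paper actually needs.
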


\begin{proof}
Suppose there exists an transfer $e\colon K \to H \in \O$ such that all restrictions of $e$ are reflexive, and consider subgroups $I<J\leq H$.
As $e$ only admits reflexive restrictions, then $K\cap I = I$ and $K\cap J = J$. 
This implies that $I,J \leq K$.
By \Cref{eqn:M(O) formula}, the condition for $e$ to be in $\mc{\O}$ is that if $K \cap I = I \to K \cap J = J \in \O$, then $I \to J \in \O$, and this is vacuously true.
Therefore $e \in \mc{\O}$.
\end{proof}

Given the number of subgroups present in these formulae, we now introduce some notation that simplifies their presentation.

\begin{definition}\label{def:compatibility successes and failures}
Let $r$ and $e$ be transfers in a $G$-transfer system $\O$.
If $e$ restricts onto $r$, then we write $r \leq e$.
If $e$ non-reflexively restricts into $r$, then we write $r< e$.
If $r<e$ is a covering relation of the restriction poset, we write $r \prec e$. If $e:K \to H$, and $r:K\cap J \to J$ are transfers in $\O$ such that $r \leq e$, then we say that $r \leq e$ is a 
\begin{itemize}
    \item \defn{compatibility success}, denoted $r \resSuccess e$, if $K\cap J \to K \in \O$ implies $J\to H \in \O$,
    \item \defn{compatibility failure}, denoted $r \resFailure e$, if $K\cap J \to K \in \O$ and $J\to H \not \in \O$,
    \item \defn{covered compatibility success}, denoted $r \coverSuccess e$, if $r \resSuccess e$ and $r \prec e$, and
    \item \defn{covered compatibility failure}, denoted $r \coverFailure e$, if $r \resFailure e$ and $r \prec e$.
\end{itemize}

\end{definition}

For example, the expressions of \cref{prop:computing Om via restricting edges} simplify to
\begin{align}
    \mc{\O} &= \{ e \in \O \, | \, \forall r \leq e, \text{if } r' < r \text{ then $r' <^S r$}\}, \text{ and}\label{eq:expression for maximal compatible in terms of edges not subgroups}\\
    \mc{\O}^c &= \{ e \in \O \, |  \, \exists r \leq e, \text{ such that } \exists r' < r \text{ with $r' <^F r$}\}.\label{eq:expression for complement in terms of edges not subgroups}
\end{align}

We now refine our characterization of the maximal compatible transfer system in a manner that lets us recursively construct it using \cref{cor:an edge with no restrictions is in the maximal compatible}.
That is to say, starting with those transfers which restrict into no non-trivial transfers, and hence are trivially in the maximal compatible transfer system, we can construct the maximal transfer system from the bottom up.
Note that this recursion does terminate as \cref{eq:expression for maximal compatible in terms of edges not subgroups,eq:expression for complement in terms of edges not subgroups} contain strict inequalities, and every strict chain of inequalities in a finite poset terminates.

\begin{theorem}\label{thm:r' can be assumed to be in M(O) 2}
    Let $\O$ be a $G$-transfer system.
    The maximal compatible transfer system can be written as the following set 
    built by a recursion on $e$ starting with the transfers in $\O$ which do not restrict to any non-trivial transfers:
    \begin{align}\label{eq:max compatible with rhs in max compatible 2}
        \mc{\O} = \{ e \in \O \, | \, \forall r<e, r\in M(\mathcal O) \text{ and } r<^S e\},
    \end{align}
    where every restriction is in $\O$. 
    Its complement, $\mc{\O}^c \coloneqq \O\backslash \mc{\O},$ can be written as:
    \begin{align}\label{eq:complement with rhs in max compatible 2}
        \mc{\O}^c = \{ e \in \O \, | \, \exists r<e, r\not \in M(\mathcal O) \text{ or } r<^F e\}.
    \end{align}
\end{theorem}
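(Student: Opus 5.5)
We prove the equality \eqref{eq:max compatible with rhs in max compatible 2} by showing both inclusions, starting from the characterization \eqref{eq:expression for maximal compatible in terms of edges not subgroups}. That characterization says $e\in\mc{\O}$ iff for every $r\le e$ and every $r'<r$ we have $r'<^S r$. The complement formula \eqref{eq:complement with rhs in max compatible 2} then follows by negation, so only the first equality needs proof. The recursion is well-founded because the restriction poset is finite and the condition only refers to $r$ strictly below $e$.

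\textbf{Inclusion $\subseteq$.} Let $e\in\mc{\O}$ and $r<e$.
\begin{itemize}
\item Taking the pair $(e,r)$ in \eqref{eq:expression for maximal compatible in terms of edges not subgroups} (with $e\le e$) gives $r<^S e$.
\item To see $r\in\mc{\O}$, first check that the restriction relation is transitive. If $e\colon K\to H$ restricts to $r\colon K\cap J\to J$, and $r$ restricts to $r'\colon K\cap J\cap J'\to J'$ with $J'\le J$, then $r'$ is the restriction of $e$ to $J'$.
\item Hence every $s\le r$ also satisfies $s\le e$. The condition in \eqref{eq:expression for maximal compatible in terms of edges not subgroups} for $r$ is therefore a sub-collection of the conditions for $e$, so $r\in\mc{\O}$.
\end{itemize}
Alternatively, $\mc{\O}$ is a transfer system, so it is closed under restriction.

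\textbf{Inclusion $\supseteq$.} Suppose $e\in\O$, and for all $r<e$ we have $r\in\mc{\O}$ and $r<^S e$. We must verify: for every $r\le e$ and every $r'<r$, $r'<^S r$.
\begin{itemize}
\item \emph{Case $r=e$.} Then $r'<e$, and the hypothesis gives $r'<^S e$ directly.
\item \emph{Case $r<e$.} Then $r\in\mc{\O}$ by hypothesis. Applying \eqref{eq:expression for maximal compatible in terms of edges not subgroups} to $r$ (using $r\le r$) gives $r'<^S r$ for every $r'<r$.
\end{itemize}
So $e\in\mc{\O}$.

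The phrase ``where every restriction is in $\O$'' is automatic, since $\O$ is closed under restriction. The only point needing care is the transitivity of the restriction relation, including when the intermediate restriction is reflexive. Writing restriction along subgroups as above handles this: $r'=e$ restricted to $J'$, whether or not any step is reflexive. Once that is checked, both inclusions are immediate, and I expect no real obstacle.
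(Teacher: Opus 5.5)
Your proof is correct and follows the paper's argument: both start from \eqref{eq:expression for maximal compatible in terms of edges not subgroups} and prove the two inclusions directly, obtaining the reverse inclusion by applying the defining condition to the intermediate restriction $r$ (or using the hypothesis on $e$ itself when $r=e$). The only difference is that you read $\mc{\O}$ on the right-hand side as the actual maximal compatible system and use its closure under restriction, whereas the paper compares against the recursively defined set; your well-foundedness remark, which makes the recursive set the unique solution of that equation, bridges the two readings.
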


\begin{proof}
Let $\mc{\O}$ be the formula of \cref{eq:expression for maximal compatible in terms of edges not subgroups}, and let $\mc{\O}'$ be the formula of \cref{eq:max compatible with rhs in max compatible 2}.
We will show these formulae coincide, which yields the equality of their complements by negation.
We first observe that $\mc{\O} \subseteq \mc{\O}'$ as the condition defining $\mc{\O}'$ is a priori weaker that of $\mc{\O}$.

In the other direction, let $e \in \mc{\O}'$, and let $r',r$ be arbitrary, non-reflexive restrictions in $\O$ such that $r'<r\leq e$.
If there exists no such non-trivial $r'$ then $e$ is vacuously in $\mc{\O}$ by \cref{cor:an edge with no restrictions is in the maximal compatible}.
Since $r'$ and $r$ are restrictions of $e$ then $r,r'\in \mc{\O}'$, or else $e \notin \mc{\O}'$ by definition of $\mc{\O}'$.
Then $r' <^S r$, as $r' < r$ and $r \in \mc{\O}'$.
Therefore $e \in \mc{\O}$ by \cref{eq:expression for maximal compatible in terms of edges not subgroups}, and $\mc{\O}' = \mc{\O}$. \end{proof}

\subsection{Consequences and Examples}

We now characterize maximal compatible transfer systems, and deduce further properties using the formulae of the previous section.

\begin{proposition}\label{prop:maximal compatible element is saturated}
For any $G$-transfer system $\O$, $\mc{\O}$ is always saturated.
\end{proposition}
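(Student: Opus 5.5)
The plan is to check saturation of $\mc{\O}$ directly from the subgroup-level description \eqref{eqn:M(O) formula} in \cref{prop:computing Om via restricting edges}. Fix $L \le K \le H$ with $L \to H \in \mc{\O}$. We must show $K \to H \in \mc{\O}$. This requires two things: first, that $K \to H \in \O$; second, that for all $I < J \le H$ with $K \cap I \to K \cap J \in \O$, we have $I \to J \in \O$.

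For the first point, apply the compatibility condition of $(\O,\mc{\O})$ to the transfer $L \to H \in \mc{\O}$ with $J = K$. Since $L \cap K = L$ and $L \to K \in \O$ (restriction of $L \to H$ to $K$, noting $\mc{\O} \subseteq \O$), the formula gives $K \to H \in \O$. Equivalently, this is the case $I = L$, $J = K$ of \eqref{eqn:M(O) formula} applied to $L \to H$.

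For the second point, take $I < J \le H$ with $K \cap I \to K \cap J \in \O$. We want to feed a suitable pair into the condition for $L \to H$, and the natural choice is the pair $(I,J)$ itself. We need $L \cap I \to L \cap J \in \O$. Since $L \le K$, we have $L \cap I = L \cap (K \cap I)$ and $L \cap J = L \cap (K \cap J)$. Restricting $K \cap I \to K \cap J$ along the subgroup $L \cap J \le K \cap J$ yields $(K \cap I) \cap (L \cap J) \to L \cap J$, that is, $L \cap I \to L \cap J$, which lies in $\O$. If $L \cap I = L \cap J$ this arrow is reflexive, and it still lies in $\O$. In either case the condition for $L \to H \in \mc{\O}$ (stated for $I \le J$, the reflexive case being harmless) gives $I \to J \in \O$. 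Hence $K \to H \in \mc{\O}$.

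I expect no real obstacle. The only delicate step is confirming that the restriction axiom produces exactly $L \cap I \to L \cap J$, using $L \le K$, and noting that \eqref{eqn:M(O) formula} only requires $K \cap I \to K \cap J$ to be in $\O$, not to be non-reflexive.
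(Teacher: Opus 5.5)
Your proof is correct and is essentially the paper's argument run directly rather than by contradiction. The paper likewise gets the intermediate transfer into $\O$ from the compatibility square with $K\cap J=K$, and then restricts the intermediate-level arrow one level down (your step producing $L\cap I\to L\cap J$) to move a compatibility failure of the intermediate edge onto the lower edge.

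One small slip: the instance of \eqref{eqn:M(O) formula} that yields $K\to H\in\O$ is $I=K$, $J=H$, using the reflexive arrow $L\cap K\to L\cap H$; the instance $I=L$, $J=K$ only gives $L\to K$. Similarly, in the compatibility square the hypothesis you need is the reflexive $L\cap K\to L$, not $L\to K$.
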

\begin{proof}
If $\mc{\O}$ is not saturated, then it contains a transfer $K \to H$ such that there exists a subgroup $K < J < H$ with $J \to H \notin \mc{\O}$.
If $J \to H$ is not in $\O$ then $(\O, \mc{\O})$ is not compatible, to see this consider \ref{eq:CompatDiag} for the case when $K \cap J = K$.
Therefore $J \to H$ is in $\O$ and not in $\mc{\O}$, and therefore $J \to H \in \mc{\O}^c$.
\cref{prop:computing Om via restricting edges} then implies that $J \to H$ must restrict into a compatibility failure.
However, we observe in the following diagram of edges in $\O$ that this implies $K\to H \in \mc{\O}^c$ as it also restricts into a compatibility failure, a contradiction. 

\[\begin{tikzcd}[column sep = tiny, row sep = tiny]
&&  H\\
& J  && \phantom{X\cap}X \phantom{X\cap}\\
\phantom{X\cap}K && {J \cap X} && I  \\
& {K \cap X} && { J \cap I}\\
&& {K \cap I}
\arrow[from=2-2, to=1-3]
\arrow[from=3-1, to=2-2]
\arrow[from=3-1, to=1-3, bend left]
\arrow[from=3-3, to=2-4]
\arrow[from=4-2, to=3-3]
\arrow[from=4-2, to=2-4, bend left]
\arrow[from=4-4, to=3-5]
\arrow[from=5-3, to=4-4]
\arrow[from=5-3, to=3-5, bend right]
\arrow[dashed, from=5-3, to=4-2, dashed]
\arrow[dashed, from=4-4, to=3-3, dashed]
\arrow[from=3-5, to=2-4, dashed,"\not \exists"', red]
\end{tikzcd}\]
Here $J \cap I \to J \cap X$ must be in $\O$ to have a compatibility failure, but since $\O$ is a transfer system, the edge $K \cap I \to K \cap X$ must also be in $\O$ by restriction. \end{proof}

The following is a well known result which we prove using $\M(-)$.

\begin{corollary}
\label{cor:hull}
    Let $(\Oa,\O_m)$ be a compatible pair of $G$-transfer systems, then $(\Oa,\hull(\O_m))$ is a compatible pair.
\end{corollary}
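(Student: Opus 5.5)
The plan is to reduce everything to the characterization of compatibility in \cref{prop:Maximal compatible characterises all compatibility} together with the saturation of the maximal compatible transfer system from \cref{prop:maximal compatible element is saturated}. Since $(\Oa,\O_m)$ is compatible, \cref{prop:Maximal compatible characterises all compatibility} gives $\O_m \subseteq \mc{\Oa}$. By \cref{prop:maximal compatible element is saturated}, $\mc{\Oa}$ is saturated. Hence $\mc{\Oa}$ is one of the saturated $G$-transfer systems containing $\O_m$.

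Now recall from \cref{def:hull} that $\hull(\O_m)$ is the intersection of all saturated $G$-transfer systems containing $\O_m$. It is therefore contained in every such system, and in particular $\hull(\O_m)\subseteq \mc{\Oa}$. Applying \cref{prop:Maximal compatible characterises all compatibility} in the reverse direction then shows that $(\Oa,\hull(\O_m))$ is compatible.

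The only point needing care is that $\hull(\O_m)$ is itself a $G$-transfer system, so that \cref{prop:Maximal compatible characterises all compatibility} applies to it. This holds because an intersection of $G$-transfer systems is again a $G$-transfer system: each axiom in \cref{def:tranfsys} is closed under intersection, and the complete transfer system guarantees the family is nonempty. One could also check that the intersection is saturated, but that is not needed here, since only the containment $\hull(\O_m)\subseteq\mc{\Oa}$ matters.

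There is no real obstacle. All of the substance is in \cref{prop:maximal compatible element is saturated}, which we take as given.
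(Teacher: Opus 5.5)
Your proposal is correct and follows essentially the same argument as the paper: compatibility gives $\O_m \subseteq \mc{\Oa}$, the saturation of $\mc{\Oa}$ (\cref{prop:maximal compatible element is saturated}) forces $\hull(\O_m)\subseteq \mc{\Oa}$, and \cref{prop:Maximal compatible characterises all compatibility} then gives compatibility. Your remark that $\hull(\O_m)$ is itself a $G$-transfer system is a harmless extra detail that the paper leaves implicit.
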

\begin{proof}
We observe that we must have $\hull(\O_m) \subseteq \mc{\Oa}$ as $\mc{\Oa}$ is saturated by \cref{prop:maximal compatible element is saturated}.
Thus \cref{prop:Maximal compatible characterises all compatibility} implies that $(\Oa,\hull(\O_m))$ must be a compatible pair.
\end{proof}

\begin{proposition}[{\cite[Corollary 7.77]{BiIncomplete}}] \label{prop:saturated iff self compatible}
A $G$-transfer system $\O$ is saturated, if, and only if, it is self compatible. 
\end{proposition}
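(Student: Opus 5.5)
By \cref{prop:Maximal compatible characterises all compatibility}, $\O$ is self compatible exactly when $\O \subseteq \mc{\O}$, i.e.\ when $\mc{\O} = \O$, since $\mc{\O}\subseteq \O$ always holds. So the plan is to show that $\O$ is saturated if and only if $\mc{\O}=\O$. Both directions will use the subgroup-level description of $\mc{\O}$ in \cref{prop:computing Om via restricting edges}.

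For the forward direction I would argue by contradiction using the complement formula. Suppose $\mc{\O}\neq \O$, so some $K\to H$ lies in $\mc{\O}^c$. Then there are subgroups $I<J\leq H$ with $K\cap I \to K\cap J\in \O$ and $I\to J\notin\O$. The aim is to produce an arrow into $J$ from a subgroup of $I$ and then apply saturation.
\begin{itemize}
\item Restricting $K\to H$ along $J\leq H$ gives $K\cap J \to J \in \O$.
\item Composing with $K\cap I\to K\cap J$ gives $K\cap I \to J\in\O$.
\item Since $K\cap I \leq I \leq J$, saturation of $\O$ forces $I\to J\in\O$, a contradiction.
\end{itemize}
Hence every transfer of $\O$ lies in $\mc{\O}$, and $(\O,\O)$ is compatible.

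For the converse, suppose $\mc{\O}=\O$. By \cref{prop:maximal compatible element is saturated}, $\mc{\O}$ is always saturated, so $\O$ is saturated. Alternatively, the direct argument is to take $L\to H\in\O$ and $L\leq K\leq H$. Applying the compatibility square \eqref{eq:CompatDiag} to the pair $(\O,\O)$ with $J=K$, and using $L\cap K = L\to L$ as the reflexive thin edge, gives $K\to H\in\O$.

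I do not expect any real obstacle here. The only point needing care is the forward direction: it hinges on using restriction and composition to obtain a transfer into $J$ originating below $I$, so that saturation applies to the chain $K\cap I\leq I\leq J$.
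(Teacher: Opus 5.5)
Your proof is correct. The paper writes no proof of its own here (it cites Blumberg--Hill), but its introduction says the result can be re-derived from the $\mc{\O}$ formulae, and your argument does exactly that: the converse comes from \cref{prop:maximal compatible element is saturated} once $\mc{\O}=\O$, and the forward direction from the complement formula of \cref{prop:computing Om via restricting edges} together with restriction, composition and saturation.

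The forward direction does not actually need $\mc{\O}$. Given $K\to H$ and $K\cap J\to K$ in $\O$, composing gives $K\cap J\to H$, and saturation along $K\cap J\leq J\leq H$ then yields $J\to H$ directly.
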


\begin{example}\label{ex:non disk-like counter example to good conjecture}
Consider the following two $C_{p^2q}$-transfer systems.
The transfer system on the right is maximally compatible with the one on the left.
\[\begin{tikzcd}[column sep = tiny, row sep = tiny]
	&& \bullet &&&& \bullet \\
	& \bullet && \bullet && \bullet && \bullet \\
	\bullet && \bullet && \bullet && \bullet \\
	& \bullet &&&& \bullet
	\arrow[from=2-4, to=1-3]
	\arrow[from=3-3, to=2-2]
	\arrow[from=4-2, to=2-2]
	\arrow[from=4-2, to=3-1]
	\arrow[from=4-2, to=3-3]
	\arrow[from=4-6, to=3-5]
	\arrow[from=4-6, to=3-7]
\end{tikzcd}\]
If one were to add any edge to the transfer system on the right there would be a saturation failure, and therefore a compatibility failure.
\end{example}

In \cite{Hill-Meng-Nan}, it was shown that for \(C_{p^n}\), there is a maximal saturated transfer system in any transfer system and it is the maximal compatible transfer system. This is not true in general, as the following example shows.

\begin{example}\label{ex:hull preserves compatibility}
Consider the following disklike $C_{p^2q}$-transfer system $\Oa$.
\[\begin{tikzcd}[column sep = tiny, row sep = tiny]
	&& \bullet \\
	& \bullet && \bullet \\
	\bullet && \bullet \\
	& \bullet
	\arrow[from=2-2, to=1-3]
	\arrow[from=3-3, to=1-3]
	\arrow[from=3-3, to=2-2]
	\arrow[from=3-3, to=2-4]
	\arrow[from=4-2, to=1-3]
	\arrow[from=4-2, to=2-2]
	\arrow[curve={height=18pt}, from=4-2, to=2-4]
	\arrow[from=4-2, to=3-1]
	\arrow[from=4-2, to=3-3]
\end{tikzcd}\]
For the following three transfer systems $(\O_L, \O_M, \O_R)$ we observe $\O_L$ is compatible with $\Oa$, $\O_M = \hull(\O_L)$ so is compatible with $\Oa$ by \cref{cor:hull},
and $\O_R$ is not compatible with $\Oa$ despite being the largest saturated transfer system contained by $\Oa$.
We display a red edge which is missing from $\Oa$, inducing a compatibility failure.

\[\begin{tikzcd}[column sep = tiny, row sep = tiny]
	&& \bullet &&&& \bullet &&&& \bullet \\
	& \bullet && \bullet && \bullet && \bullet && \bullet && \bullet \\
	\bullet && \bullet && \bullet && \bullet && \bullet && \bullet \\
	& \bullet &&&& \bullet &&&& \bullet \\
    & {\O_L} & & & & {\O_M} & & & & {\O_R}
	\arrow[from=2-10, to=1-11]
	\arrow[dashed,red,"\nexists",swap, from=2-12, to=1-11]
	\arrow[from=3-7, to=2-8]
	\arrow[from=3-11, to=2-12]
	\arrow[curve={height=18pt}, from=4-2, to=2-4]
	\arrow[from=4-2, to=3-1]
	\arrow[from=4-2, to=3-3]
	\arrow[curve={height=18pt}, from=4-6, to=2-8]
	\arrow[from=4-6, to=3-5]
	\arrow[from=4-6, to=3-7]
	\arrow[curve={height=18pt}, from=4-10, to=2-12]
	\arrow[from=4-10, to=3-9]
	\arrow[from=4-10, to=3-11]
\end{tikzcd}\]
In fact, $\O_M$ is the maximal compatible transfer system $\mc{\Oa}$. Given $\O_R$ is not compatible with $\Oa$, we observe that a maximal saturated transfer system contained in a transfer system $\O$ is not necessarily $\mc{\O}$.
\end{example}

Additionally, it turns out that if a maximal compatible transfer system is disklike, then the original transfer system must be self compatible, and hence saturated through \cref{prop:saturated iff self compatible}.

\begin{proposition}\label{prop: maximal compatible is disk-like implies self compatible}
Let $\O$ be a $G$-transfer system. If $\mc{\O}$ is disklike, then $\mc{\O} = \O$.
\end{proposition}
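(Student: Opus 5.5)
Since $\mc{\O}\subseteq\O$ always holds, the plan is to show the reverse inclusion $\O\subseteq\mc{\O}$. Once that holds, $\O=\mc{\O}$, so $(\O,\O)$ is compatible and $\O$ is saturated by \cref{prop:saturated iff self compatible}.

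The key observation is that every transfer $H\to G$ in $\O$ lies in $\mc{\O}$, whenever $\mc{\O}$ is disklike. Let $K\to G$ be a disklike generator of $\mc{\O}$; there is at least one, e.g.\ $G\to G$, and we take $K$ minimal, which is possible because such $K$ are closed under intersection by \cref{lem:disk-like generators closed under intersection}. Given $H\to G\in\O$, apply the compatibility condition (Definition \ref{def:compatible pair}(2)) to the pair $(\O,\mc{\O})$ with the $\mc{\O}$-transfer $K\to G$ and $J=H$. This yields $H\to G\in\O$, which we already knew, so it gives nothing new. The condition has to be used the other way: we want to promote $\O$-transfers into $\mc{\O}$.

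So the better route is through the formula of \cref{thm:r' can be assumed to be in M(O) 2}, combined with saturation of $\mc{\O}$ (\cref{prop:maximal compatible element is saturated}). Since $\mc{\O}$ is disklike and saturated and contains $G\to G$, I expect the cleanest argument to be this. Let $K\to G\in\mc{\O}$ be its minimal disklike generator. By saturation, every $L$ with $K\le L\le G$ satisfies $L\to G\in\mc{\O}$. By \cref{prop:disklikecharacterization}, every transfer of $\mc{\O}$ is a restriction $K\cap J\to J$ of these.

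Now take an arbitrary $A\to B\in\O$, and use compatibility of $(\O,\mc{\O})$ applied to $K\to G\in\mc{\O}$ with $J=A$. Suppose $K\cap A\to K$ lies in $\O$. Then $A\to G\in\O$, and this should feed a downward induction on $A$ showing that $A\to G$ lies in $\mc{\O}$ via \cref{eq:max compatible with rhs in max compatible 2}.

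The main obstacle is exactly this step: showing that every $\O$-transfer is captured. Concretely, we need to show that no transfer of $\O$ restricts into a compatibility failure, using that the only generators of $\mc{\O}$ point to $G$. My first attempt would be by contradiction, taking a minimal (in the restriction order) transfer $e\in\O\setminus\mc{\O}$. By \cref{thm:r' can be assumed to be in M(O) 2}, $e$ has a restriction $r<^Fe$ with $r\in\mc{\O}$. Since $\mc{\O}$ is disklike, $r$ is itself a restriction of some $L\to G\in\mc{\O}$. I would then try to use compatibility of $L\to G$ with $\O$ to produce the missing edge $J\to H$ of the failure, yielding the contradiction.
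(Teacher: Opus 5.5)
Your final paragraph finds the right strategy. It is the paper's argument run as a contradiction instead of a contrapositive: the paper takes a failure $r'<^F r$ with $r'\in\mc{\O}$, and notes that any $H_{r'}\to G$ in $\O$ restricting to $r'$ restricts to $q\colon H_{r'}\cap H\to H$. That $q$ itself restricts into the failure $r'<^F q$, so no transfer of $\mc{\O}$ with target $G$ lies over $r'$.

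However, the step you leave as ``I would then try to use compatibility of $L\to G$ with $\O$'' is the entire content of the proof, and read literally it does not go through. Write $e\colon K\to H$ and $r\colon K\cap J\to J$, so that $K\cap J\to K\in\O$, $J\to H\notin\O$, and $L\cap J=K\cap J$. Compatibility applied to $L\to G$ at $J$ needs $K\cap J\to L$ in $\O$. Nothing you have gives this, since $L$ need not lie below $H$.

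The missing idea is to restrict the generator to $H$ first. The transfer $L\cap H\to H$ lies in $\mc{\O}$, and $(L\cap H)\cap J=K\cap J$ because $J\le H$. The composite $K\cap J\to K\to H$ lies in $\O$. Restricting it along $L\cap H\le H$ gives $K\cap J\to L\cap H$ in $\O$, since $K\cap J=L\cap J\le L\cap H$. Compatibility of $(\O,\mc{\O})$ applied to $L\cap H\to H$ and $J$ now forces $J\to H\in\O$, which is the contradiction you wanted. With this inserted your argument is complete. Your use of minimality of $e$ to get $r\in\mc{\O}$ via \cref{thm:r' can be assumed to be in M(O) 2} is fine.

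Your second through fourth paragraphs should be discarded. The reduction to transfers $A\to G$ and the ``downward induction'' are never justified. Also, transfers of $\mc{\O}$ are restrictions $L\cap J\to J$ of various generators $L\to G$, not $K\cap J\to J$ for the single minimal $K$.
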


\begin{proof}
We will prove the contrapositive.
If $\mc{\O}\neq \O$, then \cref{thm:r' can be assumed to be in M(O) 2} implies there exist distinct transfers $r',r\in \O$ with $r'<^F r$ and $r' \in \mc{\O}$.
We will show that any transfer in $\O$ whose target is $G$ which restricts into $r'$, say $H_{r'}\to G$, must be an element of $\mc{\O}^c$ by \cref{thm:r' can be assumed to be in M(O) 2}. 
This will be sufficient to show $\mc{\O}$ cannot be disklike.

In the following diagram, the assumed missing transfer $J\to H$ of the compatibility failure $r'<^F r$, causes $H_{r'}\to G$ to also restrict into the compatibility failure $r'<^Fq$. 
Note that $(H_{r'}\cap H) \cap J = K\cap J$, and that the composite $K\cap J \to H$ of the failure $r'<^F r$ restricts into $K\cap J \to H_{r'}\cap H$.

\begin{center}
\begin{tikzcd}[column sep = small, row sep = small]
&& H \\
H_{r'}\cap H & K & & J\\
&& K\cap J
\arrow[from=2-2, to=1-3, "r"']
\arrow[from=3-3, to=2-4, "r'"']
\arrow[from=3-3, to=2-2]
\arrow[from=3-3, to=2-1,]
\arrow[from=2-1, to=1-3, "q"]
\arrow[from=2-4, to=1-3,  dashed,"\not \exists"', red]
\end{tikzcd}
\end{center}
\end{proof}

We highlight one transfer system of particular interest.

\begin{definition}
    The \emph{tulip} $\Tu(G)$ is the $G$-transfer system that has all possible transfers that do not connect to $G$ and the reflexive transfer $G \to G$.
\end{definition}

\begin{example}
    The name comes from the following picture of $\Tu(Q_8)$:
\[\begin{tikzcd}[column sep = tiny, row sep = tiny]
    & \bullet \\
    \bullet & \bullet & \bullet \\
    & \bullet \\
    & \bullet
    \arrow[from=3-2, to=2-1]
    \arrow[from=3-2, to=2-2]
    \arrow[from=3-2, to=2-3]
    \arrow[curve={height=-12pt}, from=4-2, to=2-1]
    \arrow[curve={height=-12pt}, from=4-2, to=2-2]
    \arrow[curve={height=12pt}, from=4-2, to=2-3]
    \arrow[from=4-2, to=3-2]
\end{tikzcd}\]
\end{example}

Note that the tulip is always saturated, furthermore the tulip is only disklike when $G$ has no non-trivial, proper subgroups, and is hence a cyclic group of prime order.

\begin{proposition}
    Let $\O$ be a $G$-transfer system which is not complete. If $\O$ contains $\Tu(G)$, then $\Tu(G) = \mc{\O}$.
\end{proposition}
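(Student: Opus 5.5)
The plan is a two-sided containment. By \cref{prop:Maximal compatible characterises all compatibility} and \cref{prop:computing Om via restricting edges}, we show that every transfer of $\Tu(G)$ lies in $\mc{\O}$, and that no non-reflexive transfer with target $G$ lies in $\mc{\O}$. Since every transfer $K \to H$ with $H < G$ already belongs to $\Tu(G)$, these two facts together give $\mc{\O} = \Tu(G)$.

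For $\Tu(G) \subseteq \mc{\O}$, we apply \cref{eqn:M(O) formula}. The reflexive transfer $G \to G$ is in $\mc{\O}$, since $\mc{\O}$ is a transfer system. Now take $K \to H$ in $\Tu(G)$ with $H < G$; it lies in $\O$ because $\Tu(G) \subseteq \O$. For any $I < J \leq H$ we have $J \leq H < G$, so $I \to J$ is an inclusion that does not reach $G$. Hence $I \to J \in \Tu(G) \subseteq \O$. The implication in \cref{eqn:M(O) formula} therefore holds automatically, because its conclusion always holds, and so $K \to H \in \mc{\O}$.

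For the reverse containment, we use the hypothesis that $\O$ is not complete. Since $\O$ contains every inclusion not landing in $G$, the missing inclusion must be some $J \to G$ with $J < G$ and $J \to G \notin \O$. Suppose $K \to G$ were in $\mc{\O}$ with $K < G$. We apply the compatibility condition of \cref{def:compatible pair} to the pair $(\O, \mc{\O})$ with $H = G$ and this $J$:
\begin{itemize}
    \item the transfer $K \cap J \to K$ satisfies $K < G$, so it lies in $\Tu(G) \subseteq \O$;
    \item compatibility then forces $J \to G \in \O$, which is a contradiction.
\end{itemize}
Thus $\mc{\O}$ contains no non-reflexive transfer to $G$, so $\mc{\O} \subseteq \Tu(G)$.

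There is no real obstacle here. The only care needed is in the second step: the witness $J$ must be chosen as the specific subgroup for which $J \to G$ is missing from $\O$, and we must use that $K$ is proper so that $K \cap J \to K$ is guaranteed to lie in $\O$. The edge case $K = J$ is harmless, since then the same argument shows directly that $K \to G \notin \O$.
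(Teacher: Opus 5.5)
Your proof is correct. Your first containment $\Tu(G)\subseteq\mc{\O}$ is essentially the paper's: the paper simply remarks that the compatibility condition is automatic below $G$, and you make this explicit via the formula of \cref{prop:computing Om via restricting edges}.

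For the reverse containment you take a different route. You apply \cref{def:compatible pair} directly to the pair $(\O,\mc{\O})$. The witness is the missing edge $J\to G$, and you use that $K\cap J\to K$ lies in $\Tu(G)\subseteq\O$ because $K<G$.

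The paper instead argues as follows. If $\mc{\O}$ contained some $H\to G$ with $H<G$, then composing with $1\to H\in\Tu(G)\subseteq\mc{\O}$ gives $1\to G\in\mc{\O}$. Since $\mc{\O}$ is saturated (\cref{prop:maximal compatible element is saturated}), this forces $\mc{\O}$ to be complete, and hence so is $\O\supseteq\mc{\O}$, a contradiction.

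Your version is more elementary and self-contained. It uses neither the saturation of $\mc{\O}$ nor the first containment, only the hypothesis $\Tu(G)\subseteq\O$. The paper's version is shorter once the saturation result is available. It also carries a bit of structural information: every saturated transfer system strictly containing $\Tu(G)$ is complete.
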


\begin{proof}
Since the tuplic contains all non-trivial transfers with target a proper subgroup, so does \(\O\). This means the compatibility condition is immediate: \(\O\) is indistinguishable from the complete transfer system restricted to proper subgroups.

If we add any transfer \(H\to G\) to \(\Tu(G)\), then by transitivity, we have \(\{e\}\to G\) in the new system. The saturated hull of any transfer system containing the map from the minimal element to the maximal one is the complete transfer system.
\end{proof}

\subsection{Disklike Maximal Compatible Formulae}

Recall from \cref{def:compatibility successes and failures} that if $r' < r$ is a covering relation of the restriction poset, we write $r' \prec r$.
We will show that if $\O$ is a disklike transfer system we may refine the formulae of \cref{thm:r' can be assumed to be in M(O) 2} to check only covering relations instead of all relations.

\begin{lemma}\label{lem: induction so don't need notin}
If there is a transfer $s \in \O$ such that $s \notin \mc{\O}$, then there exist transfers $e \notin \mc{\O}$ and $r \in \mc{\O}$ such that $r <^F e \leq s$.
\end{lemma}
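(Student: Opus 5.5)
We argue by well-founded induction on the restriction poset of $\O$, using \cref{thm:r' can be assumed to be in M(O) 2}. Among all transfers $e \le s$ with $e \notin \mc{\O}$ (a nonempty set, since $s$ itself qualifies), we choose one that is minimal with respect to $\le$. This is possible because $\O$ is finite. We will show that this minimal $e$ admits some $r < e$ with $r \in \mc{\O}$ and $r <^F e$.

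Since $e \in \mc{\O}^c$, \cref{eq:complement with rhs in max compatible 2} provides some $r < e$ with either $r \notin \mc{\O}$ or $r <^F e$. The first alternative is impossible. Indeed, $r < e \le s$ gives $r \le s$ by transitivity of the restriction order: restricting a restriction of $s$ is again a restriction of $s$, since $(K\cap J)\cap J' = K\cap J'$ when $J'\le J$. So $r \notin \mc{\O}$ would contradict the minimality of $e$. Hence every $r<e$ lies in $\mc{\O}$, and the witness must satisfy $r <^F e$. It therefore also satisfies $r\in\mc{\O}$, giving $r <^F e \le s$ with $e\notin\mc{\O}$ and $r\in\mc{\O}$, as required.

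The only points needing care are the following. First, the restriction relation $\le$ on transfers must really be a partial order, in particular transitive, so that the minimal-element argument applies. This follows directly from the formula for restrictions. Second, the alternatives in \cref{eq:complement with rhs in max compatible 2} quantify over the same $r$, so excluding the first alternative for every $r<e$ forces the second for the witnessing $r$. I expect no genuine obstacle. The main subtlety is simply to invoke the recursive characterization of the complement, rather than \cref{prop:computing Om via restricting edges}, since the latter only produces failures $r'<^F r$ located somewhere below $e$ rather than directly at $e$.
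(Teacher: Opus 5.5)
Your proof is correct and is essentially the paper's argument: the paper descends along a chain $s = s_0 > s_1 > \cdots$ of witnesses supplied by the recursive complement formula until it reaches a witness lying in $\mc{\O}$. Your choice of a $\le$-minimal $e \le s$ outside $\mc{\O}$ packages that same descent as a minimal-element argument, and it makes explicit both termination and the transitivity $r<e\le s \Rightarrow r\le s$, which the paper uses only implicitly.
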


\begin{proof}
    Since $s \in \O$ and $s \notin \mc{\O}$, then $s \in \mc{\O}^c$.
    Meaning, there exists a transfer $s_1 < s$ in $\O$ such that $s_1 \notin \mc{\O}$ or $s_1 <^F s$.
    If $s_1\in \mc{\O}$, then this implies we must have $s_1 <^F s$, thus we can let $r = s_1$ and $e = s$ and we are done.
    If $s_1 \not \in \mc{\O}$, then given $s_1<s$, we can repeat this argument to find a transfer $s_2 < s_1$ such that $s_2 \notin \mc{\O}$ or $s_2 <^F s_1$.
    This induction argument will finish because $\mc{\O}$ contains all transfers in $\O$ which have no non-trivial restrictions.
    Therefore there exists some $n$ such that $s_n < s_{n-1} < \ldots < s_1 < s_0$, where $s_n \in \mc{\O}$, $s_{n-1} \notin \mc{\O}$, and $s_n <^F s_{n-1}$.
\end{proof}

Since every relation is generated by covering relations, we might ask if we can further simplify to checking only covering relations. In general, this is false, but for disklike transfer systems, it holds true.

\begin{theorem}\label{thm:maximal compatible disk-like via cover relations}
Let $\O$ be a disklike $G$-transfer system.
The maximal compatible transfer system can be written as:
\begin{align}\label{eq:disklike prec max compatible with rhs in max compatible}
    \mc{\O} = \{ e \in \O \, | \, \forall r\prec e, r \in \mc{\O} \text{ and $r\coverSuccess e$}\}.
\end{align}
Its complement can be written as:
\begin{align}\label{eq:disklike prec max compatible complement}
    \mc{\O}^c = \{ e \in \O \, | \, \exists r\prec e, \text{such that $r \not\in \mc{\O}$ or $r \coverFailure e$}\}.
\end{align}
\end{theorem}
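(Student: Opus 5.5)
Write $\mc{\O}'$ for the right-hand side of \cref{eq:disklike prec max compatible with rhs in max compatible}. Like \cref{thm:r' can be assumed to be in M(O) 2}, it is defined by a recursion on the restriction poset of $\O$, now running only along cover relations. I will prove $\mc{\O}' = \mc{\O}$; \cref{eq:disklike prec max compatible complement} then follows by negation. The inclusion $\mc{\O} \subseteq \mc{\O}'$ is formal. Every cover $r \prec e$ is in particular a relation $r < e$. So if $e \in \mc{\O}$, \cref{thm:r' can be assumed to be in M(O) 2} gives $r \in \mc{\O}$ and $r \resSuccess e$ for all $r \prec e$, which is exactly membership in $\mc{\O}'$. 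Disklikeness plays no role in this direction.

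For the reverse inclusion I argue by contradiction. Choose $e \in \mc{\O}' \setminus \mc{\O}$ minimal in the restriction order. First I check that $\mc{\O}'$ is closed downward under restriction: any $s < e$ is reached from $e$ by a chain of covers, and each cover condition forces the next element down to lie in $\mc{\O}'$. By minimality, every proper restriction of $e$ therefore lies in $\mc{\O}$. Since $e \notin \mc{\O}$, \cref{thm:r' can be assumed to be in M(O) 2} (or \cref{lem: induction so don't need notin}) produces some $r < e$ with $r \resFailure e$. Concretely, write $e\colon K \to H$ and $r\colon K\cap J \to J$ with $J \le H$. Then $K \cap J \to K$ is in $\O$ and $J \to H$ is not in $\O$.

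Next I pick a cover $r' \prec e$ with $r \le r'$, say $r'\colon K \cap L \to L$ with $J \le L \le H$. Because $r' \in \mc{\O}$, the relation $r \le r'$ is a compatibility success (using \cref{eq:expression for maximal compatible in terms of edges not subgroups}, when $r<r'$). Restricting $K\cap J \to K$ along $L$ gives $K \cap J \to K \cap L$ in $\O$, so this success yields $J \to L \in \O$. Because $e \in \mc{\O}'$, we also have $r' \coverSuccess e$. If I knew that $K \cap L \to K$ is in $\O$, this would give $L \to H \in \O$, and composing with $J \to L$ would give $J \to H \in \O$, which is the desired contradiction.

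The main obstacle is therefore to choose the cover $r'$ so that $K \cap L \to K$ lies in $\O$; this is where disklikeness must enter. My first attempt will use \cref{prop:disklikecharacterization} to write $K\cap J \to K$, and $e$ itself, as restrictions of generators $A \to G$ and $B\to G$ in $\O$. I will then use \cref{lem:disk-like generators closed under intersection} to control which intermediate subgroups $L$ occur in covers. The aim is to take $L$ so that $K \cap L$ is a restriction of the transfer $K \cap J \to K$. For example, I would try to take $L$ with $K\cap L = K \cap J$, or with $K \cap L = K \cap (A\cap B)$ or a similar intersection of generators, which makes $K\cap L \to K$ a restriction of $A\cap B \to G$. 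If no single cover works, the fallback is to run the same argument along a whole chain of covers from $r$ up to $e$. For that I would strengthen the inductive hypothesis to carry the statement ``$K\cap L_i \to K \in \O$'' step by step, and again use disklikeness to supply that transfer at each stage.
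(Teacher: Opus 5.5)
Your setup is sound, and cleaner than the paper's: taking $e\in\mc{\O}'\setminus\mc{\O}$ minimal, every proper restriction of $e$ lies in $\mc{\O}$. You then get $r\colon K\cap J\to J$ with $r<^F e$, so $K\cap J\to K\in\O$ and $J\to H\notin\O$. But the proposal stops exactly where disklikeness must be used, and the route you sketch does not go through as stated. You need a cover $r'\colon K\cap L\to L$ of $e$ satisfying both $J\le L$ and $K\cap L\to K\in\O$, and your candidates deliver only the second. If $A\to G$ restricts to $K\cap J\to K$, then $L=H\cap A$ does give $L\to H\in\O$ and $K\cap L=K\cap J$. However, nothing forces $J\le A$, so $r\le r'$ fails. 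There is also no reason a cover of $e$ above $r$ should satisfy $K\cap L=K\cap J$. The fallback of carrying ``$K\cap L_i\to K\in\O$'' along a chain of covers is not specified, and such transfers do not propagate along a chain without something like saturation.

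The missing idea is to work at the bottom of the chain and apply disklikeness to a different transfer.
\begin{itemize}
\item Take a cover $r\prec r_1\le e$, say $r_1\colon K\cap Y\to Y$ with $J\le Y\le H$. If $r_1=e$, then $r\coverFailure e$, contradicting $e\in\mc{\O}'$.
\item Otherwise $r_1\in\mc{\O}$. Since $K\cap J\to K\cap Y$ is a restriction of $K\cap J\to K$, the success $r\resSuccess r_1$ forces $f\colon J\to Y\in\O$.
\item Now write $f$, not $a$ or $e$, as the restriction of a generator $H_f\to G$, so $J=Y\cap H_f$. Set $L=H\cap H_f$.
\item Then $J\le L$, and $L\to H\in\O$ as a restriction of $H_f\to G$. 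Also $L\neq H$, since otherwise $J=Y$ and $r=r_1$.
\item So the restriction $q\colon K\cap L\to L$ of $e$ is proper with $r\le q$, hence $q\in\mc{\O}$.
\item Restricting $K\cap J\to K$ gives $K\cap J\to K\cap L\in\O$. So $r\resSuccess q$ (or trivially $r=q$) yields $J\to L\in\O$.
\item Composing with $L\to H$ gives $J\to H\in\O$, a contradiction.
\end{itemize}

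This $q$ is the paper's transfer (the restriction of $\tilde e$ to $H\cap H_f$). The paper shows $r<^F q<e$ and then iterates until it reaches a covered failure. In your minimal-counterexample framing one step suffices. You need neither a cover of $e$ nor any transfer $K\cap L\to K$, because $L\to H$ comes directly from the generator $H_f\to G$.
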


\begin{proof}
Let $\mc{\O}$ be the formula of \cref{eq:max compatible with rhs in max compatible 2}, and let $\mc{\O}'$ be the formula of \cref{eq:disklike prec max compatible with rhs in max compatible}.
We will show the formulae of their complements coincide, which yields the equality of them by negation.
We first observe that $\mc{\O} \subseteq \mc{\O}'$ as the condition of $\mc{\O}'$ is a subset of the condition of $\mc{\O}$, which implies ${\mc{\O}'}^c \subseteq \mc{\O}^c$.

For the converse we will show that $\mc{\O}^c \subseteq {\mc{\O}'}^c$. 

First, note that we have a version of \cref{lem: induction so don't need notin} for \({\mc{\O}'}^c\): an edge \(s\) is in \({\mc{\O}'}^c\) if there is an edge \(r\in \mc{\cO}\) with
\[
r\prec^F e\leq s.
\]

We can use \cref{lem: induction so don't need notin} to say that without loss of generality we can consider $e \in \mc{\O}^c$ and $r \in \mc{\O}$ to be such that $r <^F e$:

\[\begin{tikzcd}[column sep = tiny, row sep = tiny]
	& H \\
	K && J \\
	& {K \cap J}
	\arrow["e", from=2-1, to=1-2]
	\arrow["\not \exists"',color={rgb,255:red,214;green,92;blue,92}, dashed, from=2-3, to=1-2]
	\arrow["a", from=3-2, to=2-1]
	\arrow["r"', from=3-2, to=2-3]
\end{tikzcd}\]
where the red dashed edge is not in $\O$ since $r <^F e$.
There exists a chain of covering relations $r \prec r_1 \prec \ldots \prec r_n \prec e$.
Let us consider the following diagram:

\[\begin{tikzcd}[column sep = tiny, row sep = tiny]
	& H \\
	K && Y \\
	& K\cap Y && J \\
	&& {K \cap J}
	\arrow["e", from=2-1, to=1-2]
	\arrow["{r_1}", from=3-2, to=2-3]
	\arrow["\not \exists"',color={rgb,255:red,214;green,92;blue,92}, curve={height=18pt}, dashed, from=3-4, to=1-2]
	\arrow["f"', dashed, from=3-4, to=2-3]
	\arrow["a", curve={height=-18pt}, from=4-3, to=2-1]
	\arrow[from=4-3, to=3-2]
	\arrow["r"', from=4-3, to=3-4]
\end{tikzcd}\]
where the transfer $a$ restricts to $K \cap J \to K\cap Y$.
If $f \notin \O$ then we are done as $r \prec^F r_1\leq e$.
So we can assume that $f \in \O$.

Since $e,f \in \O$ and $\O$ is disklike, then there must exist transfers $H_e \xrightarrow{\tilde{e}} G, H_f \xrightarrow{\tilde{f}} G$ in $\O$ which restrict to $e$ and $f$, respectively.
Below is the same diagram as above but with the subgroups written in terms of $Y,H,H_e,$ and $H_f$:
\[\begin{tikzcd}[column sep = tiny, row sep = tiny]
	& H \\
	{H \cap H_e} && Y \\
	& {Y \cap H_e} && {Y \cap H_f} \\
	&& {Y \cap H_e \cap H_f}
	\arrow["e", from=2-1, to=1-2]
	\arrow["{r_1}", from=3-2, to=2-3]
	\arrow["\not \exists"',color={rgb,255:red,214;green,92;blue,92}, curve={height=18pt}, dashed, from=3-4, to=1-2]
	\arrow["f"', from=3-4, to=2-3]
	\arrow["a", curve={height=-18pt}, from=4-3, to=2-1]
	\arrow[from=4-3, to=3-2]
	\arrow["r"', from=4-3, to=3-4]
\end{tikzcd}\]
note that since $r_1 < e$ and $Y < H$ then $X = H_e \cap H \cap Y = H_e \cap Y$.
Additionally, we can restrict 
\(\tilde{f}\) to \(H\), 
\(a\) to \(H_f \cap  H_e\), and \(\tilde{e}\) to \(H\cap H_f\). 
This yields the following diagram:
\[
\begin{tikzcd}[column sep = small, row sep = small]
{} & {H} & {} & {} \\
{H_e \cap H} & {} & {Y} & {H_f \cap H} \\
{} & {H_f \cap Y} & {H_e\cap H_f\cap H} & {H_e\cap Y}\\
{} & {} & {H_e\cap H_f\cap Y} & {}
\arrow[from=2-1, to=1-2, "e"]
\arrow[from=3-2, to=2-3]
\arrow[from=3-4, to=1-2, dashed,"{\not \exists}", red, bend right]
\arrow[from=4-3, to=3-4, "r"']
\arrow[from=3-4, to=2-3, "f"', near end]
\arrow[from=4-3, to=3-2]
\arrow[from=4-3, to=2-1, "a", bend left]
\arrow[from=4-3, to=3-3, dashed]
\arrow[from=3-3, to=2-4, crossing over, "q", near end]
\arrow[from=3-4, to=2-4, dashed,"{\not \exists}"', red]
\arrow[from=2-4, to=1-2, dashed, bend right]
\end{tikzcd}
\]

Since \(Y\cap H_f\to H\) is not in \(\O\), we cannot have \(Y\cap H_f\to H\cap H_f\) in \(\O\). This gives us a new, simplified diagram:

\[\begin{tikzcd}[column sep = tiny, row sep = tiny]
	& H \\
	{H \cap H_e} && {H \cap H_f} \\
	& {H \cap H_e \cap H_f} && {Y \cap H_f} \\
	&& {Y \cap H_e \cap H_f}
	\arrow["e", from=2-1, to=1-2]
	\arrow[from=2-3, to=1-2]
	\arrow[from=3-2, to=2-1]
	\arrow["q", from=3-2, to=2-3]
	\arrow["{\not \exists}"'{pos=0.7}, color={rgb,255:red,214;green,92;blue,92}, dashed, from=3-4, to=2-3]
	\arrow[from=4-3, to=3-2]
	\arrow["r"', from=4-3, to=3-4]
\end{tikzcd}\]

We show now that \(q\) and \(e\) are distinct: $H \neq H_f \cap H$. 
If $H = H \cap H_f$, then $H \subseteq H_f$.
We assumed that $Y \subseteq H$, so $Y = Y \cap H_f = Y$ and $Y \cap H_e \cap H_f = Y \cap H_e$, so $r=r_1$, a contradiction.

If $r \prec q$ then we are done as \(r\prec^F q\leq e\). 

If $r \not\prec q$, then we have constructed $r < q < e$ such that $q \in \mc{\O}^c$, through a compatibility failure.
Hence, we can repeat our construction of $q$ given $r<^F e$ on the pair $r<^F q$ to find $q'$ such that $r<^F q'<q$, and so on.
We will eventually obtain a transfer $s$ such that $r \prec s < e$ and $r \prec^F s$, which implies that $s \notin \mc{\O}'$.
There must exist a chain of covering relations $s \prec s_1 \prec \ldots \prec e$, and since $s \in \mc{\O}'^c$ then $e \in \mc{\O}'^c$ as desired.
\end{proof}

\begin{remark}
Note that \cref{thm:maximal compatible disk-like via cover relations} does not in general apply to transfer systems that are not disklike. Consider the following two $C_{p^2q}$-transfer systems. Let the transfer system on the right be $\O$. If we use the formula in \cref{thm:maximal compatible disk-like via cover relations} to try to find 
the maximal compatible transfer system,
we obtain the transfer system on the left. However, this is not $\mc{\O}$ as there is a compatibility failure highlighted by the red dashed arrow on $\O$.
\[\begin{tikzcd}[column sep = tiny, row sep = tiny]
	&& \bullet &&&& \bullet \\
	& \bullet && \bullet && \bullet && \bullet \\
	\bullet && \bullet && \bullet && \bullet \\
	& \bullet &&&& \bullet
	\arrow[from=2-4, to=1-3]
	\arrow[from=2-8, to=1-7]
	\arrow[from=3-1, to=2-2]
	\arrow[from=3-3, to=2-2]
	\arrow["{\not\exists}", color=red, curve={height=-18pt}, dashed, from=3-5, to=1-7]
	\arrow[from=3-5, to=2-6]
	\arrow[from=3-7, to=2-6]
	\arrow[from=4-2, to=2-2]
	\arrow[from=4-2, to=3-1]
	\arrow[from=4-2, to=3-3]
	\arrow[from=4-6, to=1-7]
	\arrow[from=4-6, to=2-6]
	\arrow[curve={height=18pt}, from=4-6, to=2-8]
	\arrow[from=4-6, to=3-5]
	\arrow[from=4-6, to=3-7]
\end{tikzcd}\]
\end{remark}

\begin{example}\label{ex:algorithm example} 
Consider the following disklike $C_{p^2q^2}$-transfer system, $\O$, with $\mc{\O}$ indicated in bold blue edges.
Using this example, we illustrate all possible logical reasons for the presence or absence of an edge in a generic $\mc{\O}$ using \cref{thm:maximal compatible disk-like via cover relations}:
$e_1,f_1$ and $g_1$ are all in $\mc{\O}$ as they restrict into nothing else; $e_2$ is in $\mc{\O}^c$ as $e_1 \coverFailure e_2$; $e_3$ is in  $\mc{\O}^c$ as $e_2 \cover e_3$ and $e_2 \in \mc{\O}^c$; and $f_2 \in \mc{\O}$ as it covers elements of $\mc{\O}$, $f_1$ and $g_1$, through compatibility successes.

\begin{center}
\begin{tikzcd}[ampersand replacement = \&, nodes={scale=0.8},column sep = small, row sep = small]
\& \& \bullet\\
\& \bullet \& \& \bullet\\
\bullet \&\& \bullet \& \& \bullet\\
\& \bullet \&\& \bullet\\
\& \& \bullet

\arrow[from=2-2, to=1-3, "e_3", outer sep=-.5]
\arrow[from=3-1, to=1-3, bend left=37]
\arrow[from=4-4, to=1-3, bend right=10]
\arrow[from=3-3, to=2-4, "e_2", outer sep=-.5]
\arrow[from=4-4, to=2-2, bend left]
\arrow[from=4-2, to=2-4, bend right]
\arrow[from=5-3, to=2-2, bend left=20]
\arrow[from=5-3, to=1-3, bend left]
\arrow[from=5-3, to=2-4, bend right=18]
\arrow[from=5-3, to=3-1, bend left=37]
\arrow[from=4-4, to=2-4, bend right=20]

\arrow[from=3-1, to=2-2, very thick, crossing over, cblue]
\arrow[from=4-4, to=3-5, "e_1"', very thick, crossing over, outer sep=-.5, cblue]
\arrow[from=4-2, to=3-3, very thick, crossing over, cblue]
\arrow[from=5-3, to=3-5, bend right=37, very thick, cblue]
\arrow[from=5-3, to=4-2, "f_1", very thick, outer sep=-1.5, cblue]
\arrow[from=5-3, to=4-4, "g_1"', very thick, outer sep=-.5, cblue]
\arrow[from=5-3, to=3-3, "f_2"', very thick, crossing over, pos=.35, cblue]
\arrow[from=4-4, to=3-3, very thick, crossing over, cblue]
\end{tikzcd}
\end{center}

\end{example}

\subsection{Disklike Consequences and Examples}

\cref{thm:maximal compatible disk-like via cover relations} immediately yields an algorithm for computing the maximal compatible transfer system.
We observe that the initialization $\mc{\O}\coloneqq  \min \O$ can be done as transfers in $\O$ which only restrict to trivial transfers are trivially in $\mc{\O}$ (\cref{cor:an edge with no restrictions is in the maximal compatible}), and the reason we may work with conjugacy classes of transfers, $[m]$, is because $\mc{\O}$ is closed under conjugation.

\alglanguage{pseudocode}
\begin{algorithm}[H]
\renewcommand{\thealgorithm}{\thetheorem}
\caption{Given a disklike $G$-transfer system $\O$ computes $\mc{\O}$.}
\begin{algorithmic}\label{alg:maximal compatible of disk-like}
\State Initialize $\mc{\O} \coloneqq  \min \O$ under the $\leq$ order, and $Q \coloneqq  \O \setminus \mc{\O}$.
\While{$Q$ is non-empty}
\State Let $m$ be a minimal element of $Q$ under the $\leq$ order, and delete $[m]$ from $Q$.
\If{$\forall r\in \O$ such that $r \prec m$, we have that $r \in \M(\O)$ and $r \coverSuccess m$}
\State Add $[m]$ to $\mc{\O}$.
\EndIf
\EndWhile
\State \Return $\mc{\O}$.
\end{algorithmic}
\end{algorithm}

We now analyze the performance of \cref{alg:maximal compatible of disk-like}.
First, we bound the number of non-reflexive transfers in a disk-like transfer system $\O$, denoted \defn{$|\O|$}, by the number of cover relations in the restriction poset of $\O$, denoted \defn{$C_{\O}$}.
Note, most disklike transfer systems have many more cover relations than non-reflexive transfers.

\begin{lemma}\label{lem:number of edges of a disk-like transfer sytem are bounded by number of cover relations}
Every disklike $G$-transfer system, $\O$, satisfies $|\O| \leq 2C_{\O} + 1$.
\end{lemma}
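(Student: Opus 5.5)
The idea is to build an injection-like assignment from non-reflexive transfers of $\O$ to cover relations of the restriction poset, losing at most a factor of two plus one exceptional element. The restriction poset here is the set of transfers of $\O$ (including reflexive ones) ordered by $\leq$ from \cref{def:compatibility successes and failures}. Its minimal elements are reflexive transfers. Indeed, any transfer $K\to H$ restricts along $L=K$ to the reflexive transfer $K\to K$, and this restriction is strict when $K\neq H$. So every non-reflexive transfer $e$ has at least one element strictly below it, and hence at least one cover relation $r\prec e$ with $e$ on top.

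The first plan is to assign to each non-reflexive $e$ one chosen lower cover $r_e\prec e$. This is injective on the top element, so it gives $|\O|\le C_{\O}$ directly. That bound is stronger than claimed, which makes me suspect the authors count something slightly differently. Perhaps reflexive transfers are identified, or the restriction poset is taken only on non-reflexive transfers, or conjugacy classes are involved.

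The fallback, to match $2C_{\O}+1$, is to work in the poset of non-reflexive transfers only. There I would use disklikeness through \cref{prop:disklikecharacterization}: every transfer is a restriction of some $H\to G$ in $\O$, and by \cref{lem:disk-like generators closed under intersection} the generators are closed under intersection. I will treat the Hasse diagram as a graph on $|\O|$ vertices and argue that it is connected. The key step is that any two non-reflexive transfers $e\le \tilde e$ and $f\le\tilde f$ with generators $\tilde e=(H_e\to G)$ and $\tilde f=(H_f\to G)$ are linked via $H_e\cap H_f\to G$. This transfer lies below both $\tilde e$ and $\tilde f$: it is the restriction of $\tilde e$ along $H_f$ and of $\tilde f$ along $H_e$. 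It is non-reflexive unless $H_e\cap H_f=G$, which cannot happen for proper generators. Each $e$ is connected to its generator by a chain of covers. So when $\O$ is nontrivial the Hasse diagram of non-reflexive transfers is connected, and a connected graph on $n$ vertices has at least $n-1$ edges. This gives $|\O|\le C_{\O}+1\le 2C_{\O}+1$.

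The main obstacle is the connectivity step. I must ensure that chains between $e$ and $\tilde e$ stay among non-reflexive transfers, and that the transfer $H_e\cap H_f\to G$ is genuinely comparable to both generators in the restriction order. It is: restricting $H_e\to G$ along $L=H_f$ gives $H_e\cap H_f\to H_f$, not $\to G$. So I will instead link through the transfer $H_e\cap H_f\to H_f$, which lies below $\tilde f$ trivially and below $\tilde e$ by restriction. This keeps everything non-reflexive whenever $H_e\cap H_f\neq H_f$. In the degenerate case $H_f\le H_e$, $\tilde e$ restricts directly onto $\tilde f$ along $H_f$, so the two generators are already comparable. The factor $2$ and the $+1$ then absorb any slack, such as the empty/trivial case $|\O|=0$.
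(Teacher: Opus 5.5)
Your first plan is correct only if reflexive transfers count as vertices of the restriction poset. In that case every non-reflexive $K\to H$ lies strictly above $K\to K$, and $|\O|\le C_\O$ follows without using disklikeness at all. That is not the paper's count. In the complexity analysis the restriction poset is a graph with $|\O|$ vertices and $C_\O$ edges, so its vertices are the non-reflexive transfers. The disklike hypothesis, the factor $2$ and the $+1$ are there for this reading.

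Under this reading your fallback has a genuine gap.
\begin{itemize}
\item The linking step is false. The only restriction of $\tilde f=(H_f\to G)$ with target $H_f$ is the reflexive $H_f\to H_f$, so $H_e\cap H_f\to H_f$ lies below $\tilde e$ but not below $\tilde f$.
\item Your degenerate case fails for the same reason. Restricting $\tilde e$ along $H_f\le H_e$ gives $H_f\to H_f$, not $\tilde f$. In fact distinct transfers with target $G$ are never comparable.
\item The conclusion itself is false: the Hasse diagram need not be connected, and $|\O|\le C_\O+1$ fails. Take the complete $C_{p^2}$-transfer system, which is disklike. Its non-reflexive transfers are $1\to C_p$, $1\to C_{p^2}$ and $C_p\to C_{p^2}$. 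The only cover relation is $1\to C_p\prec 1\to C_{p^2}$. The transfer $C_p\to C_{p^2}$ is isolated: its proper restrictions $1\to 1$ and $C_p\to C_p$ are reflexive, and nothing lies above a transfer with target $G$.
\end{itemize}
In this example $|\O|=3=2C_\O+1$. So the bound is sharp, and the factor $2$ is not slack to be absorbed.

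The missing idea is to use upper covers for non-generators and to count the generators separately, as the paper does. Let $D$ be the set of non-reflexive transfers with target $G$.
\begin{itemize}
\item By disklikeness, every $e\in\O\setminus D$ lies strictly below some element of $D$, so it has an upper cover $e\prec q$. Sending such a cover to its lower element gives $|\O\setminus D|\le C_\O$.
\item If $D\neq\emptyset$, \cref{lem:disk-like generators closed under intersection} gives a least generator $N_\O\to G$. The map $H\to G\mapsto N_\O\to H$ sends every other generator injectively to a non-reflexive transfer in $\O\setminus D$. Hence $|D|\le|\O\setminus D|+1$.
\end{itemize}
Together these give $|\O|\le 2|\O\setminus D|+1\le 2C_\O+1$. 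The isolated generator in the $C_{p^2}$ example is exactly what this second count handles.
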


\begin{proof}
Let $D$ be the maximal set of disklike generators of $\O$.
Then we have the partition $\O = D\sqcup (\O \setminus D)$.
We observe that every transfer in $e\in \O\setminus D$ has a disklike generator, and hence has at least one cover relation above it $e\prec q$.
Thus there is a surjection from cover relations to $\O\setminus D$, given by $(e \prec q )\mapsto e$,
implying $|\O\setminus D| \leq C_{\O}$.
We shall show that $|D|\leq |\O \setminus D|+1$, and thus conclude that $|\O| \leq 2 |\O\setminus D| + 1 \leq 2 C_{\O} + 1$.

Suppose that $\O$ has a single disklike generator, then $|D|=1 \leq 1$.
If $\O$ has more than one disklike generator, then it has a unique minimal disklike generator $N_{\mathcal O}\to G$ where $N_{\mathcal O}$ is meet of all subgroups $H$ such that $H \to G \in D$, (see \cref{lem:no universal transfer means image of unit}).
By restriction, there are transfers $N_{\mathcal O}\to H$ for every every other disklike generator $H\to G \in D$.
Thus we have an injection from the set of non-minimal disk like generators to $\mathcal{O}\setminus D$, given by $H\to G \mapsto N_{\mathcal O}\to H$.
This implies $|D|-1 \leq |\O\setminus D|$ as desired.
\end{proof}

We now analyze the worst case asymptotic performance of \cref{alg:maximal compatible of disk-like}.
Recall that if $f,g:A\to \mathbb{N}$, then in big O notation we say $f =O(g)$ if there exists a constant $K \in \mathbb{N}$ such that $\forall a \in A, f(a)\leq K g(a)$.
Thus an algorithm is said to have worst case time complexity $O(g)$ if a function $f$ measuring the number of operations of the algorithm satisfies $f=O(g)$.

\begin{proposition}\label{prop:disk-like maximal compatible alg time complexity}
\cref{alg:maximal compatible of disk-like} has a $O(C_{\O})$ worst case time complexity across all transfer systems across all groups, where $C_{\O}$ is the number of cover relations in the restriction poset of $\O$.
Moreover, \cref{alg:maximal compatible of disk-like} is optimal among all algorithms which must compute the restriction poset of $\O$
\end{proposition}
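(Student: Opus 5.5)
The plan is to account for the work of \cref{alg:maximal compatible of disk-like} in three phases and show that each costs $O(C_{\O})$: building the restriction poset together with its cover relations, initializing, and running the main loop. Throughout I will take the restriction poset of $\O$ as the input data, stored as a directed graph whose vertices are the transfers of $\O$ and whose edges are the cover relations $r \prec e$. This is the same structure that any algorithm in the comparison class is required to compute. By \cref{lem:number of edges of a disk-like transfer sytem are bounded by number of cover relations}, the number of vertices is at most $2C_{\O}+1$, so the whole graph has size $O(C_{\O})$.

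For initialization, $\min \O$ consists of the vertices with no incoming cover relation. These can be found in one pass over the adjacency lists, at cost $O(|\O| + C_{\O}) = O(C_{\O})$. For the main loop, I will not search $Q$ for a minimal element afresh at each step. Instead I will process $Q$ in a topological order of the cover graph, which Kahn's algorithm produces in $O(|\O| + C_{\O})$ time; any such order always presents an element that is minimal among those remaining. When $m$ is processed, the test ``$\forall r \prec m$, $r \in \mc{\O}$ and $r \coverSuccess m$'' runs over exactly the in-edges of $m$. Each membership check is $O(1)$ with a boolean flag. Each success check asks whether $K\cap J \to K \in \O$ and whether $J \to H \in \O$. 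I will make these $O(1)$ by precomputing, as labels on the cover edges, which of the two relevant transfers lie in $\O$. Alternatively, a hash set of $\O$ gives the same constant-time lookup, with the group-theoretic operations counted as unit cost, in line with the uniform cost model over all groups.

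Summed over all $m$, each cover relation is examined exactly once, so the loop costs $O(C_{\O})$. Deleting conjugacy classes $[m]$ only skips work, since conjugate transfers have the same status, and so it never increases the count. Correctness of the order of processing follows from \cref{thm:maximal compatible disk-like via cover relations}: when $m$ is reached, every $r \prec m$ has already been decided. Adding the three phases gives the $O(C_{\O})$ bound.

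For optimality, any algorithm that must compute the restriction poset of $\O$ has to output, or at least traverse, all $C_{\O}$ cover relations, and so it needs $\Omega(C_{\O})$ operations. This matches the upper bound.

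The main obstacle I expect is the constant-time evaluation of $r \coverSuccess m$. Its definition involves the subgroups $K\cap J$, $K$, $J$ and $H$ and membership in $\O$, so I need to argue carefully that this information can be attached to each cover edge while the poset is being constructed, at $O(1)$ amortized cost per edge. Otherwise the success checks could dominate the running time.
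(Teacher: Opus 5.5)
Your proposal is correct and follows essentially the same route as the paper: topologically sort the cover graph of the restriction poset in $O(|\O|+C_{\O})$ time, absorb $|\O|$ into $O(C_{\O})$ via \cref{lem:number of edges of a disk-like transfer sytem are bounded by number of cover relations}, note that the main loop inspects each cover relation once, and obtain optimality from the size of the restriction-poset graph. Your extra care about constant-time success checks (which the paper silently assumes under a unit-cost model) and your phrasing of the lower bound as $\Omega(C_{\O})$ are refinements of that argument, not a different one.
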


\begin{proof}
We shall show that the initialization time and the main while loop of the algorithm both have a worst case time complexity of $O(C_{\O})$.
We note that in the worst case, our group is abelian, and thus we cannot improve performance via clever use of conjugacy classes.

The initialization cost is $O(C_{\O})$.
To initialize our queue and to determine minimal transfers in $\O$ under the restriction order $\leq$, we just need to topologically sort the transfers of $\O$ under $\leq$, i.e. pick a total ordering of the transfers of $\O$ which is compatible with $\leq$.
Optimal algorithms for topologically sorting a graph with $|V|$ vertices and $|E|$ edges have a worst case time complexity of $O(|V|+|E|)$ (see for example \cite{cormen2022introduction}), thus our topological sort has a time complexity of $O(|\O|+ C_{\O} )$. 
However, \cref{lem:number of edges of a disk-like transfer sytem are bounded by number of cover relations} says that in a disklike transfer system $|\O| \leq 2 C_{\O}+ 1$, and thus $O(|\O|+ C_{\O}) = O(C_{\O})$ (i.e. the asymptotic growth of the sum of two things depends solely on the larger).
The main while loop of the algorithm also has a worst case time complexity of $O(C_{\O})$, as in this particular case the algorithm inspects every single cover relation, yielding the claimed total complexity $O(C_{\O})$.

Finally, we can see that this worst case time complexity is optimal amongst all algorithms which must construct the graph of the restriction poset, as this graph has $|\O|$ vertices, and $C_{\O}$ edges, and thus takes $O(|\O|+ C_{\O}) = O(C_{\O})$ time to construct.
\end{proof}

\begin{remark}\label{rem:optimality of alg}
Hypothetically, the best possible algorithm for computing the maximal compatible transfer system would have a time complexity of $O(|\O|)$, as one must decide whether or not each transfer is in $\mc{\O}$ or not.
Given that $|\O| \leq 2 C_{\O}+1$, it is clear that \cref{alg:maximal compatible of disk-like} will not be optimal for certain subsets of groups, and certain subsets of disklike transfer systems.
For example, one can show that the maximal compatible transfer system  of a $C_{p^n}$-transfer system, $\O$, can be computed in $O(\min (|\O|, n))$ time. 
However, it seems likely that \cref{alg:maximal compatible of disk-like} is asymptotically optimal among all algorithms for computing the maximal compatible transfer system of arbitrary disklike $G$-transfer system, i.e., every general algorithm must check some linear proportion of cover relations.
\end{remark}

\begin{remark}
When working with \cref{alg:maximal compatible of disk-like} for $G$-transfer systems, $G$ non-abelian, one could hope to replace $\sub(G)$ with its quotient by conjugacy action, $\sub(G)/G$.
Unfortunately, categorical transfer systems on $\Sub(G)/G$ do not always lift to $G$-transfer systems \cite{balchin2022lifting}.
In particular, there exist disklike $\Sigma_4$-transfer systems which do not lift, and for which the maximal compatible transfer system cannot be computed via lifting the maximal compatible transfer system of its quotient.
\end{remark}

\subsection{Disklike Conjectured Characterization}

Consider the following conjecture.

\begin{conjecture}\label{conj:maximal compatible disk-like via single restriction}
    Let $\O$ be a disklike $G$-transfer system, then the maximal compatible transfer system can be written as the following set:
    \begin{align}
        \mc{\O} = \{ e \in \O \, | \, \forall r < e, r <^S e \},
    \end{align}
    where every restriction is in $\O$. 
    Its complement, $\mc{\O}^c \coloneqq \O\backslash \mc{\O}$, can be written as:
    \begin{align}
        \mc{\O}^c = \{ e \in \O \, | \, \exists r< e, r <^F e \}.
    \end{align}
\end{conjecture}

The disklike condition is important here. 
For example, in \cref{ex:non disk-like counter example to good conjecture} the transfer system is not disklike and this conjecture does not find the correct $\mc{\O}$ as the highest transfer is in the complement but does not directly admit a compatibility failure.
The reader may verify that every example of a disklike $G$-transfer system given in this paper satisfies this conjecture, see for instance \cref{ex:hull preserves compatibility,ex:algorithm example}.
\cite[Proposition 3.2 and Corollary 2.20]{Hill-Meng-Nan} makes it straightforward to prove the result holds for $C_{p^n}$, $p$ prime.
Additionally, it is an immediate consequence of \cref{cor:universal-transfer}, that the conjecture holds if, and only if, it holds for every $\O$ with $1 \to G \in \O$. 

Further, we can consider a minimal set of generators of a transfer system $\O$, say $B$, such that $T(B) = \O$ but if we remove any edge from $B$ it will not generate $\O$. Note that a minimal set of generators will never have a reflexive edge.

\begin{definition}\label{def:complexity}
The \defn{complexity of a $G$-transfer system} $\O$ is defined to be the number of edges in a minimal set of generators of $\O$.
\end{definition}

\begin{definition}[{\cite[Definition 4.1]{adamyk2025minimal}}]
The \emph{complexity of a group $G$} is the maximal complexity of all $G$-transfer systems.
\end{definition}

We experimentally verified the conjecture (using \cite{GAP}) under the following conditions.

\begin{proposition}\label{prop:experimental verification of conj}
\cref{conj:maximal compatible disk-like via single restriction} holds for all disklike $G$-transfer systems $\O$ such that,
\begin{enumerate}[\emph{(\arabic*)}]
    \item $Order(G)\leq 15$, and $e\to G \in \O$, 
    \item $Order(G)\leq 63$, $e\to G \in \O$, and $Complexity(\O)\leq 2$, or
    \item $G=\Sigma_n$ for $n\leq 6$, and $Complexity(\O)\leq 2$. 
\end{enumerate}
\end{proposition}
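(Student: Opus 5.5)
This is a computational statement, so the proof is an exhaustive machine verification in GAP \cite{GAP}, organized so that each of the three regimes is a finite and feasible enumeration. For each group $G$ in the relevant range (all groups of order at most $15$ for (1), at most $63$ for (2), and $\Sigma_n$ with $n\leq 6$ for (3)), we compute $\Sub(G)$ together with its conjugation action. We then enumerate the disklike $G$-transfer systems $\O$ in the required class. For each such $\O$ we compute two sets:
\begin{itemize}
\item $\mc{\O}$, via \cref{alg:maximal compatible of disk-like}, which is justified by \cref{thm:maximal compatible disk-like via cover relations};
\item the conjectured set $\{e\in\O \mid \forall r<e,\ r<^S e\}$, read off directly from \cref{def:compatibility successes and failures}.
\end{itemize}
The claim is that these two sets agree in every case.

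The enumeration itself uses \cref{prop:disklikecharacterization} and \cref{prop:transfer system generated by relations}. A disklike $\O$ is $T(D)$ for a set $D$ of transfers $H\to G$. We may take $D$ closed under conjugation, and by \cref{lem:disk-like generators closed under intersection} also closed under intersection, so its elements are indexed by subsets of conjugacy classes of subgroups.
\begin{itemize}
\item In cases (2) and (3), complexity at most $2$ means we only range over pairs of conjugacy classes $\{[H_1],[H_2]\}$. For each pair we form $T(\{H_1\to G, H_2\to G\})$ as $\Comp(\Res(\Conj(-)))$. This is quadratic in the number of conjugacy classes of subgroups and is cheap even for $\Sigma_6$.
\item The hypothesis $e\to G\in\O$ (the universal transfer $1\to G$) in (1) and (2) cuts the search further. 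Since $1\to G\in\O$ forces $1\to H$ for every $H$, the disklike systems containing it are determined by which $H\to G$ they contain. For (1) it is therefore enough to enumerate all restriction- and intersection-closed families of conjugacy classes of subgroups containing $1$, which is small for $|G|\leq 15$.
\end{itemize}

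The main obstacle is making sure the enumeration is complete and correct, not the comparison step. Two checks address this. First, we verify that each generated relation really is a transfer system, i.e.\ that it is closed under restriction after composition, as \cref{prop:transfer system generated by relations} guarantees. Second, we deduplicate systems up to equality of edge sets, so that generator sets producing the same $\O$ are not double-counted or missed.

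As a sanity check on the implementation of $\mc{\O}$, we also compute it independently from the subgroup-level formula \eqref{eqn:M(O) formula} of \cref{prop:computing Om via restricting edges} and confirm that both methods give the same result. Once every case in the three families has been run with no discrepancy between $\mc{\O}$ and the conjectured set, the proposition follows.
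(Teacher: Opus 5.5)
Your plan matches the paper, which supports this proposition only by reporting an exhaustive GAP verification over exactly these three families. Your enumeration scheme is sound: by \cref{prop:disklikecharacterization} and \cref{lem:disk-like generators closed under intersection}, disklike systems correspond bijectively to conjugation-invariant, intersection-closed families of subgroups containing $G$ (namely those $H$ with $H\to G\in\O$), and a disklike system of complexity $\leq 2$ is generated by at most two transfers to $G$, since each generator can be replaced by a transfer to $G$ in $\O$ that restricts onto it. The extra condition ``restriction-closed'' in case (1) is not needed and should be dropped.
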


\begin{remark}
The most general case we were able to prove for the conjecture is: for every abelian group $G$, every disklike transfer system of complexity $\leq 2$ with the universal transfer $1 \to G$ satisfies \cref{conj:maximal compatible disk-like via single restriction}.
We were unable to extend this result to transfer systems of arbitrary complexity, but one plausible bridging lemma might be: If $\O$ is a (categorical) disklike transfer system for which every transfer system contained in $\O$ of complexity $\leq 2$ satisfies \cref{conj:maximal compatible disk-like via single restriction}, then $\O$ satisfies \cref{conj:maximal compatible disk-like via single restriction}.
\end{remark}

The following example demonstrates that there exist disklike categorical transfer systems which do not satisfy \cref{conj:maximal compatible disk-like via single restriction}.
This implies the conjecture may only hold for disklike $G$-transfer systems, and that there is no purely diagrammatic proof as there was previously.
The reader may verify that the proofs of \cref{prop:computing Om via restricting edges} and \cref{thm:maximal compatible disk-like via cover relations} also apply to categorical transfer systems, which lets us apply them here.

\begin{example}\label{ex:categorical disk-like transfer system is not good}
Consider the following disklike categorical transfer system $\O$ on a bounded lattice, say $P$, with Hasse diagram indicated by dotted edges. 
Using \cref{thm:maximal compatible disk-like via cover relations} we can compute that the only non-reflexive transfer in $\mc{\O}$ is $r'$.
Observe that $\mc{\O}$ does not satisfy \cref{conj:maximal compatible disk-like via single restriction} as $r'<^Fr<e$ implies $e\in \mc{\O}^c$, but there is no edge $q$ such that $q<^Fe$.
\[\begin{tikzcd}[column sep = tiny, row sep = tiny,nodes={inner sep=0.05cm},]
	&& \bullet \\
	& \bullet && \bullet \\
	\bullet \\
	& \bullet
	\arrow[dotted, no head, from=2-2, to=1-3]
	\arrow["e"', from=2-4, to=1-3]
	\arrow[dotted, no head, from=3-1, to=2-2]
	\arrow["r"', from=4-2, to=2-2]
	\arrow[dotted, no head, from=4-2, to=2-4]
	\arrow["{r'}", from=4-2, to=3-1, very thick, cblue]
\end{tikzcd}\]
\end{example}

\section{Maximal Compatibility is Preserved by Inflation}
\label{sec:inflation}

In this section, we fix a normal subgroup \(N\) and associated quotient map \(p\colon G\to G/N\). The set-theoretic inverse canonically identifies the subgroup lattice of \(G/N\) with those subgroups of \(G\) which contain \(N\), and we can use this to pull back any transfer system on \(G/N\).

\subsection{Inflation Formulae}\label{ssec:inflation characterization}

We first recall the definition of $p^*$, denoted $p_L^{-1}$ in \cite{RubinOperadicLifts}.

\begin{definition}[{\cite[Section 5.1]{RubinOperadicLifts}}]\label{def:inflation}
Let
\[
p^{-1}(\O)=\{p^{-1}K\to p^{-1}H \,\mid \, K\to H \text{ in }\O\}.
\]
The \defn{inflation} of \(\O\), $p^*\O$, is the transfer system generated by \(p^{-1}\cO\):
\[
    p^*\O \coloneqq  T( p^{-1}\O).
\]
\end{definition}

Rubin simplifies this definition. 

\begin{lemma}[{\cite[Lemma 5.7]{RubinOperadicLifts}}]
\label{lem:pre-image closed under conjugation}
For any \(G/N\)-transfer system \(\O\), we have 
\[
p^*\O = \Comp(\Res(p^{-1}\O)).
\]
\end{lemma}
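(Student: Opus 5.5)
By \cref{prop:transfer system generated by relations}, $p^*\O = T(p^{-1}\O) = \Comp(\Res(\Conj(\Refl(p^{-1}\O))))$. So the plan is to show that the reflexive and conjugation closures are already superfluous for the relation $p^{-1}\O$.

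\emph{Conjugation.} Let $p^{-1}K\to p^{-1}H$ be in $p^{-1}\O$ and let $g\in G$. Since $p$ is a surjective homomorphism, we have $g(p^{-1}K)g^{-1} = p^{-1}(p(g)Kp(g)^{-1})$, and likewise for $H$. Because $\O$ is closed under conjugation, $p(g)Kp(g)^{-1}\to p(g)Hp(g)^{-1}$ lies in $\O$. Hence the conjugated arrow already lies in $p^{-1}\O$, so $\Conj(p^{-1}\O)=p^{-1}\O$.

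\emph{Reflexivity.} The relation $p^{-1}\O$ contains $p^{-1}H\to p^{-1}H$ for each $H\le G/N$. For a general $L\le G$, restrict the arrow $N\to N$, which is $p^{-1}$ of the trivial reflexive arrow $1\to 1$. Alternatively, restrict the arrow $G\to G$ along $L\le G$: this gives $L\cap G\to L$, that is, $L\to L$. So $\Res(p^{-1}\O)$ already contains every reflexive arrow, and applying $\Refl$ adds nothing once we pass to restrictions.

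Combining the two steps gives
\[
T(p^{-1}\O)=\Comp(\Res(\Conj(\Refl(p^{-1}\O))))=\Comp(\Res(p^{-1}\O)).
\]
To justify reordering $\Refl$ past $\Res$, we use the clause of \cref{prop:transfer system generated by relations} that $\Refl$ commutes with the other operators. Specifically, $\Res(\Refl(B))=\Res(B)\cup\{\text{reflexive arrows}\}$, and the reflexive arrows are already contained in $\Res(p^{-1}\O)$.

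The main subtlety is the conjugation identity. It requires $g(p^{-1}K)g^{-1}=p^{-1}(p(g)Kp(g)^{-1})$, which is a short element chase: $x\in p^{-1}K$ if and only if $p(x)\in K$, and $p(gxg^{-1})=p(g)p(x)p(g)^{-1}$. The rest is bookkeeping.
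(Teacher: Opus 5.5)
Your argument is correct and is essentially the intended one: $p^{-1}\O$ is already closed under conjugation, and $\Res(p^{-1}\O)$ already contains every reflexive arrow, so the formula of \cref{prop:transfer system generated by relations} collapses to $\Comp(\Res(p^{-1}\O))$ (the paper itself just cites Rubin, and the lemma's label names exactly your conjugation observation). One small correction: restricting $N\to N$ only produces $L\to L$ for $L\le N$, so drop that first suggestion and keep the restriction of $G\to G$ along $L\le G$, which does give all reflexive arrows.
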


We can further simplify this producing an almost disklike condition.
This lemma was also shown independently in \cite[Proposition 1.20]{aitken2026equivariant}.

\begin{lemma}\label{lem:in inflation iff restriction}
For any \(G/N\)-transfer system \(\O\), we have 
\[
p^*\O = \Res(p^{-1}\O).
\]
Moreover, a transfer $K\to H \in p^*\O$, if and only if $KN \to HN \in p^{-1}\O$ and $K=KN\cap H$.
\end{lemma}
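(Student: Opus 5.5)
The plan is to show directly that $\Res(p^{-1}\O)$ is already closed under composition. Combined with \cref{lem:pre-image closed under conjugation}, this gives $p^*\O=\Comp(\Res(p^{-1}\O))=\Res(p^{-1}\O)$. The explicit membership criterion is proved first, since it makes the composition argument a short computation.

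\emph{The criterion.} Suppose $K\to H\in\Res(p^{-1}\O)$. Then there is a transfer $A\to B$ in $p^{-1}\O$, so $N\le A\le B$, and a subgroup $L\le B$ with $K=A\cap L$ and $H=L$.
\begin{itemize}
\item Since $N\le A$, Dedekind's modular law gives $KN=(A\cap H)N=A\cap HN$.
\item Restricting $p(A)\to p(B)$ in $\O$ along $p(HN)\le p(B)$ yields $p(A\cap HN)\to p(HN)$ in $\O$. Hence $KN\to HN\in p^{-1}\O$.
\item From $K=A\cap H$ and $K\le KN\le A$ we get $KN\cap H\le A\cap H=K$, so $K=KN\cap H$.
\end{itemize}
Conversely, if $KN\to HN\in p^{-1}\O$ and $K=KN\cap H$, then restricting $KN\to HN$ along $H\le HN$ gives exactly $K\to H$. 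This proves the ``moreover'' statement.

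\emph{Closure under composition.} Take $L\to K$ and $K\to H$ in $\Res(p^{-1}\O)$. By the criterion, $LN\to KN$ and $KN\to HN$ lie in $p^{-1}\O$. Since $p^{-1}$ preserves composition, $LN\to HN\in p^{-1}\O$. It remains to check $L=LN\cap H$. Because $LN\le KN$,
\[
LN\cap H=LN\cap KN\cap H=LN\cap K=L,
\]
using first $K=KN\cap H$ and then $L=LN\cap K$. So $L\to H$ satisfies the criterion.

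The only subtle point is the modular-law step $(A\cap H)N=A\cap HN$, which needs $N\le A$. The rest is bookkeeping with the criterion. The inclusion $\Res(p^{-1}\O)\subseteq p^*\O$ is clear, since $p^*\O$ is a transfer system containing $p^{-1}\O$.
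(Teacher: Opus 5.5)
Your proposal is correct and follows the paper's proof essentially step for step. You first establish the membership criterion, where your modular-law identity $(A\cap H)N=A\cap HN$ for $N\le A$ is the paper's elementwise verification that $KN=PN\cap HN$; you then use the criterion to show $\Res(p^{-1}\O)$ is closed under composition and invoke \cref{lem:pre-image closed under conjugation}, with your explicit check of $K=KN\cap H$ in the forward direction filling in a step the paper leaves implicit.
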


\begin{proof}
We prove the second part first.
By the definition of restriction, if $KN \to HN \in p^{-1}\O$ and $KN\cap H=K$, then $K \to H \in \Res(p^{-1}\O)$.
Suppose $K\to H$ is in $\Res(p^{-1}\O)$.
This implies there is some $PN\to QN \in p^{-1}\O$ with $H\leq QN$ and $K=PN\cap H$. 
Then, necessarily, $KN\leq PN$ and $HN\leq QN$, giving us the following picture:

\[
\begin{tikzcd}[column sep = small, row sep = small]
    &           & QN &            \\
    & HN        &    & PN \ar[ul] \\
  H &           & KN &            \\
    & K \ar[ul] &    &
\end{tikzcd} 
\]
Observe that $KN\leq PN\cap HN$, as both $N$ and $K$ are subgroups of $PN\cap HN$.
For the converse containment, let $p\in P$, $h\in H$ and $n_1,n_2\in N$ be such that $pn_1=hn_2$ (i.e. a generic element of $PN\cap HN$). 
Then $h=pn_1n_2^{-1}\in PN\cap H$, which is equal to $K$ by assumption. 
Hence, $hn_2\in KN$.
As $hn_2$ was chosen to be arbitrary within $PN \cap HN$, this yields $PN\cap HN \leq KN$, giving the desired equality $KN = PN\cap HN$, which shows that \(KN\to HN\) is in \(p^{-1}\cO\).

Finally, we use this to show that $\Res(p^{-1}\O) = \Comp(\Res(p^{-1}\O))$.
Suppose $K\to H \in \Comp(\Res(p^{-1}\O))$ admits a decomposition as a composite of $K\to X_1, X_1 \to X_2, \ldots, X_n \to H$ in $\Res(p^{-1}\O)$.
Given the preceding characterization of $\Res(p^{-1}\O)$, we have that $KN\to X_1N, X_1N \to X_2N, \ldots, X_nN \to HN \in p^{-1}\O$, yielding the composite $KN \to HN \in p^{-1}\O$.
All that remains is to show $K=KN \cap H$.
Given $K = KN \cap X_1, X_1 = X_1N \cap X_2, \ldots,$ and $X_n = X_nN \cap H$, we observe that $K = KN \cap X_1 =  KN \cap (X_1N \cap X_2) = \ldots = KN \cap (X_1N \cap X_2N \cap \ldots \cap X_nN \cap H)$.
However, as $KN\leq X_1N \leq \ldots \leq X_nN$, this implies $K=KN \cap H$ as required.
\end{proof}

One critical consequence is: 
the portion of $p^*\O$ lying within the interval from \(N\) to \(G\) is exactly $p^{-1}\O$.

\begin{corollary}
\label{cor:p^* in the interval}
For $N \leq H\leq K \leq G$, the transfer $H\to K$ is in $p^*\O$ if and only if $H\to K$ is in $p^{-1}\O$ if and only if $H/N \to K/N $ is in $\O$.
\end{corollary}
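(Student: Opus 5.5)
The plan is to apply the characterization in \cref{lem:in inflation iff restriction} directly to subgroups $H \leq K$ that both contain $N$. The second equivalence, between $H\to K \in p^{-1}\O$ and $H/N \to K/N \in \O$, is essentially definitional. The map $X \mapsto p^{-1}X$ is a bijection between $\Sub(G/N)$ and the subgroups of $G$ containing $N$, with inverse $L \mapsto L/N$. Hence $H \to K$ lies in $p^{-1}\O = \{p^{-1}A \to p^{-1}B \mid A \to B \in \O\}$ exactly when $A = H/N$ and $B = K/N$ with $H/N \to K/N \in \O$.

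For the first equivalence, one direction is immediate. We have $p^{-1}\O \subseteq T(p^{-1}\O) = p^*\O$, so any $H \to K$ in $p^{-1}\O$ also lies in $p^*\O$.

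For the converse, suppose $H \to K \in p^*\O$ with $N \leq H \leq K$. The ``moreover'' clause of \cref{lem:in inflation iff restriction} gives $HN \to KN \in p^{-1}\O$ (and $H = HN \cap K$). Since $N \leq H$ and $N \leq K$, we have $HN = H$ and $KN = K$. Therefore $H \to K \in p^{-1}\O$.

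There is no real obstacle here, since all the work was done in \cref{lem:in inflation iff restriction}. The only point needing care is using normality of $N$, so that $HN$ and $KN$ are subgroups, and noting that $HN = H$ once $N \leq H$.
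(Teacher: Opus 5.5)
Your proposal is correct and follows the paper's approach: the corollary is stated there as an immediate consequence of \cref{lem:in inflation iff restriction}, obtained by setting $HN=H$ and $KN=K$ once $N\leq H\leq K$, and then using that $p^{-1}$ is injective on subgroups to pass to $H/N\to K/N\in\O$. Your extra remark that $p^{-1}\O\subseteq T(p^{-1}\O)$ for the easy direction is fine but not needed, since the lemma already gives both directions.
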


Here is an illustrative example of these results.

\begin{example}\label{ex:inflation example}
Let $q$ and $r$ be distinct primes, and let $p:C_{{q}^3{r}^3}\onto C_{{q}^2{r}^2}$ be the projection with kernel $C_{{q}{r}}$.
In the below diagram we display a disklike $C_{{q}^2{r}^2}$-transfer system $\O$ on the left, and the bold blue edges make up its maximal compatible transfer system $\mc{\O}$, which was computed in \cref{ex:algorithm example}.
On the right is the inflation $p^*\O$, and $p^*\mc{\O}$ is made up by the bold blue edges, computed using \cref{thm:maximal compatible disk-like via cover relations}.
The green dashed line borders the copy of $\O$ as the interval $[C_{{q}{r}}, C_{{q}^3{r}^3}]$.

\begin{center}
\vspace{0.0cm}
\begin{tikzcd}[ampersand replacement = \&,nodes={inner sep=0.1cm}, column sep = small, row sep = small]
\&\&\& \bullet \\
\&\&\bullet \&\& \bullet\\
\&\bullet \&\& \bullet\&\& \bullet\&\&\\ 
\phantom{\bullet} \&\&\bullet \&\& \bullet\&\& \phantom{\bullet}\\ 
\&\phantom{\bullet} \&\& \bullet\&\& \phantom{\bullet}\&\&\\ 
\&\&\phantom{\bullet} \&\& \phantom{\bullet}\\
\&\&\& \phantom{\bullet}
\arrow[from=2-3, to=1-4, outer sep=-.5]
\arrow[from=3-2, to=1-4, curve={height=-12pt}]
\arrow[from=4-5, to=1-4, bend right=10]
\arrow[from=3-4, to=2-5, outer sep=-.5]
\arrow[from=4-5, to=2-3, bend left]
\arrow[from=4-3, to=2-5, bend right]
\arrow[from=5-4, to=2-3, bend left=20]
\arrow[from=5-4, to=1-4, bend left]
\arrow[from=5-4, to=2-5, bend right=18]
\arrow[from=5-4, to=3-2, curve={height=-12pt}]
\arrow[from=4-5, to=2-5, bend right=20]
\arrow[from=3-2, to=2-3, very thick, crossing over, cblue]
\arrow[from=4-5, to=3-6, very thick, crossing over, cblue]
\arrow[from=4-3, to=3-4, very thick, crossing over, cblue]
\arrow[from=5-4, to=3-6, curve={height=12pt}, very thick, crossing over, cblue]
\arrow[from=5-4, to=4-3, very thick, outer sep=-1.5, cblue]
\arrow[from=5-4, to=4-5, very thick, outer sep=-.5, cblue]
\arrow[from=5-4, to=3-4, very thick, crossing over, pos=.35, cblue]
\arrow[from=4-5, to=3-4, very thick, crossing over, cblue]
\end{tikzcd}
\raisebox{0.5cm}{$\xrightarrow{p^*}\quad\quad$}
\begin{tikzcd}[ampersand replacement = \&,nodes={inner sep=0.05cm}, column sep = small, row sep = small,
every matrix/.append style = {name=m},
remember picture]
\&\&\& \bullet \\
\&\&\bullet \&\& \bullet\\
\&\bullet \&\& \bullet\&\& \bullet\&\&\\ 
{\bullet} \&\&\bullet \&\& \bullet\&\& {\bullet}\\ 
\&{\bullet} \&\& \bullet\&\& {\bullet}\&\&\\ 
\&\&{\bullet} \&\& {\bullet}\\
\&\&\& {\bullet}
\arrow[from=2-3, to=1-4, outer sep=-.5]
\arrow[from=3-2, to=1-4, curve={height=-12pt}]
\arrow[from=4-5, to=1-4, bend right=10]
\arrow[from=3-4, to=2-5, outer sep=-.5]
\arrow[from=4-5, to=2-3, bend left]
\arrow[from=4-3, to=2-5, bend right]
\arrow[from=5-4, to=2-3, bend left=20]
\arrow[from=5-4, to=1-4, bend left]
\arrow[from=5-4, to=2-5, bend right=18]
\arrow[from=5-4, to=3-2, curve={height=-12pt}]
\arrow[from=4-5, to=2-5, bend right=20]
\arrow[from=3-2, to=2-3, very thick, crossing over, cblue]
\arrow[from=4-5, to=3-6, very thick, crossing over, cblue]
\arrow[from=4-3, to=3-4, very thick, crossing over, cblue]
\arrow[from=5-4, to=3-6, curve={height=12pt}, very thick, crossing over, cblue]
\arrow[from=5-4, to=4-3, very thick, outer sep=-1.5, cblue]
\arrow[from=5-4, to=4-5, very thick, outer sep=-.5, cblue]
\arrow[from=5-4, to=3-4, very thick, crossing over, pos=.35, cblue]
\arrow[from=4-5, to=3-4, very thick, crossing over, cblue]
\arrow[from=5-6, to=4-7, very thick, cblue]
\arrow[curve={height=-12pt}, from=6-3, to=4-1]
\arrow[from=6-3, to=5-2, very thick, cblue]
\arrow[curve={height=12pt}, from=6-5, to=4-7, very thick, cblue]
\arrow[from=6-5, to=5-6, very thick, cblue]
\end{tikzcd}
\begin{tikzpicture}[
remember picture, overlay]
\myroundpoly[ao, very thick, dashed]{m-1-4,m-3-6,m-5-4,m-3-2}{0.5cm};
\end{tikzpicture}
\end{center}
Applying \cref{cor:p^* in the interval,lem:in inflation iff restriction} in turn, we observe that the pre-image has embedded isomorphic copies of $\O$ and $\mc{\O}$ into the interval $[C_{{q}{r}}, C_{{q}^3{r}^3}]$, and that every transfer outside of this interval, for both $p^*\O$ and $p^*\mc{\O}$, is generated by restriction.

\end{example}

\subsection{Preservation}\label{sec:Preservation}

Having characterized inflation, we can now prove \cref{thm:inflation}. 
We start by proving that $p^*$ preserves both disklike and saturated transfer systems.

\begin{proposition}[\cref{thm:inflation}(1)]
\label{prop: inflation preserves disklike}
If $\O$ is a disklike transfer system on \(G/N\), then $p^*\O$ is also disklike.
\end{proposition}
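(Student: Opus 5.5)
The plan is to verify Rubin's criterion, \cref{prop:disklikecharacterization}. We show that every transfer in $p^*\O$ is a restriction of a transfer in $p^*\O$ whose target is $G$.

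The first step uses \cref{lem:in inflation iff restriction}. Let $K\to H$ be a transfer in $p^*\O$. Then $KN\to HN$ lies in $p^{-1}\O$ and $K = KN\cap H$. By \cref{cor:p^* in the interval}, this means $KN/N\to HN/N$ is a transfer in $\O$.

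The second step uses the disklike hypothesis on $\O$. Again by \cref{prop:disklikecharacterization}, there is a transfer $\bar A\to G/N$ in $\O$ with $KN/N = \bar A\cap HN/N$. Write $A = p^{-1}\bar A$, a subgroup containing $N$. Then $A\to G$ lies in $p^{-1}\O\subseteq p^*\O$, and since preimages commute with intersections, $KN = A\cap HN$.

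The final step is the intersection computation. Restricting $A\to G$ to $H$ gives the transfer $A\cap H\to H$ in $p^*\O$. It remains to check $A\cap H = K$:
\[
A\cap H = A\cap (HN\cap H) = (A\cap HN)\cap H = KN\cap H = K.
\]
Here we use $H\le HN$, and the last equality is the condition from \cref{lem:in inflation iff restriction}. So every transfer of $p^*\O$ is a restriction of $A\to G\in p^*\O$, and $p^*\O$ is disklike.

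The only potentially delicate point is the passage between $\O$ and $p^{-1}\O$ for the disklike generator and the intersection. This is handled by the lattice isomorphism between $\Sub(G/N)$ and the interval $[N,G]$, which preserves meets. No real obstacle is expected.
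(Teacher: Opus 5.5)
Your proof is correct and takes the same route as the paper: Rubin's characterization, the identity $p^*\O = \Res(p^{-1}\O)$, and the fact that every transfer of $p^{-1}\O$ is a restriction of an inflated generator $A \to G$. Your computation $A\cap H = (A\cap HN)\cap H = KN\cap H = K$ just makes explicit the ``restriction of a restriction is a restriction'' step that the paper's two-line proof leaves implicit.
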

\begin{proof}

Since $\O$ is disklike and by definition of $p^{-1}$, then every transfer in $\O$ and $p^{-1}\O$ is a restriction of a transfer with target $G$.
Recall that $\Res(p^{-1}\O) = p^*\O$, therefore every transfer of $p^*\O$ is a restriction of an transfer with target $G$ as desired. \end{proof}

\begin{proposition}[\cref{thm:inflation}(2)]\label{prop: inflation preserves saturation}
If $\O$ is saturated, then so is $p^*\O$.
\end{proposition}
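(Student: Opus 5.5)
We verify saturation of $p^*\O$ directly, using the explicit description from \cref{lem:in inflation iff restriction}: a transfer $A\to B$ lies in $p^*\O$ exactly when $AN\to BN\in p^{-1}\O$ and $A=AN\cap B$. So fix subgroups $L\leq K\leq H\leq G$ with $L\to H\in p^*\O$. We must show $K\to H\in p^*\O$, which by the lemma amounts to two facts: $KN\to HN\in p^{-1}\O$, and $K=KN\cap H$.

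For the first fact, we pass to the interval above $N$. From $L\to H\in p^*\O$ we get $LN\to HN\in p^{-1}\O$, and $LN\leq KN\leq HN$ all contain $N$. By \cref{cor:p^* in the interval}, this means $LN/N\to HN/N$ lies in $\O$, with $LN/N\leq KN/N\leq HN/N$. Saturation of $\O$ then gives $KN/N\to HN/N\in\O$. Applying \cref{cor:p^* in the interval} again yields $KN\to HN\in p^{-1}\O$.

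The second fact, $K=KN\cap H$, is the step I expect to be the main obstacle, since it is not automatic for arbitrary $K\leq H$. The inclusion $K\leq KN\cap H$ is clear. For the reverse, I will use that $L=LN\cap H$. The tempting argument that $N\cap H\leq L\leq K$ gives $KN\cap H=K(N\cap H)=K$ is not fully correct, because the modular law $KN\cap H=K(N\cap H)$ needs $K\leq H$, which holds here. So I proceed as follows. Take $kn\in H$ with $k\in K$ and $n\in N$. Since $k\in K\leq H$, we get $n=k^{-1}(kn)\in H\cap N$. Also $H\cap N\leq LN\cap H=L\leq K$. Therefore $kn\in K$, so $KN\cap H=K$.

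Combining the two facts with \cref{lem:in inflation iff restriction}, we conclude $K\to H\in p^*\O$, and hence $p^*\O$ is saturated. Saturation is only needed for transfers $L\to H$ with $L\leq K\leq H$, and every such transfer in $p^*\O$ has the form handled above, so no further cases arise.
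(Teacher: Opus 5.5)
Your proof is correct and follows essentially the same route as the paper: use \cref{lem:in inflation iff restriction} and \cref{cor:p^* in the interval} to pass to the interval above $N$, where saturation of $\O$ gives $KN\to HN$. Then check $KN\cap H=K$ elementwise via $H\cap N\leq LN\cap H=L\leq K$. Your aside on the modular law is muddled: since $K\leq H$, the identity $KN\cap H=K(N\cap H)=K$ is in fact a valid shortcut, but this does not affect the argument.
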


\begin{proof}
Let $K \leq J \leq H \leq G$.
Suppose $\O$ is saturated, and let $K\to J$, $K\to H$ be transfers in $p^*\O$, yielding the left diagram of \cref{eqn:inflated saturation failure?}.
Applying \cref{lem:in inflation iff restriction}, we can consider the analogous diagram in $p^{-1}\O$. 
Given $\O$ is saturated, the transfer $JN\to HN$ must be in $p^{-1}\O$.
\begin{equation}\label{eqn:inflated saturation failure?}
\begin{tikzcd}
K & J & H
\arrow[from=1-1, to=1-2, "p^*\O"']
\arrow[curve={height=-12pt}, from=1-1, to=1-3, "p^*\O"]
\arrow[dashed, from=1-2, to=1-3]
\end{tikzcd}
\quad
\xrightarrow{-\lor N}
\quad
\begin{tikzcd}
	KN & JN & HN
	\arrow[from=1-1, to=1-2, "p^{-1}\O"']
	\arrow[curve={height=-12pt}, from=1-1, to=1-3, "p^{-1}\O"]
	\arrow[dashed, from=1-2, to=1-3, "p^{-1}\O"']
\end{tikzcd}
\end{equation}

It remains to show the transfer $JN\to HN$ restricts to the transfer $J\to H$, i.e. $JN\cap H=J$.
Since $J \leq H$, $J \leq JN \cap H$.
Let $h \in JN \cap H$, i.e. $h=jn$ for some $j\in J$, $n\in N$.
Then we have $n=j^{-1}h\in H$ and by assumption $J\leq H$, which together gives $n\in H\cap N\leq H\cap KN$.
Since $K \to H \in p^*\O$, \cref{lem:in inflation iff restriction} shows that $H\cap KN=K$.
Hence $n\in K$ and $h=jn\in J$.
Therefore $J = JN \cap H$ as desired. \end{proof}

Inflation also preserves inclusions.
\begin{lemma}\label{lem:inflation monotone}
Suppose $\O_1, \O_2$ are $G/N$ transfer systems such that $\O_1 \subseteq \O_2$. 
Then $p^*\O_1\subseteq p^*\O_2$.
\end{lemma}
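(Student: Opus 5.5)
This is a direct monotonicity argument, and there are two natural routes.

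\emph{Route via the definition.} First observe that $p^{-1}$ is monotone on binary relations. If $\O_1 \subseteq \O_2$, then every generator $p^{-1}K \to p^{-1}H$ of $p^{-1}\O_1$ comes from some $K \to H \in \O_1 \subseteq \O_2$. Hence $p^{-1}\O_1 \subseteq p^{-1}\O_2 \subseteq p^*\O_2$. By \cref{def:generated tranfer system}, $T(p^{-1}\O_1)$ is the intersection of all $G$-transfer systems containing $p^{-1}\O_1$. The transfer system $p^*\O_2$ is one of these, so $p^*\O_1 = T(p^{-1}\O_1) \subseteq p^*\O_2$.

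\emph{Route via the explicit description.} Alternatively, one can use \cref{lem:in inflation iff restriction} elementwise. Let $K \to H \in p^*\O_1$. Then $KN \to HN \in p^{-1}\O_1$ and $K = KN \cap H$. Monotonicity of $p^{-1}$ gives $KN \to HN \in p^{-1}\O_2$. The same lemma then yields $K \to H \in p^*\O_2$.

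I expect no real obstacle. The only point needing any care is that $p^{-1}$ is monotone, and this is immediate from its definition as an image of the edge set.
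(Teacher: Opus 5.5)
Your proof is correct, and your second route is essentially the paper's argument: the paper notes that $p^*\O = \Res(p^{-1}\O)$ and that both $p^{-1}$ and $\Res$ are monotone, which is what your elementwise use of \cref{lem:in inflation iff restriction} unpacks. Your first route, via minimality of the generated transfer system $T(-)$, is an equally valid and more self-contained alternative, since it uses only \cref{def:generated tranfer system} and not the explicit description of $p^*\O$.
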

\begin{proof}
This follows from the pre-image, and $\Res$ each satisfying the same condition.
\end{proof}

To prove the remaining preservation results, 
we will need the following two group theoretic facts which are well known but we provide proofs for convenience.
\begin{lemma}\label{lem: intersection of join implies distributive N}
Let $A,B \leq G$ and $N \trianglelefteq G$.
If $AB\cap AN=A$, then $(A\cap B)N=AN\cap BN$.
\end{lemma}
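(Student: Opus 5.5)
The inclusion $(A\cap B)N\subseteq AN\cap BN$ is immediate, since $A\cap B\leq A$ and $A\cap B\leq B$, and both $AN$ and $BN$ are subgroups because $N$ is normal. So the work is the reverse inclusion. Here I will read the hypothesis $AB\cap AN=A$ as an equality of subsets of $G$, since $AB$ need not be a subgroup.

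Let $x\in AN\cap BN$ and write $x=an=bm$ with $a\in A$, $b\in B$, $n,m\in N$. The plan is to show that the $B$-component $b$ of $x$ already lies in $A$, so that $x=bm$ exhibits $x$ as an element of $(A\cap B)N$.

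First, $b=anm^{-1}$, and $nm^{-1}\in N$, so $b\in AN$. Second, since $1\in A$, we have $B\subseteq AB$, so $b\in AB$. Hence $b\in AB\cap AN=A$ by hypothesis. Therefore $b\in A\cap B$, and $x=bm\in (A\cap B)N$, which gives $AN\cap BN\subseteq (A\cap B)N$.

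I do not expect a real obstacle. The only point needing care is choosing the right element to test against the hypothesis. Testing $a$ goes nowhere, since $a\in A$ is already known. Testing $b$ works, because $b$ automatically lies in $AB$ and, through the equation $an=bm$, also in $AN$.
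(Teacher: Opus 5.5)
Your proof is correct and matches the paper's argument. Both write an element of $AN\cap BN$ as $an=bm$, note that $b=anm^{-1}\in AN$ and $b\in B\subseteq AB$, conclude $b\in A\cap B$ from the hypothesis, and then read off $bm\in(A\cap B)N$ (you also spell out $B\subseteq AB$ via $1\in A$, which the paper leaves implicit).
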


\begin{proof}
It is immediate that $(A\cap B)N \leq AN\cap BN$.
Suppose $a\in A$, $b\in B$, and $n_1,n_2\in N$ such that $bn_1=an_2$. 
Then, we have that $b=an_2n_1^{-1}$.
Hence, $b\in AN$, and in particular, $b\in AB\cap AN=A$.
Thus, we have shown $bn_1\in (A\cap B)N$.
Therefore $(A \cap B) N = AN \cap BN$. \end{proof}
\begin{lemma}\label{lem:group product containment}
Let $A,B \leq G$ and $N \trianglelefteq G$.
If $N\leq A \leq BN$, then $A=(A\cap B)N$.
\end{lemma}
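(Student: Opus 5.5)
This is a Dedekind modular law argument; the plan is to prove the two inclusions separately.

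First, $(A\cap B)N \leq A$. Both $A\cap B$ and $N$ are subgroups of $A$, the latter because $N\leq A$ by hypothesis. Since $N$ is normal, $(A\cap B)N$ is a subgroup, and it is contained in $A$.

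Second, $A \leq (A\cap B)N$. Take $a\in A$. Since $A\leq BN$, write $a=bn$ with $b\in B$ and $n\in N$. Then $b=an^{-1}$. Here $a\in A$, and $n^{-1}\in N\leq A$, so $b\in A$. Hence $b\in A\cap B$, and $a=bn\in (A\cap B)N$.

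Combining the two inclusions gives $A=(A\cap B)N$. There is no real obstacle. The only points to watch are that normality of $N$ is what makes $(A\cap B)N$ a subgroup, and that it lets us write elements of $BN$ as $bn$ in either order. The key step is using $N\leq A$ to absorb the factor $n^{-1}$ into $A$.
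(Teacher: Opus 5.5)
Your proposal is correct and follows the paper's proof essentially verbatim. The paper gets $(A\cap B)N\le A$ from $A\cap B,\,N\le A$. For the reverse inclusion it writes $a=bn$ and uses $N\le A$ to conclude $b=an^{-1}\in A\cap B$.
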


\begin{proof}
    Let $A,B$, and $N$ be as above such that $N \leq A \leq BN$.
    Since $(A\cap B),N \leq A$, we have that $(A\cap B)N \leq A$.
    Consider $a\in A$, since $A\leq BN$ and $N$ is normal, we have that there is some $b\in B$, $n\in N$ such that $a=bn$.
    Multiplying by \(n^{-1}\) $n$, we have that $b=an^{-1}$.
    As $N\leq A$, then $b = an^{-1} \in A$ and hence $bn \in (A\cap B)N$.
    Therefore $A = (A \cap B)N$. \end{proof}

\begin{proposition}\label{prop: inflation preserves saturated compatibility}
Let $\Oa,\Om$ be $G/N$-transfer systems such that $\Om$ is saturated. If $(\Oa,\Om)$ is a compatible pair, then $(p^*\Oa,p^*\O_m)$ is a compatible pair.
\end{proposition}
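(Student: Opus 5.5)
The plan is to verify the two conditions of \cref{def:compatible pair} directly for $(p^*\Oa,p^*\Om)$. The containment $p^*\Om\subseteq p^*\Oa$ follows from $\Om\subseteq\Oa$ by \cref{lem:inflation monotone}. For condition (2), fix $K,J\leq H\leq G$ with $K\to H$ in $p^*\Om$ and $K\cap J\to K$ in $p^*\Oa$. We must show $J\to H\in p^*\Oa$. By \cref{lem:in inflation iff restriction}, this amounts to two facts: $JN\to HN\in p^{-1}\Oa$, and $JN\cap H=J$.

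\emph{Step 1: push everything into the interval $[N,G]$.} By \cref{lem:in inflation iff restriction} we have $KN\to HN\in p^{-1}\Om$ and $KN\cap H=K$. We also have $(K\cap J)N\to KN\in p^{-1}\Oa$ and $(K\cap J)N\cap K=K\cap J$. Since $K\leq H$, the identity $KN\cap H=K$ gives $KH\cap KN=H\cap KN=K$. So \cref{lem: intersection of join implies distributive N} with $A=K$, $B=J$ needs $KJ\cap KN=K$. As $KJ\subseteq H$, we get $KJ\cap KN\subseteq H\cap KN=K$. The lemma then yields $(K\cap J)N=KN\cap JN$.

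We now work in $G/N$ via \cref{cor:p^* in the interval}. There, $KN/N\to HN/N$ lies in $\Om$ and $(KN\cap JN)/N\to KN/N$ lies in $\Oa$, with $JN/N\leq HN/N$. Compatibility of $(\Oa,\Om)$ gives $JN/N\to HN/N\in\Oa$, that is, $JN\to HN\in p^{-1}\Oa$.

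\emph{Step 2: show $JN\cap H=J$.} I expect this to be the main obstacle, and it is where saturation of $\Om$ enters. Note that $J\leq JN\cap H$, and $JN\cap H=J(N\cap H)$. So it suffices to show $N\cap H\leq J$. From $KN\cap H=K$ we get $N\cap H\leq K$.

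The idea is to use saturation to produce a transfer whose restriction condition forces this. Since $K\leq KJ'\leq H$, where we take the subgroup $L=K(N\cap H)$, which is in fact just $K$, this route alone is not enough. Instead, apply saturation to $KN\leq (K\cap J)N\cdot$\ldots. More concretely, I would consider the chain $K\leq \langle K, J\rangle\leq H$ inside $p^*\Om$. This transfer system is saturated by \cref{prop: inflation preserves saturation}, so $\langle K,J\rangle\to H\in p^*\Om$. \cref{lem:in inflation iff restriction} then gives $\langle K,J\rangle N\cap H=\langle K,J\rangle$. Hence $N\cap H\leq\langle K,J\rangle$.

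To descend from $\langle K,J\rangle$ to $J$, restrict the transfer $K\to H$ (now known to lie in $p^*\Om$) along $J$. This gives $K\cap J\to J$ in $p^*\Om\subseteq p^*\Oa$, so $(K\cap J)N\cap J=K\cap J$. I would then use $N\cap H\leq K$ together with the previous containment and the Step 1 identity $(K\cap J)N=KN\cap JN$. The aim is to show that an element $n\in N\cap H$ with $n\in JN$ automatically lies in $K\cap J$, via $n\in KN\cap JN=(K\cap J)N$ and $n\in H$.

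Concretely, take $h=jn\in JN\cap H$. Then $n=j^{-1}h\in N\cap H\leq K$. I will try to show that $n\in J$, by writing $n$ inside $(K\cap J)N\cap K$ and using the restriction identity $(K\cap J)N\cap K=K\cap J$. This holds provided $n\in(K\cap J)N$. That in turn follows from $n\in KN\cap JN$, since $n\in N$ lies in both. So $n\in K\cap J\subseteq J$, and $h\in J$.

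If this last chain works as written, saturation may only be needed to guarantee the restriction identities used above. In that case I would recheck where saturation is genuinely used, suspecting it is needed to ensure $K\cap J\to K$ restricts properly inside $p^{-1}$.
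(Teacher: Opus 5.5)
Your argument is correct, and in Step 2 it takes a different and shorter route than the paper.

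Step 1 matches the paper. The paper verifies $(K\cap J)N=KN\cap JN$ by a direct element argument rather than via \cref{lem: intersection of join implies distributive N}, but the point is the same.

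For the restriction condition, the paper does not check $JN\cap H=J$ directly. Its steps are:
\begin{itemize}
\item restrict $K\to H$ to $K\to\langle K,J\rangle$ (written $KJ$ there);
\item apply compatibility in $G/N$ a second time to get $JN\to\langle K,J\rangle N$;
\item run an element chase showing $\langle K,J\rangle\cap JN=J$, so that $J\to\langle K,J\rangle\in p^*\Oa$;
\item use saturation of $p^*\Om$ (\cref{prop: inflation preserves saturation}) to get $\langle K,J\rangle\to H$, and compose.
\end{itemize}

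Your final chain is the same element chase with $H$ in place of $\langle K,J\rangle$. You get $n\in N\cap H\le KN\cap H=K$, and then $n\in K\cap (K\cap J)N=K\cap J$. Both identities come straight from \cref{lem:in inflation iff restriction}, applied to $K\to H\in p^*\Om$ and to $K\cap J\to K\in p^*\Oa$.

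This settles your closing doubt: saturation is used nowhere, including for the restriction of $K\cap J\to K$. What your route buys is that the saturation hypothesis on $\Om$ is superfluous. Your proof gives \cref{prop: inflation preserves compatibility} directly for arbitrary $\Om$, without passing through \cref{prop: inflation preserves max compat}.

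When you write it up, make two cleanups:
\begin{itemize}
\item Delete the exploratory paragraph about $\langle K,J\rangle\to H$ and the restriction of $K\to H$ along $J$, including the garbled sentence about $K(N\cap H)$. The containment $N\cap H\le\langle K,J\rangle$ follows at once from $N\cap H\le K$ and is never used.
\item Note that $n\in (K\cap J)N$ holds simply because $n\in N$, so the Step 1 identity is not needed at that point either.
\end{itemize}
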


\begin{proof}
Assume $\Oa$ and $\Om$ are as above such that $(\Oa,\Om)$ is a compatible pair.
To check if $(p^*\Oa,p^*\O_m)$ is a compatible pair, 
we need to show $p^*\O_m\subseteq p^*\Oa$ and that there are no compatibility failures.
The former is immediate from \cref{lem:inflation monotone}.

Checking for compatibility failures amounts to checking that, for the left diagram of \cref{eq:compatibility failure?}, if $K \to H \in p^*\Om$ and $K \cap J \to K \in p^*\Oa$ then $J\to H$ must be in $p^*\Oa$. 
By \cref{lem:in inflation iff restriction} since $K \to H \in p^*\Om$ then $KN\cap H = K$. We can consider the analogous diagram on the interval $[N,G]$ (right).
\begin{equation}\label{eq:compatibility failure?}
\begin{tikzcd}[column sep = tiny, row sep = tiny]
&H&\\
K\ar[ur,"p^*\O_m"]&&J\arrow[ul,dashed]\\
&K\cap J\arrow[ur,"p^*\O_m",swap]\ar[ul,"p^*\Oa"]&
\end{tikzcd}
\quad
\xrightarrow{-\lor N}
\quad
\begin{tikzcd}[column sep = tiny, row sep = tiny]
&HN&\\
KN\ar[ur,"p^{-1}\O_m"]&& JN\arrow[ul,dashed]\\
&(K\cap J)N\arrow[ur,"p^{-1}\O_m",swap]\ar[ul,"p^{-1}\Oa"]&
\end{tikzcd}
\end{equation}

We will use the compatibility of $(\Oa,\O_m)$ and \cref{cor:p^* in the interval} to establish that $JN\to HN \in p^*\Oa$ . 
As is, the above diagram on the right is not a compatibility diagram as we need to prove that the bottom subgroup $(K \cap J)N$ is equivalent to $KN \cap JN$, then we will know that $JN \to HN \in p^{-1}\Oa$ by compatibility.
The containment $(K\cap J)N\leq KN\cap JN$ follows as $\cap$ is a meet of a lattice.
To show the reverse containment, let $k\in K$, $\ell\in J$, and $n_1,n_2\in N$ such that $kn_1=\ell n_2 \in KN \cap JN$.
 Then, multiplying by $n_2^{-1}$, we see $\ell=kn_1n_2^{-1} \in KN \cap J$.
As $J\leq H$, we have that $KN \cap J\leq KN \cap H=K$.
Hence, $\ell \in K \cap J$, and $\ell n_2\in (K\cap J)N$.
Therefore $(K\cap J)N=KN\cap JN$.
The compatibility of $(\Oa,\O_m)$ then implies the existence of the transfer $JN \to HN$.

We now show that the transfer $JN \to HN$ restricts to $J\to H$. 
Because $KJ \leq H$, the transfer $K \to H$ restricts to $K\to KJ$.
Applying \Cref{lem:in inflation iff restriction} we get that $KN \to KJN \in p^{-1}(\O)$, then repeating the argument from above, we get that $JN\to KJN\in p^{-1}\Oa$ by compatibility.
We will shortly prove that $JN \to KJN$ restricts into $J\to KJ \in p^*\Oa$, providing the diagram below:
$$
\begin{tikzcd}[column sep = small, row sep = small]
&H&\\
&KJ\ar[u,dashed]&\\
K\ar[uur,"p^*\O_m"] \ar[ur,"p^*\O_m",swap, near start] &&J\ar[ul,"p^*\Oa", near start] \ar[uul,dashed]\\
&K \cap J. \ar[ul,"p^*\Oa"]\ar[ur,"p^*\O_m",swap]&
\end{tikzcd}
$$
Given $\Om$ is assumed to be saturated, and inflation preserves saturation  (\cref{prop: inflation preserves saturation}), $p^*\O_m$ is saturated as well.
Thus, since $K \to H \in p^*\O_m$ then $KJ\to H \in p^*\O_m$.
We can then compose $KJ\to H$ with the claimed transfer $J\to KJ$ to obtain the desired transfer $J\to H \in p^*\Oa$.

To complete the proof, 
it is sufficient to show that $JN \to KJN$ restricts into $J\to KJ$, or equivalently that $KJ \cap JN = J$.
By inspection, $J\leq KJ \cap JN$.
To show that $KJ \cap JN \leq J$, let $m=\ell n\in KJ \cap JN$ where $\ell \in J$ and $n\in N$.
Multiplying by $\ell^{-1}$, we see that $n=\ell^{-1}m\in KJ\cap N$.
Noting that $KJ\cap N\leq KJ\cap KN$ and applying \cref{lem:in inflation iff restriction} to the transfer $K\to KJ$, we have that $KJ\cap KN=K$ and hence $n\in K\cap N$.
Mirroring the previous step, we note that $K\cap N\leq K\cap (K\cap J)N$.
Applying \cref{lem:in inflation iff restriction} once more, this time to the transfer $K\cap J\to K$, we yield that $K\cap (K\cap J)N=K\cap J$, and hence $n\in K\cap J$.
This establishes that $m=\ell n \in J$, and thus $KJ\cap JN = J$
as desired. \end{proof}

\begin{proposition}[\cref{thm:inflation}(4)]
\label{prop: inflation preserves max compat}
If $(\O,\O_m)$ is a maximal compatible pair, then so is $(p^*\O,p^*\O_m)$.
\end{proposition}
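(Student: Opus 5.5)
We need to show that if $\O_m = \mc{\O}$, then $p^*\O_m = \mc{p^*\O}$. The plan is to prove the two inclusions separately.

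\emph{First inclusion.} By \cref{prop:maximal compatible element is saturated}, $\O_m$ is saturated. Then \cref{prop: inflation preserves saturated compatibility} shows that $(p^*\O, p^*\O_m)$ is compatible. Hence $p^*\O_m \subseteq \mc{p^*\O}$ by \cref{prop:Maximal compatible characterises all compatibility}.

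\emph{Reverse inclusion.} Let $K\to H \in \mc{p^*\O}$. By \cref{lem:in inflation iff restriction} we have $KN\to HN\in p^{-1}\O$ and $K = KN\cap H$. Since $p^*\O_m = \Res(p^{-1}\O_m)$, it suffices to show $KN\to HN \in p^{-1}\O_m$, i.e.\ $KN/N \to HN/N \in \mc{\O}$. I would argue this in two steps.

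\textbf{Step 1: reduce to a transfer in the interval $[N,G]$.} Note that $\mc{p^*\O}$ is itself a transfer system compatible with $p^*\O$. I want to show $KN \to HN \in \mc{p^*\O}$. The transfer $K\to H$ lies in $\mc{p^*\O}$, and $N\cap H \le K$ because $K = KN\cap H$. Consider the compatibility square for $K\to H$ with $J = H\cap N$; this only gives transfers into $H$, so it does not by itself reach $KN$. Instead I would use \cref{prop:set expression of maximal compatible pair} and \cref{prop:computing Om via restricting edges} directly on $KN\to HN$.

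\textbf{Step 2: verify condition \eqref{eqn:M(O) formula} in $\O$.} Pick $N\le I< J\le HN$ with $KN\cap I\to KN\cap J\in p^{-1}\O$; we must show $I\to J\in p^{-1}\O$. Intersect with $H$. By \cref{lem:group product containment} and \cref{lem: intersection of join implies distributive N}, the subgroups $I' = I\cap H$ and $J' = J\cap H$ satisfy $I'N = I\cap HN = I$ and $J'N = J$ (using $N\le I,J\le HN$). One also computes $K\cap I' = KN\cap H\cap I = (KN\cap I)\cap H$, and likewise for $J'$. Hence $K\cap I'\to K\cap J'$ is the restriction of $KN\cap I\to KN\cap J$ along $H\cap(KN\cap J)$, and so lies in $p^*\O$ (it is a restriction of an element of $p^{-1}\O$; the condition needed by \cref{lem:in inflation iff restriction} follows from the modular identities above). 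Since $K\to H\in\mc{p^*\O}$, \eqref{eqn:M(O) formula} gives $I'\to J'\in p^*\O$. Then \cref{lem:in inflation iff restriction} yields $I'N\to J'N$, that is $I\to J$, in $p^{-1}\O$. By \cref{cor:p^* in the interval} this is a transfer in $\O$. Therefore $KN/N\to HN/N$ satisfies \eqref{eqn:M(O) formula} for $\O$, so it lies in $\mc{\O}=\O_m$, and $K\to H$ is its restriction, hence lies in $p^*\O_m$.

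\textbf{Main obstacle.} The key difficulty is the group-theoretic bookkeeping in Step 2. One must check that $K\cap I' \to K\cap J'$ genuinely lies in $p^*\O$, and that $I'N = I$ and $J'N = J$. The latter requires $I\le HN$ together with \cref{lem:group product containment}, applied with $A = I$ and $B = H$. For the former, I would verify $(K\cap J')N = KN\cap J$ via \cref{lem: intersection of join implies distributive N}, using $KN\cap H = K$.
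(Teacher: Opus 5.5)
Your proof is correct and follows the paper's argument. The first inclusion is identical. Your reverse inclusion is the direct (rather than contrapositive) form of the paper's: you intersect the interval witnesses $N\le I<J\le HN$ with $H$ and recover $I=(I\cap H)N$, $J=(J\cap H)N$ via \cref{lem:group product containment}, which is the same key move. Your abandoned Step 1 and the extra appeal to \cref{lem: intersection of join implies distributive N} are unnecessary, since $K\cap I'\to K\cap J'$ already lies in $p^*\O=\Res(p^{-1}\O)$ as a restriction of $KN\cap I\to KN\cap J$.
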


\begin{proof}

We seek to show that $\mc{p^*\O} = p^*{\mc{\O}}$.
We first consider the direction $p^*{\mc{\O}} \subseteq \mc{p^*\O}$.
Since $\mc{\O}$ is saturated and compatible with $\O$, we may apply \cref{prop: inflation preserves saturated compatibility} to deduce that $p^*{\mc{\O}}$ is compatible with $p^*{\O}$.
This implies by \cref{prop:Maximal compatible characterises all compatibility} that $p^*{\mc{\O}} \subseteq \mc{p^*\O}$.

To show the other direction, $\mc{p^*\O} \subseteq p^*{\mc{\O}}$, we will show that $p^*\mc{\O}^c$,
is contained in the complement $\mc{p^*\O}^c$ with respect to $p^*\O$.
Since $\mc{\O} \subseteq \O$ then $p^*\mc{\O} \subseteq p^*\O$, therefore we can consider $p^*\mc{\O}^c\coloneqq p^*\O \setminus p^*\mc{\O}$.
Let $K\to H$ be a transfer in $p^*\mc{\O}^c$, then we seek to show that $K\to H$ is in $\mc{p^*\O}^c$. 
As $K\to H\in p^*\O$, \cref{lem:in inflation iff restriction} implies that $KN\to HN\in p^{-1}\O$ and $KN\cap H=K$.
Similarly, as $K\to H \not \in p^*\mc{\O}$, \cref{lem:in inflation iff restriction}  implies that $KN\to HN\not \in p^{-1}\mc{\O}$.
\cref{cor:p^* in the interval} lets us apply the formula for $\mc{\O}^c$ to the interval $[N,G]$, to establish the existence of some $I < J$ in the interval $[N,HN]$ such that $I\to J$ is not in $p^{-1}\O$.
We illustrate this in the left diagram of \eqref{eqn:functoriality:complement}.
We then take intersections with $H$ and label the resulting transfers.
\begin{equation}\label{eqn:functoriality:complement}
\begin{tikzcd}[column sep = tiny, row sep = tiny]
	& HN &&&&& H \\
	{\textcolor{white}{J}KN\textcolor{white}{J}} && J & {} && {\textcolor{white}{J} K \textcolor{white}{J}} && {J \cap H} \\
	& {KN \cap J} && {\textcolor{white}{J} \, I \, \textcolor{white}{JJ}} &&& {K \cap J} && {I \cap H} \\
	&& {KN \cap I} &&&&& {K \cap I}
	\arrow["{p^{-1}\O}", from=2-1, to=1-2]
	\arrow["{- \cap H}", 
    from=2-4, to=2-6]
	\arrow["{p^*\O}", from=2-6, to=1-7]
	\arrow["{p^{-1}\O}", from=3-2, to=2-3]
	\arrow["{\not\exists p^{-1}\O}"', color={rgb,255:red,214;green,92;blue,92}, dashed, from=3-4, to=2-3]
	\arrow["a", from=3-7, to=2-8]
	\arrow["d"', dashed, from=3-9, to=2-8]
	\arrow["{p^{-1}\O}", from=4-3, to=3-2]
	\arrow["{p^{-1}\O}"', from=4-3, to=3-4]
	\arrow["b", from=4-8, to=3-7]
	\arrow["c"', from=4-8, to=3-9]
\end{tikzcd}
\end{equation}
We claim that $a,b,c$ are all in $p^*\O$ and $d$ is not.
The transfers $a$ and $c$ are each a restriction of the transfer $K\to H$.
Similarly, $b$ is a restriction of $KN\cap I\to KN \cap J$.
For $d$, we use \cref{lem:in inflation iff restriction} and seek to show that $(I\cap H)N \to (J\cap H)N$ is not in $p^{-1}\O$.
Applying \cref{lem:group product containment} twice reveals $(I\cap H)N \to (J\cap H)N$ is equal to $I\to J$.
Since $(I \cap H)N \to (J\cap H)N \notin p^{-1}\O$, then by \cref{lem:in inflation iff restriction} we have that $I\cap H \to J\cap H \notin p^*\O$.
Therefore $K\to H$ restricts into a compatibility failure, and $K\to H \in \mc{p^*\O}^c$.\end{proof}

This result then lets us extend \cref{prop: inflation preserves saturated compatibility} to the case when $\Om$ is not saturated.

\begin{corollary}[\cref{thm:inflation}(3)]   \label{prop: inflation preserves compatibility}

Let $N\trianglelefteq G$ be a normal subgroup. Let $\O,\O_m$ be two $G/N$-transfer systems, and $p:G\onto G/N$ the projection.
If $(\O,\O_m)$ is compatible, then so is $(p^*\O,p^*\O_m)$.
\end{corollary}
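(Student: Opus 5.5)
The plan is to reduce compatibility to containment in the maximal compatible transfer system, and then use the equality already proved for inflation. By \cref{prop:Maximal compatible characterises all compatibility}, the pair $(\O,\O_m)$ is compatible if and only if $\O_m \subseteq \mc{\O}$. Likewise, $(p^*\O,p^*\O_m)$ is compatible if and only if $p^*\O_m \subseteq \mc{p^*\O}$. So it suffices to show this containment.

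First, we apply monotonicity of inflation (\cref{lem:inflation monotone}) to $\O_m \subseteq \mc{\O}$. This gives $p^*\O_m \subseteq p^*\mc{\O}$.

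Second, we invoke \cref{prop: inflation preserves max compat}. Its proof establishes exactly the equality $\mc{p^*\O} = p^*\mc{\O}$. One direction comes from \cref{prop: inflation preserves saturated compatibility}, which applies because $\mc{\O}$ is saturated by \cref{prop:maximal compatible element is saturated}. The other direction is the complement argument. If one prefers to read the statement of \cref{prop: inflation preserves max compat} literally, apply it to the maximally compatible pair $(\O,\mc{\O})$. It says that $(p^*\O,p^*\mc{\O})$ is maximally compatible, which by the uniqueness in \cref{prop:existence of maximal compatible pair} means $p^*\mc{\O}=\mc{p^*\O}$.

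Chaining the two steps gives $p^*\O_m \subseteq p^*\mc{\O} = \mc{p^*\O}$. A final application of \cref{prop:Maximal compatible characterises all compatibility} then shows that $(p^*\O,p^*\O_m)$ is compatible.

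The only subtle point is making sure the identification $\mc{p^*\O}=p^*\mc{\O}$ is available without circularity. It is: the proof of \cref{prop: inflation preserves max compat} uses only the saturated case \cref{prop: inflation preserves saturated compatibility}, never the present corollary. There is no real obstacle here; the saturation hypothesis of \cref{prop: inflation preserves saturated compatibility} is removed by passing through $\mc{\O}$, which is always saturated.
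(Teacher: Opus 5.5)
Your proposal is correct and is essentially the paper's own proof: reduce to $\O_m\subseteq\mc{\O}$ via \cref{prop:Maximal compatible characterises all compatibility}, apply the monotone map $p^*$, and use the identity $p^*\mc{\O}=\mc{p^*\O}$ from \cref{prop: inflation preserves max compat}. Your non-circularity remark is also accurate, since that proposition relies only on the saturated case \cref{prop: inflation preserves saturated compatibility} and the complement formula, not on this corollary.
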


\begin{proof}
Recall that $(\O,\Om)$ is compatible if, and only if, $\O_m \subseteq \mc{\O}$ (\cref{prop:Maximal compatible characterises all compatibility}).
Thus as $p^*$ is monotone (\cref{lem:inflation monotone}), and  preserves maximal compatibility (\cref{prop: inflation preserves max compat}), this implies that $p^*\O_m\subseteq p^*\mc{\O}= \mc{p^*\O}$.
Which implies $(p^*\O, p^*\O_m)$ is compatible.
\end{proof}

\subsection{Consequence and Counterexamples}\label{ssec:misc inflation} 

In this section we discuss a powerful consequence of \cref{thm:inflation}, namely \cref{cor:universal-transfer}. This uses the right adjoint to the inflation functor, the \(N\)-fixed points.

\begin{definition}[{\cite[Section 5.1]{RubinOperadicLifts}, \cite[Appendix B]{BHOperads}}]
\label{def:pushforward}
If \(\O\) is a \(G\)-transfer system, let $\O^N$ be the $G/N$-transfer system given by restricting \(\O\) to \(\Sub(G/N)\subseteq \Sub(G)\).
\end{definition}

Rubin showed that the \(N\)-fixed points is right adjoint to the pull back \cite{RubinOperadicLifts}, so for any normal subgroup \(N\), we always have a map of \(G\)-transfer systems
\[
 p^\ast \O^N\to \O,
\]
and when we look at the part of \(\Sub(G)\) coming from \(\Sub(G/N)\), this map is just the identity.

Recall now that for any \(\O\), there is a minimal subgroup \(\NO\) which transfers to \(G\). Minimality guarantees also that \(\NO\) is normal.

\begin{lemma}\label{lem:no universal transfer means image of unit}
For a disklike $G$-transfer system $\O$, we have 
\[
p^\ast\O^N\cong \O.
\]
\end{lemma}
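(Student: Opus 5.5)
The statement should be read with \(N=\NO\), the minimal subgroup transferring to \(G\). It is normal: \(\NO\to G\in\O\) implies \(g\NO g^{-1}\to G\in\O\) by conjugation, and \(\NO\cap g\NO g^{-1}\to G\in\O\) by \cref{lem:disk-like generators closed under intersection}, so minimality forces \(g\NO g^{-1}=\NO\). I would prove the two inclusions \(p^*\O^N\subseteq\O\) and \(\O\subseteq p^*\O^N\) directly, using the description \(p^*\O^N=\Res(p^{-1}\O^N)\) from \cref{lem:in inflation iff restriction}.

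For \(p^*\O^N\subseteq\O\): by definition \(p^{-1}\O^N\) consists exactly of the transfers of \(\O\) between subgroups containing \(N\). So every transfer in \(p^{-1}\O^N\) lies in \(\O\), and since \(\O\) is closed under restriction, so does every element of \(\Res(p^{-1}\O^N)\). This direction is formal and is just the counit of the adjunction.

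For \(\O\subseteq p^*\O^N\): let \(K\to H\in\O\). By \cref{prop:disklikecharacterization} there is a disklike generator \(L\to G\in\O\) with \(K=L\cap H\). Apply \cref{lem:disk-like generators closed under intersection} to \(L\to G\) and \(N\to G\) to get \(L\cap N\to G\in\O\). Minimality of \(N\) then gives \(L\cap N=N\), that is, \(N\le L\). Hence \(L\to G\) is a transfer of \(\O\) between subgroups containing \(N\), so it lies in \(p^{-1}\O^N\). Its restriction to \(H\le G\) is \(L\cap H\to H\), which is \(K\to H\), so \(K\to H\in\Res(p^{-1}\O^N)=p^*\O^N\).

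The only real content is the observation that every disklike generator contains \(\NO\), which follows from closure under intersection together with minimality. I expect no serious obstacle beyond fixing the choice \(N=\NO\) and checking that it is normal, so that the quotient \(p\colon G\onto G/N\) makes sense.
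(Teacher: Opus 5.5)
Your proof is correct and follows the paper's argument: both rest on the fact that every disklike generator $L\to G$ contains $\NO$, which you justify explicitly via \cref{lem:disk-like generators closed under intersection} and minimality (and you also check normality), so these generators lie in $p^{-1}\O^N$, while $p^*\O^N\subseteq\O$ is the counit inclusion. The only cosmetic difference is that you derive $\O\subseteq p^*\O^N$ from \cref{prop:disklikecharacterization} and $p^*\O^N=\Res(p^{-1}\O^N)$, whereas the paper simply observes that $\O$ is generated by these transfers and that $p^*\O^N$ is a transfer system containing them.
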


\begin{proof}
Since $\O$ is disklike, it is generated by transfers of the form $H\to G$. The minimality of \(\NO\) shows that any such \(H\) contains \(\NO\), so all of the generating transfers already sit in the image of \(\Sub(G/N)\) in \(\Sub(G)\).
\end{proof}

A consequence of \cref{thm:inflation}(4) and \Cref{lem:in inflation iff restriction} is that 
\[
\mc{p^*\O^N} = p^*\mc{\O^N}=\Res(p^{-1}\mc{\O^N}).
\]
Combining these with \cref{lem:no universal transfer means image of unit} gives a significant simplification in the computation of the maximal compatible transfer system for a disklike transfer system.

\begin{corollary}\label{cor:universal-transfer}
Let $\O$ be a disklike $G$-transfer system and let $p:G\to G/N_\O$ be the associated quotient map.
Then, the computation of $\mc{\O}$ reduces to the computation of $\mc{\O^N}$ in $\sub(G/N_\O)$:
\[
\mc{\O} = \Res(p^{-1}\mc{\O^N}).
\]
\end{corollary}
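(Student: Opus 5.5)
The plan is to chain three earlier results, with $N = N_\O$ throughout. First, since $\O$ is disklike, \cref{lem:no universal transfer means image of unit} gives $\O = p^*\O^{N}$. Under this identification, maximal compatibility transports along the equality, so
\[
\mc{\O} = \mc{p^*\O^N}.
\]

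Second, I apply \cref{thm:inflation}(4), that is, \cref{prop: inflation preserves max compat}, to the maximally compatible pair $(\O^N, \mc{\O^N})$ of $G/N$-transfer systems. This yields $\mc{p^*\O^N} = p^*\mc{\O^N}$. Third, \cref{lem:in inflation iff restriction}, applied to the $G/N$-transfer system $\mc{\O^N}$, identifies the inflation as $p^*\mc{\O^N} = \Res(p^{-1}\mc{\O^N})$. Concatenating the three equalities gives the claimed formula.

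The main point needing care is the first step, namely that $N_\O$ is a legitimate normal subgroup and that $p^*\O^N$ really equals $\O$, not merely maps into it.

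For normality and well-definedness I would argue as follows. By \cref{lem:disk-like generators closed under intersection}, the subgroups $H$ with $H\to G\in\O$ are closed under intersection. This gives a unique minimal such subgroup $N_\O$. By conjugation invariance (conjugates of $H\to G$ are $gHg^{-1}\to G$), uniqueness forces $N_\O$ to be normal.

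For the equality $p^*\O^N = \O$, the counit $p^*\O^N\to\O$ gives the inclusion $p^*\O^N\subseteq\O$. For the reverse inclusion I use \cref{prop:disklikecharacterization}: every transfer of $\O$ is a restriction of some $H\to G\in\O$. Each such $H$ contains $N_\O$, since $H\cap N_\O = N_\O$ by minimality. So $H\to G$ lies in $p^{-1}\O^N$, and its restrictions lie in $\Res(p^{-1}\O^N) = p^*\O^N$.

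Everything after this is a direct citation of earlier results, so I expect no further obstacle.
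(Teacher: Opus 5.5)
Your proposal is correct and is essentially the paper's argument: the paper also chains $\O = p^*\O^{N}$ (\cref{lem:no universal transfer means image of unit}), then $\mc{p^*\O^N}=p^*\mc{\O^N}$ (\cref{prop: inflation preserves max compat}), then $p^*\mc{\O^N}=\Res(p^{-1}\mc{\O^N})$ (\cref{lem:in inflation iff restriction}). Your justification of the normality of $N_\O$ and of both inclusions in $p^*\O^N=\O$ only spells out details that the paper states briefly around \cref{lem:no universal transfer means image of unit}.
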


This simplification is powerful. 
For example, we can use this corollary to simplify the computation of the maximal compatible transfer system for the $C_{q^3r^3}$-transfer system of \cref{ex:inflation example}. 
We observe that $\NO = C_{qr}$, and that our computation reduces to the $C_{q^2r^2}$-transfer system of \cref{ex:algorithm example}.
In general, the smaller the interval $[N_\O,G]$ is in comparison to all of $\sub(G)$, the more effective this simplification is.

\begin{example}
\label{ex:universal transfer corollary not true in general}

The naive analogue of \cref{cor:universal-transfer} does not hold in general for categorical transfer systems.
Consider the lattice $P$ and transfer system $\O$ in  \cref{ex:categorical disk-like transfer system is not good}, which we reproduce here with additional labeling.
Recall that the bold blue transfer, $r'$, is the only non-trivial transfer in $\mc{\O}$. 
\[
\begin{tikzcd}[row sep = tiny, column sep = tiny, every matrix/.append style= {name=r}, remember picture]
	&& \top \\
	& B && A \\
	C \\
	& \bot
	\arrow[dotted, no head, from=2-2, to=1-3]
	\arrow["e"', from=2-4, to=1-3]
	\arrow[dotted, no head, from=3-1, to=2-2]
	\arrow["r", from=4-2, to=2-2]
	\arrow[dotted, no head, from=4-2, to=2-4]
	\arrow["{r'}", from=4-2, to=3-1, very thick, cblue]
\end{tikzcd}
   \begin{tikzpicture}[remember picture, overlay]
   \myroundpoly[ao,very thick, dashed]{r-2-4,r-1-3}{0.45cm};
   \end{tikzpicture}\]
The only disklike generator is $e$, therefore $N_\O = A$.
If the naive analogue of \cref{cor:universal-transfer} held, we should be able to compute $\mc{\O}$ by restricting our focus to the interval $[A,\top]$ (circled by a green dashed line), then applying $\Res$.
 The restriction of $\O$ to that interval is the complete transfer system on $[A,\top]$ (with unique nonreflexive transfer $e$), and is hence self-compatible.
 However, in $P$, $e$ restricts into the saturation failure formed by $r$ and $r'$ and hence $e\notin \mc{\O}$.
 The question of when an analogue of \cref{cor:universal-transfer} holds in the categorical setting remains open.
\end{example}

\bibliographystyle{amsalpha}
\let\oldaddcontentsline\addcontentsline
\renewcommand{\addcontentsline}[3]{}
\bibliography{bibby.bib}
\let\addcontentsline\oldaddcontentsline

\end{document}